\newcommand{\map}[1]{\xrightarrow{#1}}
\begin{document}

\author{Benjamin Howard}
\thanks{This research was  supported by an NSF 
postdoctoral fellowship.}
\address{Dept. of Mathematics, Harvard University, 1 Oxford St., Cambridge, MA. 02138.}

\title{Special cohomology classes for modular Galois representations}

\maketitle

%\subjclass[2000]{11F80, 11G18}

\begin{abstract}
Building on ideas of Vatsal \cite{vatsal02}, Cornut \cite{cornut02} proved 
a conjecture of Mazur asserting the generic nonvanishing of Heegner
points on an elliptic curve $E_{/\mathbb{Q}}$ as one ascends the 
anticyclotomic  $\mathbb{Z}_p$-extension of a quadratic imaginary 
extension $K/\mathbb{Q}$.
In the present article Cornut's result is extended by replacing the elliptic
curve $E$ with the Galois cohomology of Deligne's $2$-dimensional 
$\ell$-adic representation attached to a modular form of weight $2k>2$,
and replacing the family of Heegner points with an analogous family of special 
cohomology classes.
\end{abstract}

\theoremstyle{plain} 

\newtheorem{Thm}{Theorem}[subsection]
\newtheorem{Prop}[Thm]{Proposition} 
\newtheorem{Lem}[Thm]{Lemma}
\newtheorem{Cor}[Thm]{Corollary} 
\newtheorem{BigTheorem}{Theorem}

\theoremstyle{definition} 
\newtheorem{Def}[Thm]{Definition}
\newtheorem{Hyp}[Thm]{Hypothesis} 
\newtheorem{Con}[Thm]{Construction}

\theoremstyle{remark}
\newtheorem{Rem}[Thm]{Remark}
\newtheorem{Ques}[Thm]{Question}

\renewcommand{\labelenumi}{(\alph{enumi})}
\renewcommand{\theBigTheorem}{\Alph{BigTheorem}}
\setcounter{section}{-1}

%%%%%%%%%%%%%%%%%%%%%%%%%%%%%%%%%%%%%%%%%%%%%%%%%%%%%%%%%%%%%%%%%%%%%

\section{Introduction}

%%%%%%%%%%%%%%%%%%%%%%%%%%%%%%%%%%%%%%%%%%%%%%%%%%%%%%%%%%%%%%%%%%%%%

\subsection{Statement of the main result}
\label{intro}

Let $f\in S_{2k}(\Gamma_0(N),\mathbb{C})$ be a normalized newform of weight
$2k> 2$ and level $N\ge 4$.  Fix
a rational prime $\ell$ and embeddings of algebraic closures 
$\mathbb{Q}^\mathrm{al}\hookrightarrow \mathbb{Q}_\ell^\mathrm{al}$, 
$\mathbb{Q}^\mathrm{al}\hookrightarrow \mathbb{C}$.
Let $\Phi\subset\mathbb{Q}_\ell^\mathrm{al}$ be a 
finite extension of $\mathbb{Q}_\ell$
containing all Fourier coefficients of $f$ and let 
$W_f$ be the $2$-dimensional $\Phi$ vector space with 
$\mathrm{Gal}(\mathbb{Q}^\mathrm{al}/\mathbb{Q})$-action 
constructed by Deligne \cite{deligne},
so that the geometric Frobenius
of a prime $q\nmid \ell N$ acts on $W_f$ with characteristic polynomial
$X^2-a_q(f)X+q^{2k-1}$.  Let $K$ be a quadratic imaginary field
satisfying the \emph{Heegner hypothesis} that all prime divisors of 
$N$ are split in $K$, fix a prime $p\nmid N\cdot \mathrm{disc}(K)$,
let $H[p^s]$ denote the ring class field of $K$ of conductor $p^s$,
set $H[p^\infty]=\cup_s H[p^s]$, and define 
$\mathcal{G}=\mathrm{Gal}(H[p^\infty]/K)$.  
The torsion subgroup 
$G_0\subset \mathcal{G}$ satisfies $\mathcal{G}/G_0\cong\mathbb{Z}_p$.

In \S \ref{S: Heegner cohomology} we define for every $s\ge 0$
a subspace
\begin{displaymath}
\mathrm{Heeg}_s(f)\subset H^1(H[p^s],W_f(k)).
\end{displaymath}
This subspace is the higher weight analogue of the subspace generated 
by the Kummer images of Heegner points in the case $k=1$, in which case
\begin{displaymath}
W_f(1)\cong \mathrm{Ta}_\ell(A_f)\otimes\mathbb{Q}_\ell
\end{displaymath} 
for $A_f$  the modular abelian
variety attached to $f$ by Eichler-Shimura theory.  Such higher
weight Heegner objects have been studied earlier by
Brylinski \cite{bryl}, Nekov{\'a}{\v{r}} \cite{nek:euler,nek:gz},
and Zhang \cite{zhang97}, and our construction of $\mathrm{Heeg}_s(f)$
follows  Nekov{\'a}{\v{r}}'s \cite{nek:gz} construction very closely.
The main result (Theorem \ref{big money}) extends the results of
Cornut \cite{cornut02} and Vatsal \cite{vatsal02}
from the case $k=1$, and is as follows:

\begin{BigTheorem}
\label{BT}
Fix a character $\chi: G_0\map{}\Phi^\times$ and let
$$
\pi_\chi=\sum_{\sigma\in G_0}\chi(\sigma)\sigma
\in \Phi[G_0].
$$
Suppose $\ell\nmid p\cdot N\cdot \varphi(N)\cdot \mathrm{disc}(K)\cdot (2k-2)!$
($\varphi$ is Euler's function).  
As $s\to\infty$ the $\Phi$-dimension
of $\pi_\chi\mathrm{Heeg}_s(f)$ grows without bound.
\end{BigTheorem}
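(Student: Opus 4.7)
The plan is to adapt the strategy of Vatsal and Cornut for the weight-two case via three reductions: first to a mod-$\ell$ assertion, then to a geometric question about CM points on a Kuga-Sato variety, and finally to an equidistribution statement about Gross points on a definite quaternion algebra.

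First I would fix a $\mathrm{Gal}(\mathbb{Q}^\mathrm{al}/\mathbb{Q})$-stable $\mathcal{O}_\Phi$-lattice $T_f\subset W_f$ and pass to its reduction $\bar{T}_f$ modulo a uniformizer of $\Phi$. Under the hypothesis $\ell\nmid N\cdot\varphi(N)\cdot\mathrm{disc}(K)$, standard results in the style of Ribet show that $\bar{T}_f$ restricted to $\mathrm{Gal}(\mathbb{Q}^\mathrm{al}/H[p^\infty])$ is absolutely irreducible with trivial invariants, so the reduction maps $H^1(H[p^s],T_f(k))\to H^1(H[p^s],\bar{T}_f(k))$ are injective (uniformly in $s$). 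It therefore suffices to prove that the $\mathbb{F}_\ell$-dimension of the $\pi_\chi$-component of the reduced $\mathrm{Heeg}_s(f)$ is unbounded.

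Second, by Nekov\'a\v{r}'s construction the classes generating $\mathrm{Heeg}_s(f)$ arise as \'etale Abel-Jacobi images of Heegner cycles $Z_s$ on a Kuga-Sato variety $\mathcal{Y}_s$ fibered over a modular curve $X_s$, lying over a CM point $x_s\in X_s(H[p^s])$. The $\mathrm{Gal}(H[p^s]/K)$-action on these cycles is compatible with the Galois action on the underlying CM points, so the assertion $\pi_\chi Z_s\equiv 0\pmod{\ell}$ translates into a $\chi$-weighted linear relation among the mod-$\ell$ pairings of the Galois translates $\sigma x_s$ against polynomial-valued test data of degree $2k-2$ extracted from the symmetric-power structure of $W_f$. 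The hypothesis $\ell\nmid (2k-2)!$ enters precisely to keep this symmetric power semisimple modulo $\ell$ and to preserve non-degeneracy of the relevant pairings.

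Third, I would choose an auxiliary rational prime $\ell_0\nmid \ell p N\cdot\mathrm{disc}(K)$ inert in $K/\mathbb{Q}$ and reduce modulo a place of $\mathbb{Q}^\mathrm{al}$ above $\ell_0$. By Deuring, the CM points $\sigma x_s$ all specialize to supersingular points on $X_s$, and via Jacquet-Langlands together with Eichler's theorem the space of mod-$\ell$ test data gets identified with a space of $\mathbb{F}_\ell$-valued automorphic forms of weight $2k$ on the definite quaternion algebra $B/\mathbb{Q}$ ramified precisely at $\ell_0$ and $\infty$. Cornut's equidistribution theorem for Galois orbits of Gross points in $B^\times\backslash\widehat{B}^\times/\widehat{R}^\times$ then forces the $\chi$-twisted combinations to span a space of unbounded dimension as $s\to\infty$. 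The main obstacle is this third step: Cornut's result is a statement about point-level equidistribution on a double coset space, and one must check that it remains strong enough to disperse the orbits $\{\sigma x_s\}$ against the higher-weight polynomial test data rather than merely against scalar-valued functions. The hypothesis $\ell\nmid (2k-2)!$, together with careful tracking of the Galois-equivariance of the symmetric-power bundle on $\mathcal{Y}_s$, is what should permit this upgrade from the point-level Cornut theorem to the required higher-weight assertion.
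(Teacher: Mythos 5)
Your overall scaffolding (pass to a mod-$\ell$ statement, reduce the CM points at auxiliary primes inert in $K$ to land on supersingular points, and invoke the Vatsal--Cornut equidistribution machinery) is indeed the paper's strategy, but the proposal has a genuine gap: it explicitly defers the one step that is actually new in weight $2k>2$, namely the upgrade from point-level equidistribution to the $\mathrm{Sym}^{2k-2}$-valued setting, and offers no mechanism for carrying it out. The paper resolves this by attaching to each CM point an explicit augmentation $\Theta_g\in(\mathrm{Sym}^{2k-2}E_g[m])(1-k)$ built from $\sqrt{D}$, and then (Proposition \ref{first action}, Corollary \ref{second action}) computing exactly how $\Theta_g^\sigma$ transforms under reduction at $\mathfrak{q}$ in terms of the quaternionic uniformization; the key point is that the $S$-arithmetic group $\Gamma_\mathfrak{q}=R[1/p]^\times$ acts on $\mathcal{A}_\Lambda(\mathrm{red}_\mathfrak{q}(E_1))\cong\mathrm{Sym}^{2k-2}\Lambda^2$ irreducibly (Lemma \ref{irreducible}, via strong approximation), which is where $\ell>2k-2$ enters --- not, as you suggest, to keep a pairing nondegenerate. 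Combining this irreducibility with Ratner-type surjectivity on the tree is what disperses the augmentations, and the passage from the resulting geometric surjectivity to surjectivity onto $H^1(\mathbb{F}_\mathfrak{q},W_\Lambda(k))$ is not done by Jacquet--Langlands/Eichler as you propose, but by a higher-weight version of Ihara's theorem together with Poincar\'e duality (Proposition \ref{big ihara}, Corollary \ref{Cor:Ihara application}). Without some substitute for these two inputs your third step does not close.

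Two further points. First, your plan omits the genus-theory obstruction entirely: the lifts of $G_0$ to $\mathrm{Gal}(\mathbb{Q}^\mathrm{al}/K)$ do not form a chaotic set, because elements differing by Frobenii of primes ramified in $K$ cannot be separated by Ratner's theorem. The paper splits $G_0$ along the $2$-torsion subgroup $G_1$ generated by these Frobenii, introduces auxiliary $\Gamma_0(M)$-level with $M\mid\mathrm{disc}(K)$, and replaces the action of $\sigma_d\in G_1$ by degeneracy maps which are then shown to be jointly surjective on supersingular augmented cycles by a Ribet-style argument (Proposition \ref{degen appendix}); this is the actual role of the hypothesis $\ell\nmid\mathrm{disc}(K)$. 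Second, your first step's justification is off: the theorem assumes no residual irreducibility, and the paper never uses it; the passage from unbounded mod-$\mathfrak{m}$ dimension back to unbounded $\Phi$-dimension is instead handled by bounding the $R$-torsion of $H^1(\mathbb{Q}^\mathrm{al}/H[p^s],T)$ uniformly in $s$ using the Weil bounds at an inert prime (Lemma \ref{finite torsion}).
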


Let $X(N)_{/\mathbb{Q}}$ be the usual (geometrically disconnected) 
moduli space of
generalized elliptic curves over $\mathbb{Q}$ with full level $N$ 
structure, and
let $\mathcal{V}_{/\mathbb{Q}}\map{}X(N)_{/\mathbb{Q}}$ 
be the Kuga-Sato variety considered in 
\cite{scholl2,scholl1}.  Thus $\mathcal{V}_{/\mathbb{Q}}$ is a 
desingularization of the 
$(2k-2)$-fold fiber product over $X(N)_{/\mathbb{Q}}$ 
of the universal generalized elliptic curve. 
By work of Scholl \cite{scholl1}, Deligne's
$\ell$-adic representation $W_f$ occurs as a summand 
of $H^{2k-1}(\mathcal{V}_{/\mathbb{Q}^\mathrm{al}},\mathbb{Q}_\ell)$.
Combining this with the $\ell$-adic Abel-Jacobi map of \cite{nek:AJ} 
yields a map \cite[\S 0.3]{nek:gz}
\begin{displaymath}
\Psi_f:\mathrm{CH}^k_0(\mathcal{V}_{/F}) \map{}
H^{2k}(\mathcal{V}_{/F},\mathbb{Q}_\ell(k))  \map{} H^1(F, W_f(k))
\end{displaymath}
for any number field $F$, where $\mathrm{CH}_0^k$ denotes the Chow group of 
homologically trivial cycles of codimension $k$, modulo rational equivalence.
 Nekov{\'a}{\v{r}} \cite{nek:gz,nek:AJ} shows that the image of 
$\Psi_f$ is contained
in the Bloch-Kato Selmer group 
\begin{displaymath}
\mathrm{Sel}(F,W_f(k))\subset H^1(F,W_f(k)).
\end{displaymath}
Taking $F=H[p^s]$, the subspace 
$\mathrm{Heeg}_s(f)$ lies in the image of $\Psi_f$.  

As in \cite{vatsal02} we may write $H[p^\infty]$ as the compositum
of linearly disjoint (over $K$) fields $F$ and $K_\infty$ where $F/K$
is tamely ramified at $p$ with Galois group $G_0$,
 and $K_\infty/K$ is the anticylotomic $\mathbb{Z}_p$-extension.
By Theorem \ref{BT} (and under the hypotheses of that theorem),
the dimension of the $\chi$-component of  
$\mathrm{Sel}(\mathbb{Q}^\mathrm{al}/H[p^s],W_f(k))$
 grows without bound.
This provides some evidence for the standard conjecture predicting 
that for each character $\chi$ of $G_0$
\begin{equation}\label{BK}
\mathrm{dim}_\Phi\ \pi_\chi \mathrm{Sel}(H[p^s],W_f(k))=
\mathrm{ord}_{s=k}
\prod_{\psi}L(f\otimes K,\chi^{-1}\psi,s)
\end{equation}
where the product is over all characters $\psi$ of 
$\mathrm{Gal}(K_\infty/K)$ of conductor $\le p^s$
and $L(f\otimes K,\chi^{-1}\psi,s)$ is the twisted $L$-function
defined as in \cite[\S 0.5]{nek:gz}.  Indeed, the Heegner 
hypothesis and the functional
equation force $L(f\otimes K,\chi^{-1}\psi,k)=0$ for each such $\psi$, 
and so the right hand is $\ge p^s$.
One might hope to extend Kolyvagin's theory of Euler systems so as
to prove that the left hand side is $p^s+O(1)$.
Work of Nekov{\'a}{\v{r}} \cite{nek:euler} and of
Bertolini and Darmon \cite{bertdar} give evidence that this is accessible.

It is conjectured that the kernel of $\Psi_f$ is independent of
the choice of prime $\ell$.  A proof would allow one to remove 
the undesirable hypothesis that $\ell\not=p$ in Theorem \ref{BT},
leading to higher weight generalizations of the Iwasawa theoretic
results of \cite{bert,me,pr}.  It seems difficult to adapt the
methods of the present article to treat the (most interesting)
case $\ell=p$; instead
that case is treated in the
forthcoming work \cite{me:hida} using a completely different construction of 
Heegner cohomology classes in $H^1(H[p^s], W_f(k))$.
The constructions and results of 
\cite{me:hida} hold only for $\ell=p$ and $f$ ordinary at $p$, 
but allow modular forms of odd weight (which seem
inaccessible using the methods of the present article).

Zhang \cite{zhang97} has proved a higher weight
form of the Gross-Zagier theorem relating the height pairings
of certain Heegner cycles in $\mathrm{CH}^k_0(\mathcal{V}_{/H[1]})$ to the 
derivatives $L'(f\otimes K,\chi^{-1}\psi,k)$ for characters $\psi$ 
of trivial conductor.
The images of these Heegner cycles under $\Psi_f$ generate
our $\mathrm{Heeg}_0(f)$, and thus Theorem \ref{BT} would 
yield nonvanishing results for $L'(f\otimes K,\chi^{-1}\psi,k)$
if Zhang's formula were extended to ramified characters, and (harder) if
one knew the nondegeneracy of the height pairing on the Chow group 
$\mathrm{CH}_0^k(\mathcal{V})$.

%%%%%%%%%%%%%%%%%%%%%%%%%%%%%%%%%%%%%%%%%%%%%%%%%%%%%%%%%%%%%%%%%%%%

\subsection{Notation and conventions}
\label{notation}

%%%%%%%%%%%%%%%%%%%%%%%%%%%%%%%%%%%%%%%%%%%%%%%%%%%%%%%%%%%%%%%%%%%%

Throughout this article we use $k$, $N$, and $M$ to denote 
positive integers with $k> 1$, $N\ge 4$, $M$ squarefree, and $(M,N)=1$. 
We will be ultimately be concerned with the case $M=1$, but must allow more
general $M$ for technical reasons ($M$ will eventually be a 
divisor of $\mathrm{disc}(K)$). We frequently abbreviate $\mathbf{N}=NM$.
The letters $\ell$ and  $p$ denote rational primes with 
$(\ell p, \mathbf{N})=1$.  
We allow $\ell=p$ unless stated otherwise (more precisely,
we allow $\ell=p$ except in Sections \ref{S: reduction of the family}
and \ref{S: money}).

The letter $\Lambda$ always denotes a $\mathbb{Z}_\ell$-algebra.
If $S$ is a scheme on which $\mathbf{N}$ is invertible we let 
$Y_0(\mathbf{N})_{/S}$ (resp. $Y_1(N,M)_{/S}$)
be the coarse (resp. fine) moduli space of elliptic curves with
$\Gamma_0(\mathbf{N})$ level structure (resp. 
$\Gamma_1(N,M)=\Gamma_1(N)\cap \Gamma_0(M)$
level structure).
If $M=1$ we omit it from the notation, and we sometimes omit $S$
if it is clear from the context.  
For a congruence subgroup $\Gamma\subset\mathrm{SL}_2(\mathbb{Z})$ we
will sometimes refer to a pair $(E,x)$ consisting of 
an elliptic curve $E$ together with a $\Gamma$ level
structure $x$ on $E$ simply as a $\Gamma$ structure. 
Set $\Delta=(\mathbb{Z}/N\mathbb{Z})^\times$ and let 
$\varphi$ be Euler's function.

%%%%%%%%%%%%%%%%%%%%%%%%%%%%%%%%%%%%%%%%%%%%%%%%%%%%%%%%%%%%%%%%%%
%%%%%%%%%%%%%%%%%%%%%%%%%%%%%%%%%%%%%%%%%%%%%%%%%%%%%%%%%%%%%%%%%%

\section{Augmented elliptic curves}
\label{S: kummer}

%%%%%%%%%%%%%%%%%%%%%%%%%%%%%%%%%%%%%%%%%%%%%%%%%%%%%%%%%%%%%%%%%%
%%%%%%%%%%%%%%%%%%%%%%%%%%%%%%%%%%%%%%%%%%%%%%%%%%%%%%%%%%%%%%%%%%

Throughout \S \ref{S: kummer},
$\Lambda=\mathbb{Z}/m\mathbb{Z}$ for some fixed $\ell$-power $m$, and 
$S=\mathrm{Spec}(F)$ for $F$ a perfect field of characteristic prime to 
$\ell \mathbf{N}$ with algebraic closure $F^\mathrm{al}$.  

\subsection{Sheaves}
\label{sheaves}

If $L/F$ is an algebraic extension,
let $\pi^\mathrm{univ}:E^\mathrm{univ}\map{}Y_1(N)_{/L}$ 
be the universal elliptic curve, and define
a locally constant constructible sheaf on $Y_1(N)_{/L}$
\begin{equation}\label{sheaf def}
\mathcal{L}_{\Lambda}=\mathrm{Sym}^{2k-2}
(R^1\pi_*^\mathrm{univ} \underline{\Lambda}).
\end{equation}
The formation of this sheaf is compatible with
base change in $L$, by the proper base change theorem.
There are isomorphisms of \'etale sheaves on $Y_1(N)_{/L}$
\begin{displaymath}
R^1\pi_*^\mathrm{univ}\underline{\mu}_m\cong 
\underline{\mathrm{Hom}}(\underline{E}^\mathrm{univ}[m],\underline{\mu}_m)
\cong \underline{E}^\mathrm{univ}[m]
\end{displaymath}
where $\underline{E}^\mathrm{univ}[m]$ is the \'etale sheaf on $Y_1(N)_{/L}$
associated to the group scheme $E^\mathrm{univ}[m]$ and 
$\underline{\mathrm{Hom}}$
is sheaf $\mathrm{Hom}$.
Taking symmetric powers, there is a canonical isomorphism 
\begin{equation}\label{nice sheaf iso}
\mathcal{L}_\Lambda(2k-2)\cong \mathrm{Sym}^{2k-2}
(\underline{E}^\mathrm{univ}[m]).
\end{equation}  

If we let $Y_{/L}$ be a connected component of the open modular curve
parameterizing elliptic curves over $L$
with $\Gamma_1(N)\cap \Gamma(m)$ level structure
and fix a geometric point 
$\bar{z}\map{}Y_1(N)_{/L}$,
then the forgetful covering map $Y_{/L}\map{}Y_1(N)_{/L}$ 
cuts out a quotient of the  fundamental group
$\pi_1=\pi_1(Y_1(N)_{/L},\bar{z})$.
The group of $Y_{/L}$-valued $m$-torsion points of the 
universal elliptic curve over $Y_{/L}$ is canonically isomorphic to
$\Lambda^2$ (via the universal $\Gamma(m)$ level structure), and
the action of $\pi_1$ on this group identifies the aforementioned quotient 
with a subgroup of $\mathrm{GL}_2(\Lambda)$
containing $\mathrm{SL}_2(\Lambda)$.  We thus obtain an action 
of $\pi_1$ on $\Lambda^2$ and so also on $\mathrm{Sym}^{2k-2}\Lambda^2$.
It is immediate from (\ref{nice sheaf iso}) that the 
locally constant sheaf associated to this action is isomorphic to
$\mathcal{L}_\Lambda(2k-2)$. From the discussion following
\cite[\S 2 Lemma 2]{diamond-taylor}
we see that there is a  perfect symmetric pairing of \'etale sheaves
\begin{equation}\label{nice sheaf pairing}
\mathcal{L}_\Lambda(k-1)\otimes\mathcal{L}_\Lambda(k-1)\map{}
\underline{\Lambda}.
\end{equation}

For any \'etale sheaf $\mathcal{F}$ on $Y_1(N)_{/L}$ define
\begin{equation}\label{cuspidal cohomology}
\tilde{H}^*(Y_1(N)_{/L},\mathcal{F})=
\mathrm{Image}\big(
H^*_c(Y_1(N)_{/L}, \mathcal{F})
\map{}
H^*(Y_1(N)_{/L}, \mathcal{F})
\big).
\end{equation}

%%%%%%%%%%%%%%%%%%%%%%%%%%%%%%%%%%%%%%%%%%%%%%%%%%%%%%%%55

\subsection{Augmentations}

%%%%%%%%%%%%%%%%%%%%%%%%%%%%%%%%%%%%%%%%%%%%%%%%%%%%%%%%%%

Let $L/F$ be an algebraic extension and let 
$\Gamma$ be any one of $\Gamma_0(N)$, $\Gamma_0(\mathbf{N})$,
$\Gamma_1(N)$, or $\Gamma_1(N,M)$.
If $E$ is an elliptic curve over $L$, define a 
$\mathrm{Gal}(F^\mathrm{al}/L)$-module
\begin{displaymath}
\mathcal{A}_\Lambda(E)=(\mathrm{Sym}^{2k-2}E[m])(1-k)
\end{displaymath}
(where $E[m]=E(F^\mathrm{al})[m]$) and set
$
\mathcal{A}^\circ_\Lambda(E)=
\mathcal{A}_\Lambda(E)^{\mathrm{Gal}(F^\mathrm{al}/L)}.
$
Note that $\mathcal{A}_\Lambda$ and $\mathcal{A}^\circ_\Lambda$ are
naturally covariant functors on the category of elliptic curves over $L$.
The construction of $\mathcal{A}_\Lambda(E)$ depends on the
embedding $L\hookrightarrow F^\mathrm{al}$, 
but that of $\mathcal{A}_\Lambda^\circ(E)$
does not, in the sense that the $\Lambda$-modules defined by two
different choices are canonically isomorphic.

\begin{Def}
By a \emph{$\Lambda$-augmented $\Gamma$ structure} over an algebraic
extension $L/F$ we mean a triple $(E,x,\Theta)$ in which 
$(E,x)$ is an elliptic curve with  $\Gamma$ structure
over $L$ and  $\Theta\in\mathcal{A}_\Lambda^\circ(E)$.
\end{Def}

Two  $\Lambda$-augmented $\Gamma$ structures over $L$,
$(E_0,x_0,\Theta_0)$ and $(E_1,x_1,\Theta_1)$, are \emph{isomorphic}
if there is an isomorphism (over $L$) of elliptic curves $\phi:E_0\map{}E_1$
such such that $\phi$ identifies $x_0$ with $x_1$ and 
$\phi(\Theta_0)=\Theta_1$.
If $(E,x,\Theta)$ is a $\Lambda$-augmented $\Gamma$ structure
over $F^\mathrm{al}$ and
$\sigma$ is an automorphism of $F^\mathrm{al}$, there is an evident 
notion of the \emph{conjugate} $\Lambda$-augmented $\Gamma$ structure
$(E,x,\Theta)^\sigma=(E^\sigma,x^\sigma,\Theta^\sigma)$.

\begin{Def}\label{moduli definition}
Given a $\Lambda$-augmented $\Gamma$ structure 
$(E,x,\Theta)$ over $F^\mathrm{al}$
the \emph{field of moduli}, $L$, of $(E,x,\Theta)$ is the 
extension of $F$ characterized by the property that
$\sigma\in\mathrm{Gal}(F^\mathrm{al}/F)$ fixes $L$ if and only if
$(E, x, \Theta)^\sigma$ is 
isomorphic (over $F^\mathrm{al}$) to $(E,x,\Theta)$.
\end{Def}

\begin{Rem}\label{bold notation}
We will often use $(E,\mathbf{C},\Theta)$ to 
denote a $\Lambda$-augmented $\Gamma_0(\mathbf{N})$
structure, and write
$C\subset\mathbf{C}$ for the $\Gamma_0(N)$ structure obtained by
forgetting the $\Gamma_0(M)$ structure.
\end{Rem}

If $z\in Y_1(N,M)_{/L}$ is a closed point we let 
$E^\mathrm{univ}_z$ be the pullback of the universal elliptic curve 
over $Y_1(N,M)_{/L}$ to $k(z)$.
Define the module of \emph{$\Lambda$-augmented cycles on $Y_1(N,M)_{/L}$}
\begin{displaymath}
\mathcal{A}_\Lambda^\circ(Y_1(N,M)_{/L})=
\bigoplus_z \mathcal{A}_\Lambda^\circ(E_z^\mathrm{univ}),
\end{displaymath}
where the sum is over all closed points of $Y_1(N,M)_{/L}$
(note that $\mathcal{A}_\Lambda^\circ(E_z^\mathrm{univ})$ means the points of
$\mathcal{A}_\Lambda(E_z^\mathrm{univ})$ defined over the field of definition 
of $E_z^\mathrm{univ}$, $k(z)$, \emph{not} over $L$).
For any set of closed points $Z\subset Y_1(N,M)_{/L}$ define
\begin{displaymath}
\mathcal{A}_\Lambda^\circ(Z;Y_1(N,M)_{/L})
\end{displaymath} in the same way, but with 
the sum restricted to $z\in Z$.  We also define
\begin{equation}
\label{aug sum}
\mathcal{A}_\Lambda(\Gamma_1(N,M))=\bigoplus_{(E,x)}\mathcal{A}_\Lambda(E),
\end{equation}
where the sum is over isomorphism classes of $\Gamma_1(N,M)$ structures
over $F^\mathrm{al}$. A $\Lambda$-augmented 
$\Gamma_0(N)$-structure $(E,x,\Theta)$ over $F^\mathrm{al}$ defines an element
of the modular (\ref{aug sum}), denoted the same way, by taking
the element $\Theta$ in the summand attached to $(E,x)$ and $0$
in the other summands. The module $\mathcal{A}_\Lambda(\Gamma_1(N,M))$ 
has a natural action of $\mathrm{Gal}(F^\mathrm{al}/L)$, and 
\begin{equation}\label{moduli cycles}
\mathcal{A}_\Lambda^\circ(Y_1(N,M)_{/L})\cong 
\mathcal{A}_\Lambda(\Gamma_1(N,M))^{\mathrm{Gal}(F^\mathrm{al}/L)}.
\end{equation}
Indeed, a   closed point $z\in Y_1(N,M)_{/L}$ and a
$\Theta\in\mathcal{A}_\Lambda^\circ(E_z^\mathrm{univ})$ determine 
a $\Lambda$-augmented $\Gamma_1(N,M)$ structure
$(E^\mathrm{univ}_z,x^\mathrm{univ}_z,\Theta)$ over $k(z)$, 
where $(E_z^\mathrm{univ},x_z^\mathrm{univ})$ is the pullback to 
$k(z)$ of the universal $\Gamma_1(N,M)$ structure.
Each embedding of $L$-algebras $k(z)\hookrightarrow F^\mathrm{al}$ 
then determines 
a $\Lambda$-augmented $\Gamma_1(N,M)$ structure
over $F^\mathrm{al}$, and summing over all embeddings 
$k(z)\hookrightarrow F^\mathrm{al}$
determines an element of the right hand side of (\ref{moduli cycles}).
Extending linearly over all $z$ and $\Theta$ gives the desired map.  
The construction of the inverse is similar and easy.

%%%%%%%%%%%%%%%%%%%%%%%%%%%%%%%%%%%%%%%%%%%%%%%%%%%%%%%%%%%%%

\subsection{A higher weight Kummer map}
\label{Gamma_1}

%%%%%%%%%%%%%%%%%%%%%%%%%%%%%%%%%%%%%%%%%%%%%%%%%%%%%%%%%%%%

In this subsection $M=1$.
Let $L\subset F^\mathrm{al}$ be an algebraic extension of $F$.
Fix a closed point $z\in Y_1(N)_{/L}$ and
and write $i_z$ for the closed immersion 
$\mathrm{Spec}(k(z))\map{}Y_1(N)_{/L}$.
Denote by $j:Y_1(N)_{/F^\mathrm{al}}\hookrightarrow 
X_1(N)_{/F^\mathrm{al}}$ the usual 
compactification.

\begin{Lem}\label{purity}
There are canonical isomorphisms
\begin{displaymath}
\mathcal{A}_\Lambda^\circ(E_z^\mathrm{univ})
\cong
H^0(z,i_z^*\mathcal{L}_\Lambda(k-1))
\cong
H_{z}^2(Y_1(N)_{/L},\mathcal{L}_{\Lambda}(k)).
\end{displaymath}
\end{Lem}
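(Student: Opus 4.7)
The plan is to establish the two isomorphisms separately: the first is a stalk computation packaging the Weil pairing already recorded in (\ref{nice sheaf iso}), and the second is an instance of absolute cohomological purity for the codimension-one closed immersion $i_z$. Before doing either, I would note that the hypotheses we need are in place: $Y_1(N)_{/L}$ is a smooth curve over the perfect field $L$ (since $N\geq 4$ and $\mathbf{N}$ is a unit on $L$), the primes $\ell$ and $\mathrm{char}(F)$ are coprime, and the closed point $z$ has residue field $k(z)$ finite over $L$, so $H^0(z,-)$ on a constructible sheaf is simply the module of $\mathrm{Gal}(F^{\mathrm{al}}/k(z))$-invariants.

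For the first isomorphism, the strategy is to compute the stalk of $\mathcal{L}_\Lambda(k-1)$ at a geometric point $\bar z$ above $z$. Proper base change applied to the Cartesian pullback of $\pi^{\mathrm{univ}}$ along $\mathrm{Spec}\, k(z)^{\mathrm{al}} \to Y_1(N)_{/L}$ gives $(R^1\pi_*^{\mathrm{univ}}\underline{\Lambda})_{\bar z} \cong H^1_{\mathrm{\acute{e}t}}(E^{\mathrm{univ}}_{\bar z},\underline{\Lambda})$, and by Kummer (the same identification underlying (\ref{nice sheaf iso})) this is canonically $E_{\bar z}^{\mathrm{univ}}[m](-1)$ as a Galois module. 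Taking $\mathrm{Sym}^{2k-2}$ and using the general identity $\mathrm{Sym}^n(V(-1))\cong \mathrm{Sym}^n(V)(-n)$ (valid because the Tate twist is free of rank one), I get
\[
i_z^*\mathcal{L}_\Lambda(k-1) \cong (\mathrm{Sym}^{2k-2} E_z^{\mathrm{univ}}[m])(2-2k+k-1) = \mathcal{A}_\Lambda(E_z^{\mathrm{univ}})
\]
as $\mathrm{Gal}(F^{\mathrm{al}}/k(z))$-modules; passing to invariants yields the first isomorphism.

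For the second isomorphism, I would invoke absolute cohomological purity (SGA 4, XVI, or SGA 5) for the regular closed immersion $i_z$ of codimension $1$ into the smooth $L$-curve $Y_1(N)_{/L}$. Since $\mathcal{L}_\Lambda$ is locally constant of $\ell$-power order and $\ell$ is invertible on the base, purity gives $R^q i_z^! \mathcal{L}_\Lambda(k) = 0$ for $q\neq 2$ and a canonical isomorphism $R^2 i_z^! \mathcal{L}_\Lambda(k) \cong i_z^*\mathcal{L}_\Lambda(k)(-1) = i_z^*\mathcal{L}_\Lambda(k-1)$. The local cohomology spectral sequence $E_2^{p,q}=H^p(z,R^q i_z^!\mathcal{L}_\Lambda(k))\Rightarrow H^{p+q}_z(Y_1(N)_{/L},\mathcal{L}_\Lambda(k))$ then collapses, so the edge map with $p=0$, $q=2$ delivers $H^2_z(Y_1(N)_{/L},\mathcal{L}_\Lambda(k))\cong H^0(z,i_z^*\mathcal{L}_\Lambda(k-1))$, completing the chain.

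The main thing demanding care is the Tate-twist bookkeeping: the cancellation of the $(2-2k)$ twist from the Kummer/Weil identification against the given twists $(k-1)$ and $(k)$ is precisely what lands both sides on $\mathcal{A}_\Lambda^\circ(E_z^{\mathrm{univ}})$. I do not expect a serious obstacle, since purity for codimension-one regular immersions into smooth schemes is classical and avoids any appeal to Gabber's absolute purity theorem.
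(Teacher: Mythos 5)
Your proposal is correct and follows the same route as the paper, which simply cites the identification (\ref{nice sheaf iso}) for the first isomorphism and cohomological purity (Milne, \'Etale Cohomology, Ch.~VI \S 5) for the second; you have merely written out the stalk/twist bookkeeping and the collapsing local-cohomology spectral sequence that the paper leaves implicit. No gap.
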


\begin{proof}
The first isomorphism is induced by the isomorphism (\ref{nice sheaf iso}),
and the second is a consequence of cohomological purity as in
\cite[Chapter VI \S5]{milne:etale}.
%fix a geometric point $\bar{z}$ above $z$ and take 
%$\mathrm{Aut}(\bar{z}/z)$-invariants of the stalk at $\bar{z}$ of
%\begin{displaymath}
%\big(\mathrm{Sym}^{2k-2}(\underline{E}^\mathrm{univ}[m])\big)(1-k)
%\cong\mathcal{L}_\Lambda(k-1).
%\end{displaymath}
%The second isomorphism in the statement of the lemma is the Gysin
%map: cohomological purity \cite[Chapter VI \S5]{milne:etale}
%gives an isomorphism
%\begin{displaymath}
%H^0\big(z, i_z^*\mathcal{L}(k-1)\otimes 
%\underline{H}^2_z(Y_1(N)_{/L},\underline{\mu}_{m})\big) 
%\cong H^2_z(Y_1(N)_{/L},\mathcal{L}_\Lambda(k))
%\end{displaymath}
%for a sheaf $\underline{H}^2_z(Y_1(N)_{/L},\underline{\mu}_{m})$ 
%on $z$ which is locally 
%isomorphic to $\underline{\Lambda}$, and whose global sections
%are identified with $H^2_z(Y_1(N)_{/L},\underline{\mu}_{m})$.  
%The composition
%\begin{displaymath}
%\Lambda\cong H^1_z(Y_1(N)_{/L},\mathbb{G}_m)
%\otimes_\mathbb{Z}\Lambda \hookrightarrow
%H^2_z(Y_1(N)_{/L},\underline{\mu}_{m})
%\end{displaymath}
%(the isomorphism by \cite[Chapter VI \S 6]{milne:duality}, the injection by
%the Kummer sequence) exhibits a nontrivial global section of order $m$, so 
%$\underline{H}^2_z(Y_1(N)_{/L},\underline{\mu}_{m})\cong \Lambda$.
\end{proof}

\begin{Lem}\label{HS}
There is a canonical isomorphism
$$
H^2(X_1(N)_{/L},j_*\mathcal{L}_\Lambda(k))
\cong
H^1(F^\mathrm{al}/L, 
\tilde{H}^1(Y_1(N)_{/F^\mathrm{al}},\mathcal{L}_\Lambda(k)).
$$
%There are canonical isomorphisms
%\begin{displaymath}
%H^2(Y_1(N)_{/L}, \mathcal{L}_{\Lambda}(k))  \cong
%H^1(F^\mathrm{al}/L, H^1(Y_1(N)_{/F^\mathrm{al}},\mathcal{L}_\Lambda)(k))
%\end{displaymath}
%\begin{displaymath}
%H^2_c(Y_1(N)_{/L}, \mathcal{L}_{\Lambda}(k))  \cong
%H^1(F^\mathrm{al}/L, H^1_c(Y_1(N)_{/F^\mathrm{al}},\mathcal{L}_\Lambda)(k)).
%\end{displaymath}
\end{Lem}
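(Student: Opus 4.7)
The plan is to combine the Hochschild-Serre spectral sequence for the base change $X_1(N)_{/F^{\mathrm{al}}} \to X_1(N)_{/L}$ with the standard identification of $H^1$ of the cuspidal compactification with parabolic cohomology. Abbreviating $X = X_1(N)_{/F^{\mathrm{al}}}$ and $Y = Y_1(N)_{/F^{\mathrm{al}}}$, the spectral sequence reads
$$E_2^{p,q} = H^p\bigl(F^{\mathrm{al}}/L,\, H^q(X, j_*\mathcal{L}_\Lambda(k))\bigr) \Longrightarrow H^{p+q}(X_1(N)_{/L}, j_*\mathcal{L}_\Lambda(k)),$$
so it suffices to establish (i) $H^0(X, j_*\mathcal{L}_\Lambda(k)) = 0$, (ii) $H^2(X, j_*\mathcal{L}_\Lambda(k)) = 0$, and (iii) $H^1(X, j_*\mathcal{L}_\Lambda(k)) \cong \tilde{H}^1(Y, \mathcal{L}_\Lambda(k))$. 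Given (i) and (ii), the only surviving term in total degree $2$ is $E_2^{1,1}$, and the differentials into or out of it land in groups that already vanish.

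For (i), note $H^0(X, j_*\mathcal{L}_\Lambda(k)) = H^0(Y, \mathcal{L}_\Lambda(k))$ is the module of geometric monodromy invariants; by the discussion in \S\ref{sheaves} this monodromy contains $\mathrm{SL}_2(\Lambda)$, which has no nonzero fixed vectors on $\mathrm{Sym}^{2k-2}(\Lambda^2)$ (for $k>1$). For (ii), the short exact sequence
$$0 \to j_!\mathcal{L}_\Lambda(k) \to j_*\mathcal{L}_\Lambda(k) \to \mathcal{Q} \to 0$$
has $\mathcal{Q}$ supported on the finite cusp scheme, so $H^{\geq 1}(X, \mathcal{Q}) = 0$ and the long exact sequence yields $H^2(X, j_*\mathcal{L}_\Lambda(k)) \cong H^2_c(Y, \mathcal{L}_\Lambda(k))$. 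Poincar\'e duality for the smooth open curve $Y$ combined with the self-duality pairing (\ref{nice sheaf pairing}) identifies this with the $\Lambda$-dual of $H^0(Y, \mathcal{L}_\Lambda(k-1))$, which vanishes by the same monodromy argument.

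For (iii), the cusp-supported exact sequence above also yields a surjection $H^1_c(Y, \mathcal{L}_\Lambda(k)) = H^1(X, j_!\mathcal{L}_\Lambda(k)) \twoheadrightarrow H^1(X, j_*\mathcal{L}_\Lambda(k))$, while the five-term exact sequence of the Leray spectral sequence for $j$ exhibits $H^1(X, j_*\mathcal{L}_\Lambda(k))$ as a submodule of $H^1(Y, \mathcal{L}_\Lambda(k))$. The composite of these two maps is the canonical comparison $H^1_c(Y, \mathcal{L}_\Lambda(k)) \to H^1(Y, \mathcal{L}_\Lambda(k))$, so its image, namely $H^1(X, j_*\mathcal{L}_\Lambda(k))$, is precisely $\tilde{H}^1(Y, \mathcal{L}_\Lambda(k))$.

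The main obstacle is justifying the vanishing statements (i) and (ii): both rest on the non-formal fact that the geometric monodromy of $\mathcal{L}_\Lambda$ contains $\mathrm{SL}_2(\Lambda)$, which was established via the auxiliary cover $Y_{/F^{\mathrm{al}}} \to Y_1(N)_{/F^{\mathrm{al}}}$ of \S\ref{sheaves}. Once these are in hand, the Hochschild-Serre degeneration and the comparison with parabolic cohomology are essentially formal.
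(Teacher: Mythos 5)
Your proposal is correct and follows essentially the same route as the paper: the Hochschild--Serre spectral sequence for $F^{\mathrm{al}}/L$, vanishing of $H^0$ and $H^2$ of $j_*\mathcal{L}_\Lambda$ over $F^{\mathrm{al}}$ (the former by the absence of $\mathrm{SL}_2(\Lambda)$-invariants in $\mathrm{Sym}^{2k-2}\Lambda^2$, the latter by Poincar\'e duality via $H^2(X,j_*\mathcal{L}_\Lambda)\cong H^2_c(Y,\mathcal{L}_\Lambda)$), together with the identification of $H^1(X,j_*\mathcal{L}_\Lambda)$ with $\tilde H^1(Y,\mathcal{L}_\Lambda)$. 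You merely supply more detail than the paper for the last identification, which the paper invokes as well known.
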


\begin{proof}
One checks directly that $\mathrm{Sym}^{2k-2}\Lambda^2$ has 
no $\mathrm{SL}_2(\Lambda)$-invariants, and hence, by the discussion 
of \S \ref{sheaves}, $H^0(Y_1(N)_{/F^\mathrm{al}},\mathcal{L}_\Lambda)=0$.
Using Poincar\'e duality we see also that the group
$$
H^2(X_1(N)_{/F^\mathrm{al}},j_*\mathcal{L})\cong
H^2_c(Y_1(N)_{/F^\mathrm{al}},\mathcal{L})
$$
is trivial, and so
\begin{equation}\label{degree one}
H^i(X_1(N)_{/F^\mathrm{al}}, j_*\mathcal{L}_\Lambda)=0
\end{equation}
for $i\not=1$.  Thus the Hochschild-Serre spectral sequence
and the identification
\begin{equation}\label{well known}
H^1(X_1(N)_{/F^\mathrm{al}},j_*\mathcal{L}_\Lambda)\cong
\tilde{H}^1(Y_1(N)_{/F^\mathrm{al}},\mathcal{L}_\Lambda)
\end{equation}
yield the desired isomorphism.
\end{proof}

\begin{Def}\label{kummer}
Combining Lemmas \ref{purity} and \ref{HS} with the homomorphism
$$
H_{z}^2(Y_1(N)_{/L},\mathcal{L}_{\Lambda}(k))
\map{}
H^2(X_1(N)_{/L},j_*\mathcal{L}_\Lambda(k))
$$ 
we obtain a map
\begin{displaymath}
\mathcal{A}_\Lambda^\circ(E_z^\mathrm{univ})
\map{}
H^1\big(F^\mathrm{al}/L,\tilde{H}^1(Y_1(N)_{/F^\mathrm{al}},
\mathcal{L}_\Lambda)(k)\big)
\end{displaymath}
for each closed point $z\in Y_1(N)_{/L}$.  This map
extends linearly to define the \emph{$\Lambda$-augmented Kummer map}
\begin{displaymath}
\mathcal{A}_\Lambda^\circ(Y_1(N)_{/L})\map{}H^1\big(F^\mathrm{al}/L, 
\tilde{H}^1(Y_1(N)_{/F^\mathrm{al}},\mathcal{L}_\Lambda)(k)\big).
\end{displaymath}
\end{Def}

We now give an alternate definition of the $\Lambda$-augmented
Kummer map.  The proof of the equivalence of the two definitions 
requires only minor modification of 
\cite[Lemma 9.4]{Jannsen} and is omitted.
  Given a closed point $z\in Y_1(N)_{/L}$, 
let $U=U_{/F^\mathrm{al}}$ be the open complement of 
$z\times_L F^\mathrm{al}$ in $X_1(N)_{/F^\mathrm{al}}$.
Excision and the relative cohomology sequence
give the exact sequence
\begin{equation}\label{relative}
0\map{} H^1(X_1(N)_{/F^\mathrm{al}},j_*\mathcal{L}_\Lambda) 
\map{}
H^1(U,j_*\mathcal{L}_\Lambda)
\map{} 
H^2_{z\times_L F^\mathrm{al}}(Y_1(N)_{/F^\mathrm{al}},\mathcal{L}_\Lambda)
\map{}0
\end{equation}
where the initial and terminating zeros are  justified by 
cohomological purity and (\ref{degree one}), respectively.
Using Lemma \ref{purity} we may identify
$\mathcal{A}_\Lambda^\circ(E_z^\mathrm{univ})$
with the $\mathrm{Gal}(F^\mathrm{al}/L)$-invariants of
\begin{displaymath}
\bigoplus_{w\in z\times_L F^\mathrm{al}}\mathcal{A}_\Lambda
(E_w^\mathrm{univ})\cong
H^2_{z\times_L F^\mathrm{al}}(Y_1(N)_{/F^\mathrm{al}},\mathcal{L}_\Lambda)(k),
\end{displaymath}
and the connecting homomorphism
\begin{equation}\label{alt kummer}
\mathcal{A}_\Lambda^\circ(E_z^\mathrm{univ}) \map{}
H^1(F^\mathrm{al}/L, H^1(X_1(N)_{/F^\mathrm{al}},j_*\mathcal{L}_\Lambda)(k) )
\end{equation}
then agrees with Definition \ref{kummer}, using the identification
of (\ref{well known}).

For $L/F$ any algebraic extension, the group $\Delta$ acts  
on $Y_1(N,M)_{/L}$ through the diamond automorphisms.
There is a similar action of  $\Delta$ on $\mathcal{A}_\Lambda(\Gamma_1(N,M))$
commuting with the $\mathrm{Gal}(F^\mathrm{al}/L)$-action, 
and so $\Delta$ also acts
on $\mathcal{A}_\Lambda^\circ(Y_1(N,M)_{/L})$ by (\ref{moduli cycles}),
and on $\mathcal{A}_\Lambda^\circ(Z;Y_1(N,M)_{/L})$ for any subset
$Z\subset Y_1(N,M)_{/L}$ stable under $\Delta$.
There is also a familiar action of $\Delta$ on
the cohomology $H^i(Y_1(N)_{/L},\mathcal{L}_\Lambda(j))$ for any $i$ and $j$,
on compactly supported cohomology,
and on the cohomology supported on $Z$ for any closed set
$Z\subset Y_1(N)_{/L}$ stable under $\Delta$.
The action of $\Delta$ is compatible with
the $\Lambda$-augmented Kummer map of Definition 
\ref{kummer}.

%%%%%%%%%%%%%%%%%%%%%%%%%%%%%%%%%%%%%%%%%%%%%%%%%%%%%%%%%

\subsection{Augmented $\Gamma_0(N)$ structures}
\label{Gamma_0}

%%%%%%%%%%%%%%%%%%%%%%%%%%%%%%%%%%%%%%%%%%%%%%%%%%%%%%%%%%

Now fix a $\Lambda$-augmented $\Gamma_0(\mathbf{N})$ structure 
$(E,\mathbf{C},\Theta)$ over $F^\mathrm{al}$ and suppose 
$L$ is a finite extension 
of $F$ containing the field of moduli of   $(E,\mathbf{C},\Theta)$.
In particular $L$ contains
the field of moduli (in the usual sense) of the pair $(E,\mathbf{C})$,
and so determines a closed point $y\in Y_0(\mathbf{N})_{/L}$ 
with residue field $L$. Let $Z\subset Y_1(N,M)_{/L}$ 
denote the set of closed points lying above $y$
under the forgetful degeneracy map 
\begin{displaymath}
F_{N,M}:Y_1(N,M)_{/L}\map{}Y_0(\mathbf{N})_{/L}.
\end{displaymath}
Let $P_1,\ldots, P_{\varphi(N)}$  be the generators of $C$ 
(using the convention of Remark 
\ref{bold notation}) and let $x_i$ be the $\Gamma_1(N,M)$
structure on $E$ determined by $P_i$ and the $\Gamma_0(M)$ structure
underlying $\mathbf{C}$.  Define, using (\ref{moduli cycles}),
\begin{equation}\label{the class}
F^*_{N,M}(E,\mathbf{C},\Theta)
=\sum_{1=1}^{\varphi(N)} (E,x_i,\Theta)
\in \mathcal{A}_\Lambda^\circ(Z;Y_1(N,M)_{/L})^\Delta.
\end{equation}

Taking $M=1$ for the moment, we denote by 
\begin{equation}\label{classes}
\Omega_{L}(E,C,\Theta)\in 
H^1\big(F^\mathrm{al}/L,
\tilde{H}^1(Y_1(N)_{/F^\mathrm{al}},\mathcal{L}_\Lambda)(k)\big)^\Delta
\end{equation}
the image of $F^*_N(E,C,\Theta)$ under the $\Lambda$-augmented
Kummer map of Definition \ref{kummer}.  Allowing $L$ to vary
over all finite extensions of $F$ containing the field of
moduli of $(E,C,\Theta)$, the formation of
$\Omega_L(E,C,\Theta)$ is compatible with the restriction
maps on Galois cohomology.

%%%%%%%%%%%%%%%%%%%%%%%%%%%%%%%%%%%%%%%%%%%%%%%%%%%%%%%%%%

\subsection{Reduction and ramification}

%%%%%%%%%%%%%%%%%%%%%%%%%%%%%%%%%%%%%%%%%%%%%%%%%%%%%%%%%%

In this subsection we assume that $M=1$ and $F$ is a finite
extension of $\mathbb{Q}_q$ for some prime $q\nmid \ell N$.
Let $\mathbb{F}^\mathrm{al}$ and $\mathbb{F}$ denote the residue fields of 
$F^\mathrm{al}$ and $F$, respectively, so that
\begin{equation}\label{deligne base change}
W_\Lambda\stackrel{\mathrm{def}}{=}
\tilde{H}^1(Y_1(N)_{/F^\mathrm{al}},\mathcal{L}_\Lambda)
\cong \tilde{H}^1(Y_1(N)_{/\mathbb{F}^\mathrm{al}},\mathcal{L}_\Lambda)
\end{equation}
is an unramified $\mathrm{Gal}(F^\mathrm{al}/F)$-module.
%\cite[Theorem 7.4.1.2]{conrad}.
Let $(E,C,\Theta)$ be a $\Lambda$-augmented $\Gamma_0(N)$ structure
over $F^\mathrm{al}$, and assume that $E$ has good reduction.
The reduction of $(E,C)$, a $\Gamma_0(N)$-structure
over the field $\mathbb{F}^\mathrm{al}$, is denoted
$(\mathrm{red}(E),\mathrm{red}(C))$, and we identify 
$E(F^\mathrm{al})[m]$ with 
$\mathrm{red}(E)(\mathbb{F}^\mathrm{al})[m]$ as $\Lambda$-modules.
This determines an isomorphism
$
\mathrm{red}:\mathcal{A}_\Lambda(E)\cong \mathcal{A}_\Lambda(\mathrm{red}(E)),
$ 
and so we obtain a $\Lambda$-augmented $\Gamma_0(N)$ structure
\begin{displaymath}
\mathrm{red}(E,C,\Theta)=(\mathrm{red}(E),\mathrm{red}(C),\mathrm{red}(\Theta))
\end{displaymath}
over $\mathbb{F}^\mathrm{al}$. If 
$L\subset F^\mathrm{al}$ is a finite extension of $F$ 
containing the field of moduli
of $(E,C,\Theta)$ then the residue field of $L$, $\mathbb{L}$,
contains the field of moduli of $\mathrm{red}(E,C,\Theta)$,
and so we may form 
\begin{displaymath}
\Omega_{\mathbb{L}}(\mathrm{red}(E),\mathrm{red}(C),\mathrm{red}(\Theta))\in 
H^1(\mathbb{F}^\mathrm{al}/\mathbb{L},W_\Lambda(k))^\Delta.
\end{displaymath}

\begin{Prop}\label{Prop:reduction}
Suppose $\ell\nmid \varphi(N)$.
In the notation above, $\Omega_{L}(E,C,\Theta)$
is equal to the image of 
$\Omega_{\mathbb{L}}(\mathrm{red}(E),\mathrm{red}(C),\mathrm{red}(\Theta))$
under the inflation map
\begin{displaymath}
H^1(\mathbb{F}^\mathrm{al}/\mathbb{L}, 
W_\Lambda(k))\cong H^1(L^\mathrm{unr}/L,W_\Lambda(k))
\map{}H^1(F^\mathrm{al}/L,W_\Lambda(k)),
\end{displaymath}
where $L^\mathrm{unr}\subset F^\mathrm{al}$ is the 
maximal unramified extension of $L$.  
\end{Prop}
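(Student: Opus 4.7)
The plan is to lift the construction of the Kummer class to an integral model over $\mathcal{O}_L$ and then specialize to both the generic and special fibres. Since $q\nmid N$, the moduli schemes $\mathcal{Y}_1(N)$ and $\mathcal{X}_1(N)$ spread out to smooth schemes over $\mathrm{Spec}(\mathcal{O}_L)$; because $m$ is an $\ell$-power coprime to $q$, the $m$-torsion of the universal elliptic curve is finite \'etale, so the sheaf $\mathcal{L}_\Lambda$ of (\ref{sheaf def}) extends to a locally constant \'etale sheaf on the integral model whose restrictions to the generic and special fibres recover the sheaves used to construct $\Omega_L$ and $\Omega_{\mathbb{L}}$.

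Because $E$ has good reduction and $L$ contains the field of moduli of $(E,C,\Theta)$, the point $y\in Y_0(N)_{/L}$ extends uniquely to a section $\mathrm{Spec}(\mathcal{O}_L)\to \mathcal{Y}_0(N)$, and its pullback along the forgetful degeneracy map $F_N\colon \mathcal{Y}_1(N)\to \mathcal{Y}_0(N)$ is a finite \'etale cover $\mathcal{Z}\to \mathrm{Spec}(\mathcal{O}_L)$ with generic fibre $Z$ and special fibre $\bar{Z}$. Canonical identification of $E[m]$ with $\mathrm{red}(E)[m]$ as $\Lambda$-modules then promotes the cycle of (\ref{the class}) to an element $\widetilde{F_N^*}(E,C,\Theta)\in \mathcal{A}_\Lambda^\circ(\mathcal{Z};\mathcal{Y}_1(N)_{/\mathcal{O}_L})^\Delta$ whose restrictions to the two fibres are $F_N^*(E,C,\Theta)$ and $F_N^*(\mathrm{red}(E,C,\Theta))$ respectively.

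The heart of the argument is then to repeat the constructions of Lemmas \ref{purity} and \ref{HS} over $\mathrm{Spec}(\mathcal{O}_L)$. Absolute cohomological purity applied to the smooth closed subscheme $\mathcal{Z}\subset \mathcal{Y}_1(N)_{/\mathcal{O}_L}$ yields an integral analogue of the excision sequence (\ref{relative}); proper base change applied to $\mathcal{X}_1(N)_{/\mathcal{O}_L}$ identifies $H^1(X_1(N)_{/F^\mathrm{al}}, j_*\mathcal{L}_\Lambda)$ with $W_\Lambda$ as an unramified $\mathrm{Gal}(F^\mathrm{al}/L)$-module, as already recorded in (\ref{deligne base change}), and the Hochschild--Serre spectral sequence produces the integral analogue of Lemma \ref{HS}. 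Feeding $\widetilde{F_N^*}(E,C,\Theta)$ into the resulting connecting map produces a single cohomology class in $H^1(L^\mathrm{unr}/L, W_\Lambda(k))^\Delta \cong H^1(\mathbb{F}^\mathrm{al}/\mathbb{L}, W_\Lambda(k))^\Delta$; functoriality of the excision connecting homomorphism with respect to pullback to the two fibres identifies its image after inflation to $F^\mathrm{al}/L$ with $\Omega_L(E,C,\Theta)$ and its underlying class over $\mathbb{L}$ with $\Omega_{\mathbb{L}}(\mathrm{red}(E,C,\Theta))$, which is the asserted formula.

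The main obstacle is verifying the integral analogues of the vanishing (\ref{degree one}) and the identification (\ref{well known}): concretely, one must show that $R^i f_* j_*\mathcal{L}_\Lambda$ is concentrated in degree $1$ and locally constant on $\mathrm{Spec}(\mathcal{O}_L)$, where $f\colon \mathcal{X}_1(N)_{/\mathcal{O}_L}\to \mathrm{Spec}(\mathcal{O}_L)$ is the structure map. This is a standard application of smooth and proper base change together with (\ref{deligne base change}), but it must be organised compatibly with the $\Delta$-action on cohomology and with the cycle class supported on $\mathcal{Z}$. The hypothesis $\ell\nmid \varphi(N)$ enters precisely here, ensuring that the $\Delta$-averaging implicit in the sum defining (\ref{the class}) commutes cleanly with reduction modulo $\mathfrak{m}_L$, so that the classes on the two fibres agree on the nose rather than up to $\varphi(N)$-torsion.
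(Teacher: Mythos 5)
Your overall strategy --- spreading the excision/purity construction of the augmented Kummer map out over an integral model and comparing its generic and special fibres --- is exactly the paper's, and the body of your argument is sound. The one genuine misstep is the final paragraph, on the role of the hypothesis $\ell\nmid\varphi(N)$. There is no ``$\Delta$-averaging'' in (\ref{the class}) that could introduce $\varphi(N)$-torsion discrepancies: $F_N^*(E,C,\Theta)$ is a plain finite sum of augmented structures, and its compatibility with reduction is immediate from the definitions (this is the first sentence of the paper's proof). The hypothesis is needed for a different reason, which your write-up quietly suppresses when you assert that the fibre of $F_N$ over the section through $y$ is a finite \'etale cover $\mathcal{Z}\to\mathrm{Spec}(\mathcal{O}_L)$: the closed points $z\in Z$ above $y$ in general have residue field a nontrivial extension $L'/L$ of degree dividing $\varphi(N)$ (generated by a point of exact order $N$ on $E$), so $\underline{z}\times R$ is not a single copy of $\mathrm{Spec}(R)$ and the pullback of $\mathcal{L}_\Lambda$ to it is not constant. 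The paper handles this by first base-changing to $L'$ --- where every point of $Z\times_L L'$ is rational, the relevant closed subscheme is a disjoint union of sections, and the restriction of the sheaf to each section is constant, so its global sections can be computed in either geometric fibre --- and then descending to $L$ via the isomorphism $H^1(F^{\mathrm{al}}/L,W_\Lambda(k))\cong H^1(F^{\mathrm{al}}/L',W_\Lambda(k))^{\mathrm{Gal}(L'/L)}$ (and its residue-field analogue), which holds precisely because $[L':L]$ divides $\varphi(N)$ and is therefore invertible in $\Lambda$.

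Your version can be salvaged without that detour, but only by supplying what you skipped: one must check that each $L'/L$ is unramified (true, since $E$ has good reduction and $q\nmid \ell N$, so $E[Nm]$ is unramified over $L$), so that the closure of $Z$ really is finite \'etale over $\mathcal{O}_L$, and then verify that the global sections of $i^*\mathcal{L}_\Lambda(k-1)$ over $\mathcal{Z}$, over $Z$, and over $\bar{Z}$ all agree with $\bigoplus_z\mathcal{A}^\circ_\Lambda(E_z^{\mathrm{univ}})$. Either way, your stated explanation of where $\ell\nmid\varphi(N)$ enters should be replaced by the restriction--corestriction argument above.
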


\begin{proof}
It is clear from the definition that the construction
\begin{displaymath}
(E,C,\Theta)\mapsto F^*_{N}(E,C,\Theta)
\end{displaymath}
is compatible with reduction, so the proof of the proposition amounts
to verifying that the constructions of \S \ref{Gamma_1} extend
across integral models.  Let $Z\subset Y_1(N)_{/L}$ be as in \S
\ref{Gamma_0} and suppose for the moment that every $z\in Z$ has
residue field $L$.  Denoting the integer ring of $L$ by $\mathcal{O}_L$,
each $z\in Y_1(N)_{/L}$ extends to a smooth section
$\underline{z}:\mathrm{Spec}(\mathcal{O}_L)\map{}X_1(N)_{/\mathcal{O}_L}$ 
of the canonical integral model 
of $X_1(N)$ over $\mathcal{O}_L$.  Since $E_z^\mathrm{univ}$ 
has potentially good reduction,
this section does not meet the cusps in the special fiber, and so
factors through the affine subscheme $Y_1(N)_{/\mathcal{O}_L}$.  
The sequence (\ref{relative}) extends across 
integral models over the integer ring of the maximal unramified 
extension of $L$, denoted $R$, to give the middle row of the commutative
diagram
\begin{displaymath}
\xymatrix{
0\ar[r]& 
H^1(X_{/\mathbb{F}^\mathrm{al}},j_*\mathcal{L}_\Lambda)\ar[r]&
H^1(U_{/\mathbb{F}^\mathrm{al}},j_*\mathcal{L}_\Lambda)\ar[r]&
H^2_{\underline{z}\times\mathbb{F}^\mathrm{al}}
(Y_{/\mathbb{F}^\mathrm{al}},\mathcal{L}_\Lambda)\ar[r]&
0
\\
0\ar[r]& 
H^1(X_{/R},j_*\mathcal{L}_\Lambda)\ar[r]\ar[d]\ar[u]&
H^1(U_{/R},j_*\mathcal{L}_\Lambda)\ar[r]\ar[d]\ar[u]&
H^2_{\underline{z}\times R}(Y_{/R},\mathcal{L}_\Lambda)\ar[r]\ar[d]\ar[u]&
0\\
0\ar[r]& 
H^1(X_{/F^\mathrm{al}},j_*\mathcal{L}_\Lambda)\ar[r]&
H^1(U_{/F^\mathrm{al}},j_*\mathcal{L}_\Lambda)\ar[r]&
H^2_{\underline{z}\times F^\mathrm{al}}
(Y_{/F^\mathrm{al}},\mathcal{L}_\Lambda)\ar[r]&
0
}
\end{displaymath}
where we abbreviate $X=X_1(N)$ and $Y=Y_1(N)$, and write
$U_{/R}$ for the complement of $\underline{z}\times R$ in $X_{/R}$.
Lemma \ref{purity} implies that the rightmost vertical arrows are
isomorphisms (the pullback of  $\mathcal{L}_\Lambda$ 
to $\underline{z}\times R$ is constant, so its global sections can be computed 
in either geometric fiber).  The vertical arrows on the left are isomorphisms,
by (\ref{well known})  and the isomorphism of 
(\ref{deligne base change}).  By the five lemma
the arrows in the middle column are isomorphisms as well, and
from this it follows that the map (\ref{alt kummer}) is compatible with
reduction.

For the general case, choose a $z\in Z$ and an embedding 
$k(z)\hookrightarrow F^\mathrm{al}$. Let $L'$ be the image of this embedding.
It is easily seen that $L'$ does not depend on the point $z$ or the choice of
embedding, that $L'/L$ is a Galois extension of degree dividing
$\varphi(N)$, and that every point in 
$Z\times_L L'\hookrightarrow Y_1(N)_{/L'}$ 
has residue field $L'$.  The proposition follows from the bijectivity of the
restriction map
\begin{displaymath}
H^1(F^\mathrm{al}/L, W_\Lambda(k))\cong 
H^1(F^\mathrm{al}/L', W_\Lambda(k))^{\mathrm{Gal}(L'/L)},
\end{displaymath}
and of the analogous map on the level of residue fields, 
together with the special case considered above.
\end{proof}

%%%%%%%%%%%%%%%%%%%%%%%%%%%%%%%%%%%%%%%%%%%%%%%%%%%%%%%%%%%%

\subsection{Degeneracy maps}
\label{degen}

%%%%%%%%%%%%%%%%%%%%%%%%%%%%%%%%%%%%%%%%%%%%%%%%%%%%%%%%%%%%

Recall that $M$ is squarefree.
Given a divisor $M'\mid M$ we define a degeneracy map
\begin{displaymath}
\alpha^M_{M'}: \mathcal{A}_\Lambda(\Gamma_1(N,M))\map{}
\mathcal{A}_\Lambda(\Gamma_1(N,M'))
\end{displaymath}
as follows. Given a $\Lambda$-augmented $\Gamma_1(N,M)$
structure $(E,x,\Theta)$ over $F^\mathrm{al}$, we define
\begin{displaymath}
\alpha^M_{M'}(E,x,\Theta)=(E,x',\Theta)
\end{displaymath}
where $x'$ is the $\Gamma_1(N,M')$ structure on $E$
underlying $x$.  Extend this $\Lambda$-linearly to a map
on $\mathcal{A}_\Lambda(\Gamma_1(N,M))$. We also define
a degeneracy map
\begin{displaymath}
\beta^M_{M'}: \mathcal{A}_\Lambda(\Gamma_1(N,M))\map{}
\mathcal{A}_\Lambda(\Gamma_1(N,M'))
\end{displaymath}
as follows.  Given a $\Lambda$-augmented $\Gamma_1(N,M)$
structure $(E,x,\Theta)$ over $F^\mathrm{al}$ let $P$ and $D$ be
the $\Gamma_1(N)$ and $\Gamma_0(M)$ structures underlying $x$.
Let $D_0\subset D$ be the subgroup of order $M/M'$, let
$E'=E/D_0$, and let $P'$ and $D'$ be the images of
$P$ and $D$ under $E\map{}E'$.
Let $\Theta'$ be the image of $\Theta$
under $\mathcal{A}_\Lambda(E)\map{}\mathcal{A}_\Lambda(E')$.
Write $x'$ for the $\Gamma_1(N,M')$ structure $(P',D')$ on $E'$.
Now define
\begin{displaymath}
\beta^M_{M'}(E,x,\Theta)=(E',x',\Theta')
\end{displaymath}
and again extend linearly. The maps $\alpha^M_{M'}$ and  $\beta^M_{M'}$
respect the $\mathrm{Gal}(F^\mathrm{al}/F)$ action, and so induce maps
\begin{displaymath}
\alpha^M_{M'},\beta^M_{M'}:\mathcal{A}_\Lambda^\circ(Y_1(N,M)_{/L})\map{}
\mathcal{A}_\Lambda^\circ(Y_1(N,M')_{/L})
\end{displaymath}
for any algebraic extension $L/F$.

%%%%%%%%%%%%%%%%%%%%%%%%%%%%%%%%%%%%%%%%%%%%%%%%%%%%%%%%%%%%%%%
%%%%%%%%%%%%%%%%%%%%%%%%%%%%%%%%%%%%%%%%%%%%%%%%%%%%%%%%%%%%%%

\section{Families of augmented CM points}
\label{S: families}

%%%%%%%%%%%%%%%%%%%%%%%%%%%%%%%%%%%%%%%%%%%%%%%%%%%%%%%%%%%%%%
%%%%%%%%%%%%%%%%%%%%%%%%%%%%%%%%%%%%%%%%%%%%%%%%%%%%%%%%%%%%%%%%%

Let $\Lambda=\mathbb{Z}/m\mathbb{Z}$ for an $\ell$-power $m$.
Fix a quadratic imaginary field $K\subset\mathbb{Q}^\mathrm{al}$,
assume that all prime divisors of $N$ are split in $K$,
and fix an ideal $\mathfrak{N}\subset\mathcal{O}_K$ such that 
$\mathcal{O}_K/\mathfrak{N}\cong\mathbb{Z}/N\mathbb{Z}$.
Fix an elliptic curve $E_1$ over 
$\mathbb{Q}^\mathrm{al}$ with complex multiplication by the
maximal order $\mathcal{O}_K$ 
(there are $\#\mathrm{Pic}(\mathcal{O}_K)$ such curves).  
Let $j:\mathcal{O}_K\hookrightarrow
\mathrm{End}_{\mathbb{Q}^\mathrm{al}}(E_1)$ be
normalized so that pullback by $j(\alpha)$ acts as multiplication by
$\alpha$ on the cotangent space of $E_1(\mathbb{C})$ for every $\alpha\in K$.
Set $C_1=E_1[\mathfrak{N}]$, a cyclic subgroup of order $N$.

\begin{Def}\label{cyclic}
  An element $c\in\mathrm{GL}_2(\mathbb{Q}_p)$ is 
\emph{cyclic} if $(c^{-1}\mathbb{Z}_p^2)$
  contains $\mathbb{Z}_p^2$ with cyclic quotient.  The \emph{degree} of a
  cyclic $c$ is
  \begin{displaymath}
  \deg(c)=[c^{-1}\mathbb{Z}_p^2:\mathbb{Z}_p^2]=p^{\mathrm{ord}_p(\det(c))}.
  \end{displaymath}
\end{Def}

%%%%%%%%%%%%%%%%%%%%%%%%%%%%%%%%%%%%%%%%%%%%%%%%%%%%%%%%%%%%%%%

\subsection{A parametrized family of Heegner points}
\label{tree}

%%%%%%%%%%%%%%%%%%%%%%%%%%%%%%%%%%%%%%%%%%%%%%%%%%%%%%%%%%%%%%

A choice of isomorphism of $\mathbb{Z}_p$-modules 
$\mathrm{Ta}_p(E_1)\cong\mathbb{Z}_p^2$ (which
we now fix) determines a family of elliptic curves
over $\mathbb{Q}^\mathrm{al}$ parametrized by 
\begin{displaymath}
\mathcal{T}=\mathbb{Q}_p^\times\mathrm{GL}_2(\mathbb{Z}_p) 
\backslash \mathrm{GL}_2(\mathbb{Q}_p)
\end{displaymath}
as follows.  For each cyclic subgroup $X\subset E_1[p^\infty]\cong
(\mathbb{Q}_p/\mathbb{Z}_p)^2$ there is a cyclic 
$c_X\in \mathrm{GL}_2(\mathbb{Q}_p)$ such that
$X=(c_X^{-1}\mathbb{Z}_p^2)/\mathbb{Z}_p^2$.  
The assignment $X\mapsto c_X$ establishes
a bijection between the set of such subgroups and the cyclic elements
of $\mathrm{GL}_2(\mathbb{Q}_p)$, modulo left 
multiplication by $\mathrm{GL}_2(\mathbb{Z}_p)$.  We
denote the inverse by $c\mapsto X_c$. The projection map
$\mathrm{GL}_2(\mathbb{Z}_p)\backslash
\mathrm{GL}_2(\mathbb{Q}_p)\map{}\mathcal{T}$ establishes a bijection
between the left $\mathrm{GL}_2(\mathbb{Z}_p)$-orbits 
of cyclic elements and the set
$\mathcal{T}$. To each $g\in\mathcal{T}$ we then define
$X_g\subset E_1[p^\infty]$ to be $X_c$ for any cyclic 
$c\in\mathrm{GL}_2(\mathbb{Q}_p)$
lifting $g$, and define a cyclic
$p$-power isogeny
\begin{displaymath}
f_g: E_1\map{}E_g=E_1/X_{g}
\end{displaymath}
of degree $\deg(c)$.  Define the \emph{degree} of $g$ by
$\deg(g)=\deg(c)=\deg(f_g)$.
The elliptic curve $E_g$ over $\mathbb{Q}^\mathrm{al}$ 
inherits a $\Gamma_0(N)$ structure 
$C_g=f_g(C_1)=E_g[\mathfrak{N}\cap\mathcal{O}_g]$,
where $\mathcal{O}_g\subset\mathcal{O}_K$ is the largest order
which leaves the subgroup $X_g\subset E_1[p^\infty]$ stable.
The conductor of $\mathcal{O}_g$ is a power of $p$.

For each $g\in\mathcal{T}$ let
$H_g$ be the ring class field of $\mathcal{O}_g$, thus $H_g/K$ is Galois
with Galois group canonically identified with $\mathrm{Pic}(\mathcal{O}_g)$.
The Weil pairing on $E_1[m]$ provides a canonical (up to $\pm 1$)
isomorphism between $(\mathrm{Sym}^2 E_1[m])(-1)$ and the traceless
$\Lambda$-module endomorphisms of $E_1[m]$.  In particular there is a
canonical (up to sign) element $\vartheta_1\in (\mathrm{Sym}^2 E_1[m])(-1)$
corresponding to $\sqrt{D}\in \mathrm{End}_{\mathbb{Q}^\mathrm{al}}(E_1)$.  
Let $\Theta_1 \in\mathcal{A}_{\Lambda}(E_1)$ be the image of
$\vartheta_1^{k-1}$ under the natural projection
\begin{displaymath}
\mathrm{Sym}^{k-1}\big((\mathrm{Sym}^2 E_1[m])(-1)\big)\map{}
\big(\mathrm{Sym}^{2k-2}E_1[m]\big)(1-k),
\end{displaymath}
and define 
$\Theta_g=f_g(\Theta_1) \in \mathcal{A}_\Lambda(E_g).$
The data $K$, $E_1$, $\mathfrak{N}$, and 
$\mathrm{Ta}_p(E_1)\cong\mathbb{Z}_p^2$ thus determine a family
$g\mapsto (E_g,C_g,\Theta_g)$
of $\Lambda$-augmented $\Gamma_0(N)$ structures over 
$\mathbb{Q}^\mathrm{al}$ parametrized 
by $\mathcal{T}$.  
\emph{This data is to remain fixed throughout the remainder 
of the article.}
%\begin{Prop}
%For every $g\in\mathcal{T}$, the field of moduli 
%(Definition \ref{moduli definition}, with $F=K$)
%of $(E_g,C_g,\Theta_g)$  is $H_g$.
%\end{Prop}
%\begin{proof}
%Let $j$ be the usual modular $j$-invariant.
%By the theory of complex multiplication $K(j(E_1))=H$, the Hilbert
%class field of $K$, and so we 
%may fix a model, $A$, of $E_1$ over $H$ and an isomorphism
%$E_1\cong A\times_H \mathbb{Q}^\mathrm{al}$.  Define $\vartheta_A$
%and $\Theta_A$ exactly as above, with $E_1$ replaced by $A$.
%As all endomorphisms of
%$A$ are defined over $H$, the subgroup $C_A=A[\mathfrak{N}]$
%is defined over $H$, and $\vartheta_A$ is fixed by 
%$\mathrm{Gal}(\mathbb{Q}^\mathrm{al}/H)$.
%It follows easily that $\Theta_A\in\mathcal{A}^\circ_\Lambda(A)$,
%and so the $\Lambda$-augmented $\Gamma_0(N)$ structure 
%$(E_1,C_1,\Theta_1)$ has a model $(A,C_A,\Theta_A)$ defined over $H$. 
%Again by the theory of
%complex multiplication, the subgroup of $A$ corresponding to $X_g$ 
%is defined over $H_g$, and it follows easily that $(E_g,C_g,\Theta_g)$
%has a model over $H_g$ for every $g\in\mathcal{T}$.  
%Hence the field of moduli of 
%$(E_g,C_g,\Theta_g)$ is contained in $H_g$, 
%and so must be equal to $H_g$ as $K(j(E_g))=H_g$.
%\end{proof}
Define a $\mathrm{Gal}(\mathbb{Q}^\mathrm{al}/K)$-module
\begin{equation}\label{our module}
W_\Lambda=\tilde{H}^1(Y_1(N)_{/\mathbb{Q}^\mathrm{al}},\mathcal{L}_\Lambda).
\end{equation}
Using the theory of complex multiplication it is easily seen that
the field of moduli of $(E_g,C_g,\Theta_g)$ is $H_g$, and so
the construction (\ref{classes}) yields a family of cohomology classes
parametrized by $g\in\mathcal{T}$ 
\begin{equation}
\label{Heegner family}
g\mapsto \Omega_{H_g}(E_g,C_g,\Theta_g)
\in H^1(\mathbb{Q}^\mathrm{al}/H_g,W_\Lambda(k))^\Delta.
\end{equation}
Let $H[p^s]$ denote the ring class field of conductor $p^s$ of $K$.
For each $s\ge 0$ define $\mathcal{T}_s\subset \mathcal{T}$ to be the 
subset consisting of all $g$ such that $H_g\subset H[p^s]$. 
For $g\in\mathcal{T}_s$ we let 
\begin{equation}\label{class param}
\Omega_s(g) = \Omega_{H[p^s]}(E_g,C_g,\Theta_g)
\in H^1(\mathbb{Q}^\mathrm{al}/H[p^s],W_\Lambda(k))^\Delta
\end{equation}
be the restriction of the cohomology class of (\ref{Heegner family})
to $\mathrm{Gal}(\mathbb{Q}^\mathrm{al}/H[p^s])$.

%%%%%%%%%%%%%%%%%%%%%%%%%%%%%%%%%%%%%%%%%%%%%%%%%%%%%%%%%%%%%%%%%%%

\subsection{Level $M$ structure}
\label{level M}

%%%%%%%%%%%%%%%%%%%%%%%%%%%%%%%%%%%%%%%%%%%%%%%%%%%%%%%%%%%%%%%%%%%%

Suppose that the integer $M$ of \S \ref{notation} is a divisor of
$\mathrm{disc}(K)$ and let $\mathfrak{M}$ be the unique $\mathcal{O}_K$-ideal
of norm $M$.  Although we assumed in \S \ref{tree} that 
all prime divisors of $N$ are split in $K$, the constructions of
\S \ref{tree} 
work equally well with $N$ replaced by $\mathbf{N}=NM$ and $\mathfrak{N}$
replaced by $\mathfrak{N M}$.  This has the effect of endowing
each $(E_g,C_g)$ with the extra $\Gamma_0(\mathbf{N})$ 
structure $\mathbf{C}_g=E_g[\mathfrak{NM}\cap \mathcal{O}_g]$, so that
$g\mapsto (E_g, \mathbf{C}_g, \Theta_g)$ is a parametrized family of
$\Lambda$-augmented $\Gamma_0(\mathbf{N})$ structures.

%%%%%%%%%%%%%%%%%%%%%%%%%%%%%%%%%%%%%%%%%%%%%%%%%%%%%%%%%
%%%%%%%%%%%%%%%%%%%%%%%%%%%%%%%%%%%%%%%%%%%%%%%%%%%%%%%
 
\section{Reduction of the family}
\label{S: reduction of the family}

%%%%%%%%%%%%%%%%%%%%%%%%%%%%%%%%%%%%%%%%%%%%%%%%%%%%%%%%%%
%%%%%%%%%%%%%%%%%%%%%%%%%%%%%%%%%%%%%%%%%%%%%%%%%%%%%%%

In this section we prove Theorem \ref{chaos}, which is our analogue of 
\cite[Theorem 3.1]{cornut02}.
Suppose $\ell\not=p$ and take $M$ to be a 
divisor of $\mathrm{disc}(K)$ as in \S \ref{level M}.  
Assume $\ell\nmid \varphi(N)$ and $\ell>2k-2$. 
Let $\Lambda$ be a finite quotient of $\mathbb{Z}_\ell$.  
Let $\mathfrak{Q}$ be a finite set of rational primes inert in $K$,
all prime to $\ell p \mathbf{N}$.  For each $q\in\mathfrak{Q}$, let
$\mathfrak{q}$ denote the prime of $K$ above $q$ and fix an extension
of $\mathfrak{q}$ to a place of $\mathbb{Q}^\mathrm{al}$.  
We will abusively use 
$\mathfrak{Q}$ to refer to the set of rational primes, 
the set of primes of $K$ above them,
and also the set of chosen places of $\mathbb{Q}^\mathrm{al}$.
Let $\mathbb{F}_\mathfrak{q}^\mathrm{al}$ and $\mathbb{F}_\mathfrak{q}$ 
denote the residue fields of
$\mathbb{Q}^\mathrm{al}$ and $K$ at $\mathfrak{q}\in\mathfrak{Q}$,
respectively, so that $\mathbb{F}_\mathfrak{q}$ has $q^2$ elements and 
$\mathbb{F}_\mathfrak{q}^\mathrm{al}$ is algebraically closed.
For each $\mathfrak{q}\in\mathfrak{Q}$ let 
\begin{displaymath}
Z_0(\mathbf{N})_\mathfrak{q}\subset Y_0(\mathbf{N})_{/\mathbb{F}_\mathfrak{q}}
\hspace{1cm}
Z_1(N,M)_\mathfrak{q}\subset Y_1(N,M)_{/\mathbb{F}_\mathfrak{q}}
\end{displaymath}
denote the subsets of supersingular points.
The points of $Z_0(\mathbf{N})_\mathfrak{q}$ all have residue 
field $\mathbb{F}_\mathfrak{q}$,
but this need not be true of $Z_1(N,M)_\mathfrak{q}$.

\begin{Def}\label{chaotic}
A subset $\mathcal{S}\subset \mathrm{Gal}(\mathbb{Q}^\mathrm{al}/K)$
is \emph{chaotic} if for any distinct $\sigma,\tau\in\mathcal{S}$,
the restriction of $\sigma\tau^{-1}$ to $\mathrm{Gal}(H[p^\infty]/K)$
is not the Artin symbol of any idele with trivial
$p$-component. 
\end{Def}

%%%%%%%%%%%%%%%%%%%%%%%%%%%%%%%%%%%%%%%%%%%%%%%%%%%%%%%%%%%%%%%%

\subsection{Simultaneous reduction}

%%%%%%%%%%%%%%%%%%%%%%%%%%%%%%%%%%%%%%%%%%%%%%%%%%%%%%%%%%%%%%%%

As $E_1$ has complex multiplication, 
any model of $E_1$ over a number field has everywhere
potentially good reduction.
Fix a finite Galois extension $F_1/K$ over which 
$E_1$ has a model with good reduction
at every prime above every rational prime $q\in\mathfrak{Q}$, 
and fix such a model.  All endomorphisms of $E_1$ are defined over
$F_1$, and hence so is the subgroup 
$\mathbf{C}_1=E_1[\mathfrak{NM}]$.
For each $g\in\mathcal{T}$ let $F_g$
be a finite  extension of $F_1$, Galois over $K$, 
over which the subgroup $X_g$ is defined.
We may then view $E_g$, $\mathbf{C}_g$, and the isogeny $f_g$ all 
as being defined over $F_g$.  Fixing these choices, we may 
reduce everything at $w_\mathfrak{q}$ to obtain a family of $\Lambda$-augmented
$\Gamma_0(\mathbf{N})$ structures over $\mathbb{F}_\mathfrak{q}^\mathrm{al}$
\begin{displaymath}
\mathrm{red}_\mathfrak{q}(E_g,\mathbf{C}_g,\Theta_g)=
(\mathrm{red}_\mathfrak{q}(E_g),
\mathrm{red}_\mathfrak{q}(\mathbf{C}_g),\mathrm{red}_\mathfrak{q}(\Theta_g)).
\end{displaymath}
We also denote by $\mathrm{red}_\mathfrak{q}(f_g)$ the reduction of the 
isogeny $f_g$.  Given an element 
$\sigma\in \mathrm{Gal}(\mathbb{Q}^\mathrm{al}/K)$ 
we may also form 
\begin{equation}
\label{twist reduction}
\mathrm{red}_\mathfrak{q}(E_g^\sigma,\mathbf{C}_g^\sigma,\Theta_g^\sigma)
=\big(\mathrm{red}_\mathfrak{q}(E_g^\sigma),
\mathrm{red}_\mathfrak{q}(\mathbf{C}_g^\sigma),
\mathrm{red}_\mathfrak{q}(\Theta_g^\sigma)\big)
\end{equation}
and 
\begin{displaymath}
\mathrm{red}_\mathfrak{q}(f_g^\sigma):
\mathrm{red}_\mathfrak{q}(E_1^\sigma)
\map{}\mathrm{red}_\mathfrak{q}(E_g^\sigma).
\end{displaymath}  
To emphasize, we regard these as
$\Lambda$-augmented $\Gamma_0(\mathbf{N})$ structures 
over $\mathbb{F}_\mathfrak{q}^\mathrm{al}$, 
regardless of the residue field of $F_g$ at $\mathfrak{q}$.  
The field of moduli of (\ref{twist reduction}) 
is $\mathbb{F}_\mathfrak{q}$; a fact
(Lemma \ref{local moduli})
whose proof we postpone until the next section.
Abbreviate
\begin{equation}\label{abbreviation}
\mathcal{Z}_\mathfrak{q}(M)=
\mathcal{A}_\Lambda^\circ(Z_1(N,M)_\mathfrak{q}; 
Y_1(N,M)_{/\mathbb{F}_\mathfrak{q}})^\Delta.
\end{equation}

Let $\Lambda[\mathcal{T}]$ denote the free $\Lambda$-module on the set
$\mathcal{T}$. For each $\sigma\in\mathrm{Gal}(\mathbb{Q}^\mathrm{al}/K)$ 
and each $\mathfrak{q}\in\mathfrak{Q}$ define the reduction map
$
\mathrm{Red}_{\sigma,\mathfrak{q}}:\Lambda[\mathcal{T}] \map{}
\mathcal{Z}_\mathfrak{q}(M)
$
by taking $L=\mathbb{F}_\mathfrak{q}$ in 
(\ref{the class}) and linearly extending  
\begin{displaymath}
\mathrm{Red}_{\sigma,\mathfrak{q}}(g) = 
F^*_{N,M}\big(\mathrm{red}_\mathfrak{q}
(E_g^\sigma),\mathrm{red}_\mathfrak{q}(\mathbf{C}_g^\sigma),
\mathrm{red}_\mathfrak{q}(\Theta_g^\sigma)\big).
\end{displaymath}
For any subset 
$\mathcal{S}\subset\mathrm{Gal}(\mathbb{Q}^\mathrm{al}/K)$ define the
simultaneous reduction map
\begin{equation}\label{SRM}
\mathrm{Red}_{\mathcal{S},\mathfrak{Q}}:\Lambda[\mathcal{T}]\map{}
\bigoplus_{(\sigma, \mathfrak{q})\in \mathcal{S}\times\mathfrak{Q}}
\mathcal{Z}_\mathfrak{q}(M)
\end{equation}
by  linearly extending $\mathrm{Red}_{\mathcal{S},\mathfrak{Q}}(g)=
\oplus_{\sigma,\mathfrak{q}} \mathrm{Red}_{\sigma,\mathfrak{q}}(g)$.
The reader may wish to skip directly to Theorem \ref{chaos},
the main result of \S \ref{S: reduction of the family}.

%%%%%%%%%%%%%%%%%%%%%%%%%%%%%%%%%%%%%%%%%%%%%%%%%%%

\subsection{Reduction at $\mathfrak{q}$}
\label{reduction}

%%%%%%%%%%%%%%%%%%%%%%%%%%%%%%%%%%%%%%%%%%%%%%%%%%%%%

Fix a $\mathfrak{q}\in\mathfrak{Q}$. Define
$S=\mathrm{End}_{\mathbb{F}_\mathfrak{q}^\mathrm{al}}
(\mathrm{red}_\mathfrak{q}(E_1))$ and $B=S\otimes\mathbb{Q}$
so that $B$ is a quaternion algebra ramified exactly at $q$ and
$\infty$, and $S$ is a maximal order in $B$.  Let 
$R\subset S$ be the subring of endomorphisms which leave
$\mathrm{red}_\mathfrak{q}(\mathbf{C}_1)$ stable, so that $R$ is a level 
$\mathbf{N}$-Eichler order in $B$. The embedding
$j:\mathcal{O}_K\map{}\mathrm{End}_{F_1}(E_1)$ 
determines an embedding which we again denote
by $j$
\begin{displaymath}
j:K\cong\mathrm{End}_{F_1}(E_1)\otimes\mathbb{Q}\hookrightarrow
\mathrm{End}_{\mathbb{F}_\mathfrak{q}^\mathrm{al}}
(\mathrm{red}_\mathfrak{q}(E_1))\otimes\mathbb{Q}\cong B,
\end{displaymath}
with $j(\mathcal{O}_K)\subset R$. For any rational prime $r$ 
and any $\mathbb{Z}$ (resp. $\mathbb{Q}$) algebra $A$, 
set $A_r=A\otimes_\mathbb{Z} \mathbb{Z}_r$
(resp. $A_r=A\otimes_\mathbb{Q} \mathbb{Q}_r$).  Let $\hat{B}$ be the 
restricted topological product $\prod'_r B_r$ with respect to
the local orders $R_r\subset B_r$, and define $\hat{K}, \hat{R},\ldots$
similarly.
The embedding $j$ induces  embeddings
$\hat{K}\hookrightarrow\hat{B}$ and $K_r\hookrightarrow B_r$
at every $r$.  We denote all of these again by $j$.

Recall that we have fixed an isomorphism
of $\mathbb{Z}_p$-modules $\mathrm{Ta}_p(E_1)\cong 
\mathbb{Z}_p^2$. As the $p$-adic Tate
modules of $E_1$ and $\mathrm{red}_\mathfrak{q}(E_1)$ 
are canonically identified as 
$\mathbb{Z}_p$-modules (and $R_p\subset B_p$ is a maximal order), 
this induces isomorphisms
\begin{equation}\label{p param}
R_p \cong M_2(\mathbb{Z}_p)
\hspace{1cm}
B_p \cong M_2(\mathbb{Q}_p).
\end{equation}
We henceforth identify $R_p^\times\cong\mathrm{GL}_2(\mathbb{Z}_p)$ and
$B_p^\times\cong\mathrm{GL}_2(\mathbb{Q}_p)$ using \emph{these} isomorphisms,
and in particular identify $\mathcal{T}$ with 
$\mathbb{Q}_p^\times R_p^\times\backslash B_p^\times$.
This gives a right action of $B_p^\times$ (and hence also of $B^\times$)
on $\mathcal{T}$.
The group $R_\ell^\times$ acts on 
$\mathrm{Ta}_\ell(\mathrm{red}_\mathfrak{q}(E_1))$
on the left, almost
by definition, and we denote by $\rho_\mathfrak{q}$ the action of
$R_\ell^\times$ on 
$\mathcal{A}_\Lambda(\mathrm{red}_\mathfrak{q}(E_1))$ 
obtained by taking symmetric powers,
with the understanding that $R_\ell^\times$ acts trivially on the twist
$\Lambda(1-k)$. Writing $\det$ for the reduced norm on 
$B^\times$, $B_\ell^\times$, and so on,  we also define 
$\rho_\mathfrak{q}^*=\rho_\mathfrak{q}\otimes\det^{1-k}$, 
and note that the center
$\mathbb{Z}_\ell^\times\subset B_\ell^\times$ 
acts trivially under $\rho_\mathfrak{q}^*$. The group 
$\Gamma_\mathfrak{q}= R[1/p]^\times$
acts on  $\mathcal{A}_\Lambda(\mathrm{red}_\mathfrak{q}(E_1))$ 
through $\rho_\mathfrak{q}$ or $\rho_\mathfrak{q}^*$
by the inclusion $\Gamma_\mathfrak{q}\hookrightarrow R_\ell^\times$.

Fix a $\sigma\in\mathrm{Gal}(\mathbb{Q}^\mathrm{al}/K)$ whose restriction
to $K^\mathrm{ab}$ (the maximal abelian extension of $K$)
is equal to the Artin symbol of a finite idele
$\hat{\sigma}\in\hat{K}^\times$.
Let $b_{\sigma,\mathfrak{q}}\in  B^\times$ be such that the $r$-component of
  $j(\hat{\sigma})b_{\sigma,\mathfrak{q}}$ lies in $R_r^\times$ for all primes
  $r\not=p$, and let $\alpha_{\sigma,\mathfrak{q}}\in R_\ell^\times$ and 
$\beta_{\sigma,\mathfrak{q}}\in B_p^\times$ be the
$\ell$ and $p$ components, respectively,  of 
$j(\hat{\sigma})b_{\sigma,\mathfrak{q}}\in\hat{B}^\times$.

\begin{Prop}\label{first action}
Fix $g,h\in\mathcal{T}$. There is a $\gamma\in\Gamma_\mathfrak{q}$ such that 
$g \beta_{\sigma,\mathfrak{q}} = h\gamma\in\mathcal{T}$,
if and only if there is an isomorphism of $\Gamma_0(\mathbf{N})$ structures
over $\mathbb{F}_\mathfrak{q}^\mathrm{al}$
\begin{displaymath}
\phi:\mathrm{red}_\mathfrak{q}(E_g^\sigma,\mathbf{C}_g^\sigma)\cong 
\mathrm{red}_\mathfrak{q}(E_h,\mathbf{C}_h).
\end{displaymath}
If these equivalent conditions hold then $\phi$ may be chosen so that
\begin{displaymath}
\phi(\mathrm{red}_\mathfrak{q}(\Theta_g^\sigma))= 
\omega_\mathrm{cyc}^{1-k}(\sigma)\cdot 
\mathrm{red}_\mathfrak{q}(f_h)
\big(\rho_\mathfrak{q}(\gamma \alpha_{\sigma,\mathfrak{q}}^{-1})
\mathrm{red}_\mathfrak{q}(\Theta_1)\big),
\end{displaymath}
where $\omega_\mathrm{cyc}$ is the $\ell$-adic cyclotomic
character and $\gamma\in\Gamma_\mathfrak{q}$ has the property that
there are cyclic (in the sense of Definition \ref{cyclic})
lifts $c(g), c(h)\in B_p^\times$ of $g$ and $h$
satisfying 
$c(g)\beta_{\sigma,\mathfrak{q}}=c(h)\gamma$ in 
$R_p^\times\backslash B_p^\times$.
\end{Prop}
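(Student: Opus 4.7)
The plan is to combine the main theorem of complex multiplication with Deuring's parametrization of supersingular reductions in order to translate both sides of the equivalence into a common double-coset statement in $B_p^\times$.

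First I would reduce the isomorphism question to an equality in $\Gamma_\mathfrak{q} \backslash \mathcal{T}$. Since $\mathrm{red}_\mathfrak{q}(E_1)$ is supersingular, the cyclic $p$-isogeny $f_g: E_1 \to E_g$ reduces to a cyclic $p$-isogeny of supersingular elliptic curves, and the identification (\ref{p param}) shows that left multiplication on $\mathcal{T}$ by an element $\gamma \in \Gamma_\mathfrak{q} = R[1/p]^\times$ corresponds to pre-composing this isogeny with the honest $\bar{\mathbb{F}}_\mathfrak{q}$-endomorphism $\gamma$ of $\mathrm{red}_\mathfrak{q}(E_1)$, which automatically preserves $\mathrm{red}_\mathfrak{q}(\mathbf{C}_1)$ by the definition of $R$. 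Conversely, any $\bar{\mathbb{F}}_\mathfrak{q}$-isomorphism of pairs $(\mathrm{red}_\mathfrak{q}(E_g), \mathrm{red}_\mathfrak{q}(\mathbf{C}_g)) \cong (\mathrm{red}_\mathfrak{q}(E_h), \mathrm{red}_\mathfrak{q}(\mathbf{C}_h))$ produces, after composing with the reductions of $f_g$ and $f_h$, an element of $B[1/p]^\times$ that stabilizes $\mathrm{red}_\mathfrak{q}(\mathbf{C}_1)$, hence an element of $\Gamma_\mathfrak{q}$.

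The action of $\sigma$ enters through Shimura reciprocity: the conjugate $E_1^\sigma$ is isogenous to $E_1$ via the isogeny encoded by $j(\hat{\sigma})$. The element $b_{\sigma,\mathfrak{q}} \in B^\times$ is chosen so that $j(\hat{\sigma}) b_{\sigma,\mathfrak{q}}$ has unit components outside $p$; after reduction at $\mathfrak{q}$ this furnishes an $\bar{\mathbb{F}}_\mathfrak{q}$-isomorphism $\mathrm{red}_\mathfrak{q}(E_1^\sigma) \cong \mathrm{red}_\mathfrak{q}(E_1)$ compatible with the full level $\mathbf{N}$-structure, while the $p$-component $\beta_{\sigma,\mathfrak{q}}$ records the resulting right modification of the cyclic parameter $g \in \mathcal{T}$. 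Combining this with the previous paragraph gives the stated equivalence: $\mathrm{red}_\mathfrak{q}(E_g^\sigma, \mathbf{C}_g^\sigma) \cong \mathrm{red}_\mathfrak{q}(E_h, \mathbf{C}_h)$ if and only if $g\beta_{\sigma,\mathfrak{q}}$ and $h$ coincide in $\Gamma_\mathfrak{q} \backslash \mathcal{T}$, with any such $\gamma$ producing the desired $\phi$.

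For the formula for $\phi(\mathrm{red}_\mathfrak{q}(\Theta_g^\sigma))$, I would write $\Theta_g^\sigma = f_g^\sigma(\Theta_1^\sigma)$ and trace each factor through the isomorphism of the previous step. The element $\vartheta_1 \in (\mathrm{Sym}^2 E_1[m])(-1)$ corresponds via the Weil pairing to $\sqrt{D} \in j(\mathcal{O}_K)$; since $\sqrt{D}$ is $K$-rational, it transports across $\mathrm{red}_\mathfrak{q}(E_1^\sigma) \cong \mathrm{red}_\mathfrak{q}(E_1)$ to the element whose $\ell$-adic incarnation is $\rho_\mathfrak{q}(\alpha_{\sigma,\mathfrak{q}}^{-1}) \mathrm{red}_\mathfrak{q}(\vartheta_1)$, with $\alpha_{\sigma,\mathfrak{q}}$ being the $\ell$-component of $j(\hat{\sigma}) b_{\sigma,\mathfrak{q}}$. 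Raising to the $(k-1)$-st symmetric power, pushing forward by $\mathrm{red}_\mathfrak{q}(f_h)$, and incorporating the $\Gamma_\mathfrak{q}$-shift $\gamma$ produces the operator $\rho_\mathfrak{q}(\gamma \alpha_{\sigma,\mathfrak{q}}^{-1})$. The scalar $\omega_{\mathrm{cyc}}^{1-k}(\sigma)$ comes from the Tate twist $\Lambda(1-k)$ in the definition $\mathcal{A}_\Lambda(E) = (\mathrm{Sym}^{2k-2} E[m])(1-k)$: this is the only part of the construction of $\Theta_1$ that is not already $K$-rational, and $\sigma$ acts on $\Lambda(1-k)$ precisely by $\omega_{\mathrm{cyc}}^{1-k}(\sigma)$.

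The main obstacle is the last step: confirming that the transport of $\mathrm{red}_\mathfrak{q}(\Theta_1^\sigma)$ produces exactly $\omega_{\mathrm{cyc}}^{1-k}(\sigma) \rho_\mathfrak{q}(\alpha_{\sigma,\mathfrak{q}}^{-1}) \mathrm{red}_\mathfrak{q}(\Theta_1)$ with no stray unit correction. This forces one to use the normalization of $j$ specified in \S\ref{S: families} (pullback by $j(\alpha)$ acts as multiplication by $\alpha$ on the cotangent space) to align the Weil pairing with $j$, so that the $(1-k)$ Tate twist is the unique source of the cyclotomic factor.
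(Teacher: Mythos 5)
Your proposal is correct and follows essentially the same route as the paper: the equivalence comes from Cornut's quaternionic parametrization of supersingular reductions (\cite[\S 2.3, \S 3.3]{cornut02}), the $\sigma$-twist is handled by the main theorem of complex multiplication exactly as you describe, and the formula for $\phi(\mathrm{red}_\mathfrak{q}(\Theta_g^\sigma))$ is obtained by tracing the composite isomorphism on $\ell$-adic Tate modules and reading off $\omega_{\mathrm{cyc}}^{1-k}(\sigma)$ from the Tate twist. The only slip is notational: $\Gamma_\mathfrak{q}$ acts on $\mathcal{T}$ on the \emph{right} (so the fibers of reduction are orbits in $\mathcal{T}/\Gamma_\mathfrak{q}$, not $\Gamma_\mathfrak{q}\backslash\mathcal{T}$), which does not affect the argument.
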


\begin{proof} 
For any $g\in\mathcal{T}$ there is an isomorphism of 
$\Gamma_0(\mathbf{N})$ structures over $\mathbb{F}_\mathfrak{q}^\mathrm{al}$
\begin{equation}\label{first action i}
\mathrm{red}_\mathfrak{q}(E_g^\sigma, \mathbf{C}_g^\sigma)\cong 
\mathrm{red}_\mathfrak{q}(E_{g\beta_{\sigma,\mathfrak{q}}},
\mathbf{C}_{g\beta_{\sigma,\mathfrak{q}}}).
\end{equation}
This is exactly the calculation performed in \cite[\S 3.3]{cornut02}.
On the other hand, by the parametrization of 
$Z_0(\mathbf{N})_\mathfrak{q}$ given 
in \cite[\S 2.3]{cornut02} there is an isomorphism
\begin{equation}\label{first action ii}
\mathrm{red}_\mathfrak{q}(E_{g\beta_{\sigma,\mathfrak{q}}},
\mathbf{C}_{g\beta_{\sigma,\mathfrak{q}}})
\cong
\mathrm{red}_\mathfrak{q}(E_h,\mathbf{C}_h)
\end{equation}
if and only if $g\beta_{\sigma,\mathfrak{q}}$ and $h$ lie in the same orbit
under the right action of $\Gamma_\mathfrak{q}$ on $\mathcal{T}$.  
This proves the first claim.
The proof of the second claim follows from an examination of 
the isomorphisms (\ref{first action i}) and (\ref{first action ii}),
and we give a sketch.
The isomorphisms (\ref{first action i}) and (\ref{first action ii}), 
disregarding the $\Gamma_0(\mathbf{N})$ structure,
arise from isomorphisms (again, see \cite[\S 3.3]{cornut02})
\begin{equation}
\mathrm{red}_\mathfrak{q}(E^\sigma_g)  \cong  
\mathrm{Hom}_R( R\cdot c(g)j(\hat{\sigma}),\mathrm{red}_\mathfrak{q}(E_1)) 
\label{first action iii}
\end{equation}
\begin{equation}
\mathrm{red}_\mathfrak{q}(E_h)  \cong 
\mathrm{Hom}_R( R\cdot c(h),\mathrm{red}_\mathfrak{q}(E_1))
\label{first action iv}
\end{equation}
of functors on $\mathbb{F}_\mathfrak{q}^\mathrm{al}$-schemes, where  
$\mathrm{Hom}_R$ means homomorphisms of left $R$-modules, and
$c(g)$ and $c(h)$ are viewed as elements of 
$\hat{B}^\times$ with trivial components away from $p$.
The map $x\mapsto x b_{\sigma,\mathfrak{q}}\gamma^{-1}$.
induces an isomorphism of left $R$-submodules of $\hat{B}$
\begin{displaymath}
R\cdot c(g)j(\hat{\sigma}) \map{\cdot b_{\sigma,\mathfrak{q}}}
R\cdot c(g)j(\hat{\sigma})b_{\sigma,\mathfrak{q}} =
R\cdot c(g)\beta_{\sigma,\mathfrak{q}} =
R\cdot c(h)\gamma \map{\cdot \gamma^{-1}}
R\cdot c(h)
\end{displaymath}
and so identifies 
$\mathrm{red}_\mathfrak{q}(E^\sigma_g)\cong \mathrm{red}_\mathfrak{q}(E_h)$
and 
\begin{equation}
\label{R modules}
\mathrm{Hom}_{R_\ell}(R_\ell\cdot j(\hat{\sigma})_\ell , 
\mathrm{Ta}_\ell(\mathrm{red}_\mathfrak{q}(E_1))
\cong
\mathrm{Hom}_{R_\ell}(R_\ell ,  
\mathrm{Ta}_\ell(\mathrm{red}_\mathfrak{q}(E_1))
\end{equation}
By the main theorem of complex multiplication, the isomorphism 
(\ref{first action iii}) may be chosen so that the induced isomorphism
\begin{displaymath}
\mathrm{Ta}_\ell(E_1)\map{f_g}\mathrm{Ta}_\ell(E_g)\map{\sigma}
\mathrm{Ta}_\ell(E_g^\sigma)
\cong \mathrm{Hom}_{R_\ell}(R_\ell \cdot j(\hat{\sigma})_\ell, 
\mathrm{Ta}_\ell(\mathrm{red}_\mathfrak{q}(E_1))
\end{displaymath}
takes $t\in\mathrm{Ta}_\ell(E_1)\cong 
\mathrm{Ta}_\ell(\mathrm{red}_\mathfrak{q}(E_1))$
to the $R_\ell$-linear map determined by 
$j(\hat{\sigma})_\ell \mapsto t $. 
The isomorphism
(\ref{R modules}) takes $j(\hat{\sigma})_\ell \mapsto t$ to 
$\alpha_{\sigma,\mathfrak{q}}\gamma^{-1}\mapsto t$.  Under
(\ref{first action iv}) this latter map corresponds to 
$\mathrm{red}_{\mathfrak{q}}(f_h) (\gamma\alpha_{\sigma,\mathfrak{q}}^{-1} t)\
\in\mathrm{Ta}_\ell(\mathrm{red}_\mathfrak{q}(E_h))
$
This shows that the composition
\begin{displaymath}
\mathrm{Ta}_\ell(E_1)\map{f_g}\mathrm{Ta}_\ell(E_g)\map{\sigma}
\mathrm{Ta}_\ell(E_g^\sigma)\cong
\mathrm{Ta}_\ell(\mathrm{red}_\mathfrak{q}(E_g^\sigma))
\cong \mathrm{Ta}_\ell(\mathrm{red}_\mathfrak{q}(E_h))
\end{displaymath}
is given by $t\mapsto \mathrm{red}_\mathfrak{q}(f_h)( 
\gamma\alpha_{\sigma,\mathfrak{q}}^{-1} t)$.
The proposition now follows by taking symmetric
powers and twisting by $\Lambda(1-k)$.
\end{proof}

\begin{Cor}\label{second action}
Let $\sigma$ and  $\beta_{\sigma,\mathfrak{q}}$ 
be as in Proposition \ref{first action}.  
For each $h\in\mathcal{T}$ there is a 
$\varpi_{\sigma,\mathfrak{q},h}\in
\mathcal{A}_\Lambda(\mathrm{red}_\mathfrak{q}(E_1))$
with the property that for any  
$g\in h\Gamma_\mathfrak{q}\beta_{\sigma,\mathfrak{q}}^{-1}\subset\mathcal{T}$
there exists an isomorphism of 
$\Lambda$-augmented $\Gamma_0(N)$ 
structures over $\mathbb{F}_\mathfrak{q}^\mathrm{al}$
\begin{equation}\label{monodromy}
\mathrm{red}_\mathfrak{q}
(E_g^\sigma,\mathbf{C}_g^\sigma,\deg(g)^{1-k}\cdot \Theta_g^\sigma)
\cong
\big(\mathrm{red}_\mathfrak{q}(E_h), \mathrm{red}_\mathfrak{q}(\mathbf{C}_h), 
\mathrm{red}_\mathfrak{q}(f_h)( 
\rho^*_\mathfrak{q}(\gamma)\varpi_{\sigma,\mathfrak{q},h} )\big)
\end{equation}
where $\gamma\in\Gamma_\mathfrak{q}$ is any element with 
$g\beta_{\sigma,\mathfrak{q}}= h\gamma$ in $\mathcal{T}$.
\end{Cor}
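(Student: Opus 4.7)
The plan is to derive the statement directly from Proposition \ref{first action} by rearranging its formula so that all $g$-dependence is captured by the factor $\rho_\mathfrak{q}^*(\gamma)$, leaving everything else to be absorbed into an element $\varpi_{\sigma,\mathfrak{q},h}\in\mathcal{A}_\Lambda(\mathrm{red}_\mathfrak{q}(E_1))$ depending only on $h$ and $\sigma$. The condition $g\in h\Gamma_\mathfrak{q}\beta_{\sigma,\mathfrak{q}}^{-1}$ is exactly the hypothesis of Proposition \ref{first action}, producing an isomorphism $\phi$ of the underlying $\Gamma_0(\mathbf{N})$ structures such that
$$\phi(\mathrm{red}_\mathfrak{q}(\Theta_g^\sigma))=\omega_{\mathrm{cyc}}^{1-k}(\sigma)\cdot\mathrm{red}_\mathfrak{q}(f_h)\bigl(\rho_\mathfrak{q}(\gamma\alpha_{\sigma,\mathfrak{q}}^{-1})\mathrm{red}_\mathfrak{q}(\Theta_1)\bigr).$$

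Invoking the defining relation $\rho_\mathfrak{q}^*=\rho_\mathfrak{q}\otimes\det^{1-k}$, I will rewrite $\rho_\mathfrak{q}(\gamma)=\det(\gamma)^{k-1}\rho_\mathfrak{q}^*(\gamma)$ and multiply both sides by $\deg(g)^{1-k}\in\Lambda^\times$ (a unit since $\ell\neq p$), yielding
$$\phi\bigl(\deg(g)^{1-k}\mathrm{red}_\mathfrak{q}(\Theta_g^\sigma)\bigr)=\mathrm{red}_\mathfrak{q}(f_h)\bigl(\rho_\mathfrak{q}^*(\gamma)\cdot\lambda(g)\cdot\rho_\mathfrak{q}(\alpha_{\sigma,\mathfrak{q}}^{-1})\mathrm{red}_\mathfrak{q}(\Theta_1)\bigr),$$
where $\lambda(g)=\omega_{\mathrm{cyc}}^{1-k}(\sigma)\bigl(\det(\gamma)/\deg(g)\bigr)^{k-1}$. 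The crux is then to show that $\lambda(g)$ is independent of $g$. Taking the reduced norm of the relation $c(g)\beta_{\sigma,\mathfrak{q}}\in R_p^\times c(h)\gamma$ in $B_p^\times$ supplied by Proposition \ref{first action}, and applying $\mathrm{ord}_p$, produces
$$\mathrm{ord}_p\bigl(\det(\gamma)/\deg(g)\bigr)=\mathrm{ord}_p\det(\beta_{\sigma,\mathfrak{q}})-\mathrm{ord}_p\deg(h),$$
since $\deg(g)=p^{\mathrm{ord}_p\det c(g)}$ and $\det(R_p^\times)\subset\mathbb{Z}_p^\times$. The remaining sign ambiguity in $\det(\gamma)\in\mathbb{Z}[1/p]^\times=\pm p^{\mathbb{Z}}$ (equivalently, the freedom in the choice of $\gamma$) is harmless because $\rho_\mathfrak{q}^*$ is trivial on rational scalars: $\rho_\mathfrak{q}^*(c)=c^{2k-2}\cdot(c^2)^{1-k}=1$ for any $c\in\mathbb{Q}^\times\subset B^\times$, so changing $\gamma$ by a scalar unit does not affect the right hand side.

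Fixing a choice of $\gamma$ once and for all and setting $\varpi_{\sigma,\mathfrak{q},h}=\lambda\cdot\rho_\mathfrak{q}(\alpha_{\sigma,\mathfrak{q}}^{-1})\mathrm{red}_\mathfrak{q}(\Theta_1)\in\mathcal{A}_\Lambda(\mathrm{red}_\mathfrak{q}(E_1))$ (with $\lambda$ the common value of $\lambda(g)$) then recovers the asserted isomorphism (\ref{monodromy}). The main obstacle is the scalar bookkeeping just outlined, in particular verifying that the $\deg(g)^{1-k}$ normalization in the statement is precisely what compensates the $\det^{1-k}$ twist in the definition of $\rho_\mathfrak{q}^*$; the rest is routine, given the content of Proposition \ref{first action}.
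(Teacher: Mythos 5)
Your proposal is correct and is essentially the paper's own argument: both start from the formula in Proposition \ref{first action}, absorb the factor $\det(\gamma)^{k-1}$ needed to pass from $\rho_\mathfrak{q}$ to $\rho^*_\mathfrak{q}$ into a scalar, and verify $g$-independence by taking $\mathrm{ord}_p$ of reduced norms in the relation $c(g)\beta_{\sigma,\mathfrak{q}}=c(h)\gamma$ (the paper phrases this as $\deg(g)\det(\gamma_0)^{-1}=\deg(h)p^{-\mathrm{ord}_p\det(\beta_{\sigma,\mathfrak{q}})}$). Your handling of the ambiguity in $\gamma$ via the triviality of $\rho^*_\mathfrak{q}$ on $\mathbb{Q}^\times$ is exactly the paper's normalization $\gamma_0\in\gamma\cdot\mathbb{Z}[1/p]^\times$ with $\rho^*_\mathfrak{q}(\gamma_0)=\rho^*_\mathfrak{q}(\gamma)$.
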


\begin{proof}
Suppose we have an equality $g=h\gamma\beta^{-1}_{\sigma,\mathfrak{q}}$
in $\mathcal{T}$ with $g,h\in\mathcal{T}$ and $\gamma\in\Gamma_\mathfrak{q}$.
Fix cyclic lifts $c(g)$ and $c(h)$ of $g$ and $h$, respectively, 
to $B_p^\times$, and choose $\gamma_0\in\gamma\cdot \mathbb{Z}[1/p]^\times$
so that $c(g)\beta_{\sigma,\mathfrak{q}}=c(h)\gamma_0$ in 
$R_p^\times\backslash B_p^\times$.  Using Proposition \ref{first action}
and the fact that 
$\rho_\mathfrak{q}^*(\gamma_0)=\rho_\mathfrak{q}^*(\gamma)$, one checks
directly that (\ref{monodromy}) holds with
\begin{displaymath}
\varpi_{\sigma,\mathfrak{q},h}=
\Big(\deg(g)\omega_\mathrm{cyc}(\sigma)
\det(\gamma_0^{-1})\det(\alpha_{\sigma,\mathfrak{q}})\Big)^{1-k}
\rho^*_\mathfrak{q}(\alpha_{\sigma,\mathfrak{q}}^{-1})
\mathrm{red}_\mathfrak{q}(\Theta).
\end{displaymath}
As $\deg(g)\det(\gamma_0)^{-1}
=\deg(h)p^{-\mathrm{ord_p}\det(\beta_{\sigma,\mathfrak{q}})}$
depends on $h$ but not on $g$, the same is true of 
$\varpi_{\sigma,\mathfrak{q},h}$.
\end{proof}

%%%%%%%%%%%%%%%%%%%%%%%%%%%%%%%%%%%%%%%%%%%%%%%%%%%%%%%%%%%%%%%

\subsection{Vatsal's lemma}

%%%%%%%%%%%%%%%%%%%%%%%%%%%%%%%%%%%%%%%%%%%%%%%%%%%%%%%%%%%%%%%%

Fix a subset $\mathcal{S}\subset\mathrm{Gal}(\mathbb{Q}^\mathrm{al}/K)$.
For each $\mathfrak{q}\in\mathfrak{Q}$ and each $\sigma\in\mathcal{S}$
let $\beta_{\sigma,\mathfrak{q}}\in B_p^\times$ be as in Proposition 
\ref{first action}.  The quaternion algebra $B$ depends on $\mathfrak{q}$,
but using the isomorphisms of (\ref{p param}) we identify
$B_p^\times\cong\mathrm{GL}_2(\mathbb{Q}_p)$ and view both 
$\beta_{\sigma,\mathfrak{q}}$
and $\Gamma_\mathfrak{q}$ as living in $\mathrm{GL}_2(\mathbb{Q}_p)$ 
under this identification.

\begin{Lem}\label{smaller subgroups}
For each $\mathfrak{q}\in\mathfrak{Q}$
there is a finite index subgroup 
$\Gamma_\mathfrak{q}^*\subset\Gamma_\mathfrak{q}$ 
containing $\mathbb{Z}[1/p]^\times$ such that
$\det(\Gamma_\mathfrak{q}^*)=p^\mathbb{Z}$ 
and the restriction of 
$\rho^*_\mathfrak{q}$ to $\Gamma_\mathfrak{q}^*$ is trivial.
\end{Lem}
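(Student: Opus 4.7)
The plan is to set $\Gamma^*_\mathfrak{q} := \Gamma_\mathfrak{q} \cap \ker \rho^*_\mathfrak{q}$ via the embedding $\Gamma_\mathfrak{q} \hookrightarrow R_\ell^\times$, and verify the four required properties in turn. Since $\mathcal{A}_\Lambda(\mathrm{red}_\mathfrak{q}(E_1))$ is a finite module ($\Lambda$ is a finite quotient of $\mathbb{Z}_\ell$), the representation $\rho^*_\mathfrak{q}$ has finite image, its kernel is an open finite-index subgroup of $R_\ell^\times$, and intersecting with $\Gamma_\mathfrak{q}$ preserves finite index. Triviality of $\rho^*_\mathfrak{q}|_{\Gamma^*_\mathfrak{q}}$ is immediate, and the computation recalled just before the lemma (the whole point of the twist by $\det^{1-k}$) gives $\mathbb{Z}[1/p]^\times \subset \mathbb{Z}_\ell^\times \subset \ker \rho^*_\mathfrak{q}$, so $\mathbb{Z}[1/p]^\times \subset \Gamma^*_\mathfrak{q}$.

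The substantive task is the determinant identity $\det(\Gamma^*_\mathfrak{q}) = p^\mathbb{Z}$. The inclusion $\subseteq$ is automatic: since $B$ is ramified at infinity, the reduced norm of any unit of $R[1/p]$ is a positive element of $\mathbb{Z}[1/p]^\times$, hence lies in $p^\mathbb{Z}$. For the reverse inclusion I would invoke strong approximation for the norm-one group $B^1$: since $B$ splits at $p$, $B^1(\mathbb{Q}_p) \cong \mathrm{SL}_2(\mathbb{Q}_p)$ is non-compact, so $\Gamma^0_\mathfrak{q} := \Gamma_\mathfrak{q} \cap \ker(\det)$ is dense in $R^1_\ell = \mathrm{SL}_2(\mathbb{Z}_\ell)$. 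Continuity of $\rho^*_\mathfrak{q}$ combined with finiteness of its image promotes density to equality of images $G_0 := \rho^*_\mathfrak{q}(\Gamma^0_\mathfrak{q}) = \rho^*_\mathfrak{q}(\mathrm{SL}_2(\mathbb{Z}_\ell))$ inside the finite group $G := \rho^*_\mathfrak{q}(\Gamma_\mathfrak{q})$.

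Picking any $\pi \in \Gamma_\mathfrak{q}$ with $\det(\pi) = p$ (such a $\pi$ exists by standard local-global arguments at the split prime $p$), the problem reduces to showing $\rho^*_\mathfrak{q}(\pi) \in G_0$; granted this, one finds $\eta \in \Gamma^0_\mathfrak{q}$ with $\rho^*_\mathfrak{q}(\eta) = \rho^*_\mathfrak{q}(\pi)$, and then $\pi\eta^{-1} \in \Gamma^*_\mathfrak{q}$ has determinant $p$. Combined with $p^2 = \det(p\cdot 1) \in \det(\mathbb{Z}[1/p]^\times)$ this yields $\det(\Gamma^*_\mathfrak{q}) = p^\mathbb{Z}$. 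The main obstacle, and where I expect the hypothesis $\ell > 2k-2$ to be used, is this last inclusion $\rho^*_\mathfrak{q}(\pi) \in G_0$. Because $\rho^*_\mathfrak{q}$ factors through $\mathrm{PGL}_2(\mathbb{Z}_\ell)$, the quotient $G/G_0$ is a subquotient of $\mathrm{PGL}_2(\mathbb{Z}_\ell)/\mathrm{PSL}_2(\mathbb{Z}_\ell) \cong \mathbb{Z}/2\mathbb{Z}$; the question is therefore whether this subquotient is trivial, which I would settle by an explicit eigenvalue calculation with $\rho^*_\mathfrak{q} = \mathrm{Sym}^{2k-2} \otimes \det^{1-k}$ applied to a convenient representative such as $\mathrm{diag}(p,1)$, exploiting the control on powers of $p$ afforded by $\ell > 2k-2$.
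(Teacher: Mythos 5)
Your construction of $\Gamma^*_\mathfrak{q}$ as $\Gamma_\mathfrak{q}\cap\ker(\rho^*_\mathfrak{q})$ agrees with the paper's (which realizes it adelically as $B^\times\cap U$ with $U_\ell=\ker(\rho^*_\mathfrak{q}|_{R_\ell^\times})$, $U_p=B_p^\times$ and $U_r=R_r^\times$ elsewhere), and the routine verifications --- finite index, triviality of the restriction, $\mathbb{Z}[1/p]^\times\subset\Gamma^*_\mathfrak{q}$, and $\det(\Gamma^*_\mathfrak{q})\subseteq p^{\mathbb{Z}}$ --- are all correct. Your reduction of the remaining inclusion, via density of $\Gamma^0_\mathfrak{q}$ in $\mathrm{SL}_2(\mathbb{Z}_\ell)$, to the single question of whether $\rho^*_\mathfrak{q}(\pi)\in G_0=\rho^*_\mathfrak{q}(\mathrm{SL}_2(\mathbb{Z}_\ell))$ is a legitimate repackaging of the strong-approximation step the paper uses. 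So you have isolated the crux exactly.

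But the step you defer to an ``explicit eigenvalue calculation'' is precisely where the argument breaks, and the calculation goes the wrong way. One has $\rho^*_\mathfrak{q}(\pi)\in\rho^*_\mathfrak{q}(\mathrm{SL}_2(\mathbb{Z}_\ell))$ if and only if $\pi_\ell\in\ker(\rho^*_\mathfrak{q})\cdot\mathrm{SL}_2(\mathbb{Z}_\ell)$, i.e.\ if and only if $p=\det(\pi_\ell)\in\det(\ker\rho^*_\mathfrak{q})$. Now compute that kernel mod $\ell$: for semisimple $g$ with eigenvalues $a,b$, the operator $\mathrm{Sym}^{2k-2}(g)\otimes\det(g)^{1-k}$ has eigenvalues $(a/b)^i$ for $-(k-1)\le i\le k-1$, so in particular $(a/b)^{\pm1}$ (here $k\ge2$ is used), forcing $a=b$; and for non-semisimple $g$ the hypothesis $\ell>2k-2$ makes $\mathrm{Sym}^{2k-2}$ of a nontrivial unipotent nontrivial. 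Hence $\ker(\rho^*_\mathfrak{q})$ reduces mod $\ell$ into the scalars and $\det(\ker\rho^*_\mathfrak{q})=(\mathbb{Z}_\ell^\times)^2$. Thus $G/G_0$ is the full $\mathbb{Z}/2\mathbb{Z}$, and $\rho^*_\mathfrak{q}(\pi)\in G_0$ exactly when $p$ is a quadratic residue mod $\ell$; the hypothesis $\ell>2k-2$ works against you here, since it shrinks the kernel rather than enlarging it. Indeed the same computation shows that \emph{any} $\gamma\in\Gamma_\mathfrak{q}$ with $\rho^*_\mathfrak{q}(\gamma)=1$ has reduced norm in $p^{\mathbb{Z}}\cap(\mathbb{Z}_\ell^\times)^2$, so when $p$ is a nonresidue mod $\ell$ no subgroup of $\ker(\rho^*_\mathfrak{q}|_{\Gamma_\mathfrak{q}})$ can have determinant group $p^{\mathbb{Z}}$. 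You should know that the paper's own proof hinges on the identical point: it asserts the existence of an idele $x=(x_r)\in U$ of reduced norm $p$, whose $\ell$-component must be an element of $\ker(\rho^*_\mathfrak{q}|_{R_\ell^\times})$ of determinant $p$. So your proposal reproduces the paper's argument in different clothing, but neither version closes this step without either assuming $p\in(\mathbb{Z}_\ell^\times)^2$ or weakening the conclusion (e.g.\ requiring only that $\rho^*_\mathfrak{q}(\Gamma^*_\mathfrak{q})$ fix the particular class $\varpi_{\sigma,\mathfrak{q},h}$, whose stabilizer is strictly larger than the kernel).
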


\begin{proof}
Define a subgroup $U=\prod U_r\subset \hat{B}^\times$ by
\begin{displaymath}
U_r=\left\{\begin{array}{ll}
\mathrm{Ker}\big(\rho_\mathfrak{q}^*: R_\ell^\times\map{}
\mathrm{Aut}(\mathcal{A}_\Lambda(\mathrm{red}_\mathfrak{q}(E_1)))\big) &
\mathrm{if\ } r=\ell \\
B_p^\times & \mathrm{if\ }r=p\\
R_r^\times &\mathrm{else}
\end{array}\right.
\end{displaymath}
and let $\Gamma_\mathfrak{q}^*=B^\times\cap U\subset \hat{B}^\times$.
Then $\Gamma_\mathfrak{q}^*\subset\Gamma_\mathfrak{q}$ is
 exactly the kernel of 
$\rho_\mathfrak{q}^*$ restricted to $\Gamma_\mathfrak{q}$.  We must show that
$\Gamma_\mathfrak{q}^*$ contains an element of norm $p$.  By 
\cite[Theoreme III.4.1]{vigneras} there is a $b_0\in B^\times$
of norm $p$.  Let $x=(x_r)\in U$ be an element of norm 
$p\in\hat{\mathbb{Q}}^\times$.
By strong approximation \cite[Theoreme III.4.3]{vigneras}
the norm one element $b_0^{-1}x\in\hat{B}^\times$
may be written in the form $b_1 y u=b_0^{-1}x$ for some norm one 
elements $b_1\in B^\times$, $y\in B_p^\times $, and $u\in U$. 
Then $b_0 b_1$ has norm $p$ and is contained in $\Gamma_\mathfrak{q}^*$.
\end{proof}

\begin{Prop}\label{tree surjection}
For each $\mathfrak{q}\in\mathfrak{Q}$ let $\Gamma_\mathfrak{q}^*$
be as in Lemma \ref{smaller subgroups}, and for each 
$(\sigma,\mathfrak{q})\in\mathcal{S}\times\mathfrak{Q}$ set
\begin{displaymath}
\Gamma_{\sigma,\mathfrak{q}}^*=
\beta_{\sigma,\mathfrak{q}}\Gamma_\mathfrak{q}^* 
\beta_{\sigma,\mathfrak{q}}^{-1}\subset 
\mathrm{GL}_2(\mathbb{Q}_p).
\end{displaymath}
If $\mathcal{S}$ is chaotic then the quotient map
$\mathcal{T}\map{}\prod_{(\sigma,\mathfrak{q})\in\mathcal{S}\times\mathfrak{Q}}
\mathcal{T}/\Gamma^*_{\sigma,\mathfrak{q}}$
is surjective.  
\end{Prop}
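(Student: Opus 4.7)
\emph{Plan.} The plan is to rephrase the surjectivity problem adelically in terms of each definite quaternion algebra $B_\mathfrak{q}$, and then to combine strong approximation with the chaos hypothesis. For each $\mathfrak{q}\in\mathfrak{Q}$, pick a compact open subgroup $U_\mathfrak{q}^* \subset \hat{R}^{(p)\times}$ of finite index such that $\Gamma_\mathfrak{q}^* = B_\mathfrak{q}^\times \cap (B_{\mathfrak{q},p}^\times \cdot U_\mathfrak{q}^*)$; such a $U_\mathfrak{q}^*$ exists because $\Gamma_\mathfrak{q}^*$ has finite index in $\Gamma_\mathfrak{q} = R[1/p]^\times$. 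Strong approximation for the norm-one group $B_\mathfrak{q}^{(1)}$ at the split place $p$ identifies $\mathcal{T}/\Gamma_\mathfrak{q}^*$ with a distinguished component of the finite adelic double-coset set
$$X_\mathfrak{q} \;:=\; B_\mathfrak{q}^\times\,\backslash\,\hat{B}_\mathfrak{q}^\times\,/\,\bigl((\mathbb{Q}_p^\times \mathrm{GL}_2(\mathbb{Z}_p))\cdot U_\mathfrak{q}^*\bigr).$$
Using the decomposition $\beta_{\sigma,\mathfrak{q}} = j(\hat{\sigma}_p)\cdot b_{\sigma,\mathfrak{q},p}$ with $b_{\sigma,\mathfrak{q}}\in B_\mathfrak{q}^\times$, one absorbs $b_{\sigma,\mathfrak{q}}$ into the left $B_\mathfrak{q}^\times$-action to see that the map $g\mapsto g\beta_{\sigma,\mathfrak{q}}\bmod \Gamma_\mathfrak{q}^*$ coincides, up to a $\sigma$-dependent twist by the finite group $U_\mathfrak{q}^{(p)}/U_\mathfrak{q}^{*(p)}$, with the adelic map $g\mapsto (g;1^{(p)})\cdot j(\hat{\sigma})\in X_\mathfrak{q}$.

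This reformulation converts the desired surjectivity into the surjectivity of the joint adelic map
$$\mathrm{GL}_2(\mathbb{Q}_p)\;\longrightarrow\;\prod_{(\sigma,\mathfrak{q})\in\mathcal{S}\times\mathfrak{Q}} X_\mathfrak{q},\qquad g\;\longmapsto\;\bigl((g;1^{(p)})\,j(\hat{\sigma})\bigr)_{(\sigma,\mathfrak{q})},$$
onto the $\sigma$-determined component of each factor. For a single pair $(\sigma,\mathfrak{q})$ this is exactly the surjectivity provided by strong approximation for $B_\mathfrak{q}^{(1)}$ at $p$, the same ingredient that appeared in the proof of Lemma \ref{smaller subgroups}. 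The chaos hypothesis is what upgrades the individual surjectivity to a joint one: for distinct $\sigma,\tau\in\mathcal{S}$, the restriction $\sigma\tau^{-1}|_{H[p^\infty]}$ is not the Artin symbol of any idele with trivial $p$-component, so under the embedding $j:\hat{K}\hookrightarrow \hat{B}_\mathfrak{q}$ the element $j(\hat{\sigma}_p\hat{\tau}_p^{-1})\in\mathrm{GL}_2(\mathbb{Q}_p)$ translates the Bruhat--Tits tree by an unbounded amount as $\sigma,\tau$ range over $\mathcal{S}$. This unbounded tree separation decorrelates the constraints on $g$ coming from distinct $\sigma$'s at a fixed $\mathfrak{q}$. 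Independence across different $\mathfrak{q}$'s is automatic, since the quaternion algebras $B_\mathfrak{q}$ are pairwise non-isomorphic and strong approximation applies to each one separately.

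The principal obstacle is converting the chaos-produced unbounded tree separation into an \emph{actual} simultaneous realization, i.e., producing a single $g\in\mathrm{GL}_2(\mathbb{Q}_p)$ that hits arbitrary prescribed targets in each $X_\mathfrak{q}$ at once. The argument is a fusion of the tree surjection and strong approximation: one uses chaos to arrange that the various constraints imposed by the $(\sigma,\mathfrak{q})$ act on essentially disjoint portions of $\mathcal{T}$, and then applies strong approximation within each portion. This is a direct adaptation of the tree-surjection argument of Cornut \cite[\S 3]{cornut02} to the multi-$\mathfrak{q}$ setting; the extra bookkeeping of the $\ell$-level twist encoded in $U_\mathfrak{q}^{(p)}/U_\mathfrak{q}^{*(p)}$ introduces some technical complication but is controlled by the $\rho_\mathfrak{q}^*$-triviality from Lemma \ref{smaller subgroups}.
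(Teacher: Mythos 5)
Your reduction of the statement to a joint surjectivity assertion for a map from $\mathrm{GL}_2(\mathbb{Q}_p)$ onto a product of quotients is harmless, but the heart of the proposition is precisely the step you defer to your final paragraph, and the mechanism you propose for it does not exist. For a single pair $(\sigma,\mathfrak{q})$ the map $\mathcal{T}\to\mathcal{T}/\Gamma^*_{\sigma,\mathfrak{q}}$ is a quotient map, so surjectivity there is vacuous; all of the content lies in hitting independently prescribed targets in every factor \emph{simultaneously}. That is not a consequence of strong approximation: strong approximation controls the adelic points of a single group $B_\mathfrak{q}^{(1)}$, whereas here one must understand the image of the diagonal copy of $\mathrm{PSL}_2(\mathbb{Q}_p)$ inside $\prod_{(\sigma,\mathfrak{q})}\mathrm{PSL}_2(\mathbb{Q}_p)$ modulo the product of the lattices, which is a genuinely deeper problem. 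For the same reason, the claim that independence across distinct $\mathfrak{q}$ is ``automatic'' because the algebras $B_\mathfrak{q}$ are non-isomorphic is false; non-isomorphy of the algebras gives you non-commensurability of the lattices, not joint surjectivity.

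The two missing ingredients, which constitute the paper's entire proof, are: (i) the images $\tilde{\Gamma}^{*,1}_{\sigma,\mathfrak{q}}$ of the $\Gamma^*_{\sigma,\mathfrak{q}}$ in $\mathrm{PSL}_2(\mathbb{Q}_p)$ are discrete and cocompact, and the chaos hypothesis makes them \emph{pairwise non-commensurable} (Cornut, Proposition 3.7). This, not ``unbounded tree translation,'' is the correct formulation of what chaos buys: $\beta_{\sigma,\mathfrak{q}}\beta_{\tau,\mathfrak{q}}^{-1}$ lies in the commensurator of $\tilde{\Gamma}^{*,1}_{\mathfrak{q}}$, namely the image of $B_\mathfrak{q}^\times$, exactly when $\sigma\tau^{-1}$ violates the chaotic condition. (ii) Vatsal's lemma, which is an application of Ratner's theorem on $p$-adic unipotent flows: for pairwise non-commensurable discrete cocompact subgroups, the natural map from $\mathrm{PSL}_2(\mathbb{Q}_p)$ to the product of the double coset spaces $\mathrm{PSL}_2(\mathbb{Z}_p)\backslash\mathrm{PSL}_2(\mathbb{Q}_p)/\tilde{\Gamma}^{*,1}_{\sigma,\mathfrak{q}}$ is surjective. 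This ergodic-theoretic input cannot be replaced by ``decorrelation'' plus strong approximation applied ``within each portion''; the tree-surjection argument of Cornut that you invoke itself rests on Vatsal's Ratner-based lemma, so your proposal is circular where it is not simply missing the argument.
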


\begin{proof}
Let $\tilde{\Gamma}^*_{\sigma,\mathfrak{q}}$ be the image of 
$\Gamma_{\sigma,\mathfrak{q}}^*$ in $\mathrm{PGL}_2(\mathbb{Q}_p)$ and let
$\tilde{\Gamma}^{*,1}_{\sigma,\mathfrak{q}}$ the intersection of 
$\tilde{\Gamma}^*_{\sigma,\mathfrak{q}}$ with
$\mathrm{PSL}_2(\mathbb{Q}_p)$.  Then 
$\tilde{\Gamma}^{*,1}_{\sigma,\mathfrak{q}}$
is discrete and cocompact by \cite[p.104]{vigneras}, 
and these subgroups are pairwise 
non-commensurable as $(\sigma,\mathfrak{q})$ varies by
\cite[Proposition 3.7]{cornut02}.
By Vatsal's application of a theorem of Ratner (see
\cite[Proposition 3.11]{cornut02} or \cite[Lemma 5.10]{vatsal02}),
the natural map
\begin{displaymath}
\mathrm{PSL}_2(\mathbb{Q}_p) \map{}
\prod_{(\sigma,\mathfrak{q})\in\mathcal{S}\times\mathfrak{Q}}
\mathrm{PSL}_2(\mathbb{Z}_p)\backslash \mathrm{PSL}_2(\mathbb{Q}_p)/
\tilde{\Gamma}^{*,1}_{\sigma,\mathfrak{q}}
\end{displaymath}
is surjective, and the proposition follows as in 
\cite[Proposition 3.4]{cornut02}.
\end{proof}

%%%%%%%%%%%%%%%%%%%%%%%%%%%%%%%%%%%%%%%%%%%%%%%%%%%%%%%%%%%%%%%

\subsection{Surjectivity of the reduction map}
\label{chaotic theorem}

%%%%%%%%%%%%%%%%%%%%%%%%%%%%%%%%%%%%%%%%%%%%%%%%%%%%%%%%%%%%%

Assume  $\mathcal{S}\subset\mathrm{Gal}(\mathbb{Q}^\mathrm{al}/K)$ is finite
and chaotic.

\begin{Prop}\label{reduction I}
Fix $(\sigma',\mathfrak{q}')\in\mathcal{S}\times\mathfrak{Q}$,
$\gamma_0,\gamma_1\in\Gamma_\mathfrak{q}$, and $h\in\mathcal{T}$.  
There exist $g_0,g_1\in\mathcal{T}$ such that
\begin{equation*}
\deg(g_0)^{1-k}\mathrm{Red}_{\mathcal{S},\mathfrak{Q}}(g_0)
-
\deg(g_1)^{1-k}\mathrm{Red}_{\mathcal{S},\mathfrak{Q}}(g_1)
\in \bigoplus_{(\sigma,\mathfrak{q})\in\mathcal{S}\times\mathfrak{Q}}
\mathcal{Z}_\mathfrak{q}(M)
\end{equation*}
has trivial components except at the summand 
$(\sigma,\mathfrak{q})=(\sigma',\mathfrak{q}')$,
at which the component is equal to 
\begin{displaymath}
F^*_{N,M}\big(\mathrm{red}_\mathfrak{q}(E_h),
\mathrm{red}_\mathfrak{q}(\mathbf{C}_h),
\mathrm{red}_\mathfrak{q}(f_h)\big(\rho^*_\mathfrak{q}
(\gamma_0)\varpi_{\sigma,\mathfrak{q},h}-
\rho^*_\mathfrak{q}(\gamma_1)\varpi_{\sigma,\mathfrak{q},h}\big)\big),
\end{displaymath}
where $\varpi_{\sigma,\mathfrak{q},h}\in\mathcal{A}_\Lambda
(\mathrm{red}_{\mathfrak{q}}(E_1))$ 
is the element of Corollary \ref{second action}.
\end{Prop}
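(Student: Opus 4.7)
The plan is to realize $g_0,g_1\in\mathcal{T}$ via the product-surjectivity of Proposition \ref{tree surjection}, and then to compare the simultaneous reductions place-by-place using Corollary \ref{second action}. Concretely, I fix an auxiliary representative $h_{\sigma,\mathfrak{q}}\in\mathcal{T}$ for each $(\sigma,\mathfrak{q})\in\mathcal{S}\times\mathfrak{Q}$ with $(\sigma,\mathfrak{q})\neq(\sigma',\mathfrak{q}')$. Since $\mathcal{S}$ is chaotic, Proposition \ref{tree surjection} allows me to choose $g_0\in\mathcal{T}$ whose image in $\mathcal{T}/\Gamma^*_{\sigma',\mathfrak{q}'}$ is the class of $h\gamma_0\beta_{\sigma',\mathfrak{q}'}^{-1}$ and whose image in $\mathcal{T}/\Gamma^*_{\sigma,\mathfrak{q}}$ is the class of $h_{\sigma,\mathfrak{q}}$ at every other place; similarly $g_1$ is chosen with $\gamma_1$ in place of $\gamma_0$ at $(\sigma',\mathfrak{q}')$ and the same $h_{\sigma,\mathfrak{q}}$ elsewhere.

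Next, at any auxiliary place $(\sigma,\mathfrak{q})\neq(\sigma',\mathfrak{q}')$, by construction both $g_0$ and $g_1$ lie in the single coset $h_{\sigma,\mathfrak{q}}\Gamma^*_\mathfrak{q}\beta_{\sigma,\mathfrak{q}}^{-1}\subset\mathcal{T}$, so I can write $g_i\beta_{\sigma,\mathfrak{q}}=h_{\sigma,\mathfrak{q}}\gamma_i^{(\sigma,\mathfrak{q})}$ with $\gamma_i^{(\sigma,\mathfrak{q})}\in\Gamma^*_\mathfrak{q}$. Applying Corollary \ref{second action} together with the triviality of $\rho^*_\mathfrak{q}$ on $\Gamma^*_\mathfrak{q}$ supplied by Lemma \ref{smaller subgroups} gives
$$
\deg(g_i)^{1-k}\mathrm{Red}_{\sigma,\mathfrak{q}}(g_i)=F^*_{N,M}\bigl(\mathrm{red}_\mathfrak{q}(E_{h_{\sigma,\mathfrak{q}}}),\mathrm{red}_\mathfrak{q}(\mathbf{C}_{h_{\sigma,\mathfrak{q}}}),\mathrm{red}_\mathfrak{q}(f_{h_{\sigma,\mathfrak{q}}})(\varpi_{\sigma,\mathfrak{q},h_{\sigma,\mathfrak{q}}})\bigr),
$$
the right-hand side being independent of $i\in\{0,1\}$; hence the difference $\deg(g_0)^{1-k}\mathrm{Red}_{\sigma,\mathfrak{q}}(g_0)-\deg(g_1)^{1-k}\mathrm{Red}_{\sigma,\mathfrak{q}}(g_1)$ vanishes at every such place.

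At the distinguished place, $g_i\beta_{\sigma',\mathfrak{q}'}$ agrees with $h\gamma_i$ modulo right multiplication by an element of $\Gamma^*_{\mathfrak{q}'}$, and a second application of Lemma \ref{smaller subgroups} shows this residual ambiguity is invisible to $\rho^*_{\mathfrak{q}'}$. Corollary \ref{second action} then delivers
$$
\deg(g_i)^{1-k}\mathrm{Red}_{\sigma',\mathfrak{q}'}(g_i)=F^*_{N,M}\bigl(\mathrm{red}_{\mathfrak{q}'}(E_h),\mathrm{red}_{\mathfrak{q}'}(\mathbf{C}_h),\mathrm{red}_{\mathfrak{q}'}(f_h)(\rho^*_{\mathfrak{q}'}(\gamma_i)\varpi_{\sigma',\mathfrak{q}',h})\bigr),
$$
and subtracting the $i=1$ equation from the $i=0$ equation, using the $\Lambda$-linearity of $F^*_{N,M}$ in its third argument, yields exactly the claimed expression. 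The only real obstacle is the simultaneous selection of $g_0$ and $g_1$ with tightly prescribed images at every place: this is precisely what the chaoticity of $\mathcal{S}$ buys through Proposition \ref{tree surjection}, and once that is in hand the rest is a bookkeeping application of the explicit formula in Corollary \ref{second action} combined with the kernel property defining $\Gamma^*_\mathfrak{q}$.
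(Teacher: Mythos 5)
Your proposal is correct and follows essentially the same route as the paper: choose $g_0,g_1$ via Proposition \ref{tree surjection} with prescribed images in each $\mathcal{T}/\Gamma^*_{\sigma,\mathfrak{q}}$, then invoke Corollary \ref{second action} together with the triviality of $\rho^*_\mathfrak{q}$ on $\Gamma^*_\mathfrak{q}$ to kill the off-diagonal components and isolate the $\gamma_0,\gamma_1$ contribution at $(\sigma',\mathfrak{q}')$. The only cosmetic difference is that the paper simply prescribes the image $h\beta_{\sigma,\mathfrak{q}}^{-1}\Gamma^*_{\sigma,\mathfrak{q}}$ at every auxiliary place rather than introducing separate representatives $h_{\sigma,\mathfrak{q}}$.
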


\begin{proof}
For each $i\in\{0,1\}$ Proposition \ref{tree surjection} allows us to choose a 
$g_i\in\mathcal{T}$ such that the reduction map
$\mathcal{T}\map{} \mathcal{T}/\Gamma^*_{\sigma,\mathfrak{q}}$ takes
\begin{displaymath}
g_i\mapsto\left\{
\begin{array}{ll}
h\gamma_i\beta_{\sigma,\mathfrak{q}}^{-1}\Gamma^*_{\sigma,\mathfrak{q}} 
= h\gamma_i\Gamma^*_{\mathfrak{q}}\beta^{-1}_{\sigma,\mathfrak{q}}
&\mathrm{if\ }(\sigma,\mathfrak{q})=
(\sigma',\mathfrak{q}') \\
h\beta_{\sigma,\mathfrak{q}}^{-1}\Gamma^*_{\sigma,\mathfrak{q}} =
h\Gamma^*_{\mathfrak{q}}\beta^{-1}_{\sigma,\mathfrak{q}}
& \mathrm{if\ } (\sigma,\mathfrak{q})\not=(\sigma',\mathfrak{q}')
\end{array}\right.
\end{displaymath}
for every $(\sigma,\mathfrak{q})\in\mathcal{S}\times\mathfrak{Q}$.
By Corollary \ref{second action} we have
\begin{displaymath}
\mathrm{red}_\mathfrak{q}(E_{g_0}^\sigma, \mathbf{C}_{g_0}^\sigma, 
\deg(g_0)^{1-k}\Theta^\sigma_{g_0})
\cong
\mathrm{red}_\mathfrak{q}(E_{g_1}^\sigma, \mathbf{C}_{g_1}^\sigma, 
\deg(g_1)^{1-k}\Theta^\sigma_{g_1})
\end{displaymath}
as a $\Lambda$-augmented $\Gamma_0(\mathbf{N})$ structure over 
$\mathbb{F}^\mathrm{al}_\mathfrak{q}$
whenever $(\sigma,\mathfrak{q})\not= (\sigma',\mathfrak{q}')$, while 
\begin{displaymath}
\mathrm{red}_{\mathfrak{q}}(E_{g_i}^{\sigma}, 
\mathbf{C}_{g_i}^{\sigma}, \deg(g_i)^{1-k}\Theta^{\sigma}_{g_i})
\cong \big(\mathrm{red}_{\mathfrak{q}}(E_h),\mathrm{red}_{\mathfrak{q}}(\mathbf{C}_h),
\mathrm{red}_{\mathfrak{q}}(f_h)(\rho^*_{\mathfrak{q}}(\gamma_i)
\varpi_{\sigma,\mathfrak{q},h})\big)
\end{displaymath}
if $(\sigma,\mathfrak{q})=(\sigma',\mathfrak{q}')$.
The proposition  is now immediate from the definition (\ref{SRM}) of  
$\mathrm{Red}_{\mathcal{S},\mathfrak{Q}}$.
\end{proof}

\begin{Lem}\label{irreducible}
Fix $\mathfrak{q}\in\mathfrak{Q}$ and suppose $\Lambda=\mathbb{Z}/\ell\mathbb{Z}$. 
Then $\mathcal{A}_\Lambda(\mathrm{red}_\mathfrak{q}(E_1))$
has no proper, nonzero $\Lambda$-submodules 
which are stable under $\rho^*_\mathfrak{q}(\Gamma_\mathfrak{q})$.
\end{Lem}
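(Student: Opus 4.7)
My plan is to show that $\rho^*_\mathfrak{q}(\Gamma_\mathfrak{q})$ contains the standard symmetric-power action of $\mathrm{SL}_2(\mathbb{F}_\ell)$, and then to invoke the classical irreducibility of $\mathrm{Sym}^{2k-2}(\mathbb{F}_\ell^2)$ as an $\mathrm{SL}_2(\mathbb{F}_\ell)$-module under the hypothesis $\ell > 2k-2$. Since $\ell \nmid \mathbf{N}$, the Eichler order $R$ is maximal at $\ell$, so the left action of $R_\ell^\times$ on $\mathrm{Ta}_\ell(\mathrm{red}_\mathfrak{q}(E_1))$ identifies $R_\ell^\times$ with $\mathrm{GL}_2(\mathbb{Z}_\ell)$ acting in the tautological way. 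Under this identification $\rho_\mathfrak{q}$ is the reduction mod $\ell$ of $\mathrm{Sym}^{2k-2}$ applied to the standard representation, and $\rho^*_\mathfrak{q} = \rho_\mathfrak{q} \otimes \det^{1-k}$ coincides with $\rho_\mathfrak{q}$ on the norm-one subgroup $\Gamma_\mathfrak{q}^1 := R[1/p]^1 \subset \Gamma_\mathfrak{q}$, since $\det$ is trivial there.

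Next I would invoke strong approximation for the simply connected semisimple $\mathbb{Q}$-group $B^1 = \mathrm{SL}_1(B)$. Although $B$ is ramified at $\infty$, the factor $B^1(\mathbb{Q}_p) \cong \mathrm{SL}_2(\mathbb{Q}_p)$ is non-compact because $p$ splits in $B$, so strong approximation (cf. \cite[Th\'eor\`eme III.4.3]{vigneras}) with allowable set $S = \{p\}$ says that $B^1(\mathbb{Q})$ is dense in the restricted product $\prod'_{v \ne p} B^1(\mathbb{Q}_v)$ taken with respect to $\{R_v^1\}$. Intersecting with the open compact $\prod_{v \ne p,\infty} R_v^1$ and projecting to the $\ell$-component shows that $\Gamma_\mathfrak{q}^1$ has dense image in $R_\ell^1 = \mathrm{SL}_2(\mathbb{Z}_\ell)$. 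Because reduction mod $\ell$ is continuous and $\mathrm{SL}_2(\mathbb{F}_\ell)$ is discrete, the image of $\Gamma_\mathfrak{q}^1$ in $\mathrm{SL}_2(\mathbb{F}_\ell)$ is already all of $\mathrm{SL}_2(\mathbb{F}_\ell)$.

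Combining these steps, any $\rho^*_\mathfrak{q}(\Gamma_\mathfrak{q})$-stable $\mathbb{F}_\ell$-subspace of $\mathcal{A}_\Lambda(\mathrm{red}_\mathfrak{q}(E_1))$ is, via the action of $\Gamma_\mathfrak{q}^1$, automatically stable under the standard symmetric-power action of $\mathrm{SL}_2(\mathbb{F}_\ell)$ on $\mathrm{Sym}^{2k-2}(\mathbb{F}_\ell^2)$. The hypothesis $\ell > 2k-2$ means that $2k-2$ lies in the range $\{0,1,\ldots,\ell-1\}$, and it is a classical fact from the modular representation theory of $\mathrm{SL}_2$ that $\mathrm{Sym}^{d}(\mathbb{F}_\ell^2)$ is an absolutely irreducible $\mathrm{SL}_2(\mathbb{F}_\ell)$-module for every such $d$. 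Thus the only stable subspaces are $0$ and the whole module, which is the conclusion of the lemma. The only delicate point is the choice of place in strong approximation: since $B$ is definite, the usual choice $S = \{\infty\}$ is unavailable, but the split place $p$ (which is not the reduction prime) furnishes the required non-compact local factor.
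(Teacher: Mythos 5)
Your proof is correct and follows essentially the same route as the paper: identify $R_\ell^\times$ with $\mathrm{GL}_2(\mathbb{Z}_\ell)$, use strong approximation for the norm-one group (with the split place $p$ supplying the non-compact factor, since $B$ is definite) to show $\Gamma_\mathfrak{q}^1$ surjects onto $\mathrm{SL}_2(\Lambda)$, and conclude by the irreducibility of $\mathrm{Sym}^{2k-2}\Lambda^2$ for $\ell>2k-2$. The paper's proof is terser ("one may show" the density), so your explicit justification of the strong approximation step is a welcome elaboration rather than a deviation.
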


\begin{proof}
Fix a $\mathbb{Z}_\ell$-basis for the $\ell$-adic Tate module of 
$\mathrm{red}_\mathfrak{q}(E_1)$, so that $R_\ell^\times$ is identified with 
$\mathrm{GL}_2(\mathbb{Z}_\ell)$.
Let $\Gamma_\mathfrak{q}^1$ and $R_\ell^{\times,1}$ denote the norm one elements
of $\Gamma_\mathfrak{q}$ and $R_\ell^\times$, respectively.  
Then $\mathcal{A}_\Lambda(\mathrm{red}_\mathfrak{q}(E_1))$ is 
identified with $\mathrm{Sym}^{2k-2}\Lambda^2$ and the action of 
$\rho_\mathfrak{q}^*$ restricted to $\Gamma_\mathfrak{q}^1$ is through
\begin{displaymath}\Gamma_\mathfrak{q}^1\map{}
R_\ell^{\times,1}\map{}\mathrm{SL}_2(\mathbb{Z}_\ell)
\map{}\mathrm{SL}_2(\Lambda).\end{displaymath}
Using strong approximation \cite[Theoreme III.4.3]{vigneras}
one may show that the first arrow has dense image, and so the
composition is surjective.  
By the assumption $\ell>2k-2$,
$\mathrm{Sym}^{2k-2}\Lambda^2$ has no proper, nonzero submodules stable
under the action of $\mathrm{SL}_2(\Lambda)$.
\end{proof}

\begin{Thm}\label{chaos}
Let $\mathcal{S}\subset\mathrm{Gal}(\mathbb{Q}^\mathrm{al}/K)$ be finite and chaotic,
and suppose $\Lambda=\mathbb{Z}/\ell\mathbb{Z}$.  Then
the simultaneous reduction map (\ref{SRM}) is surjective.
\end{Thm}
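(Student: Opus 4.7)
The plan is to reduce surjectivity to each individual $(\sigma',\mathfrak{q}')$-summand, and then on each summand to generate all $\Delta$-invariant supersingular cycles by combining Proposition~\ref{reduction I} with the irreducibility Lemma~\ref{irreducible} and a nonvanishing check on the element $\varpi_{\sigma',\mathfrak{q},h}$.

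First, Proposition~\ref{reduction I} shows that for any $(\sigma',\mathfrak{q}') \in \mathcal{S}\times\mathfrak{Q}$ the image of $\mathrm{Red}_{\mathcal{S},\mathfrak{Q}}$ contains elements whose only nonzero component lies in the $(\sigma',\mathfrak{q}')$-summand. It therefore suffices to fix such a pair, abbreviate $\mathfrak{q}=\mathfrak{q}'$, and show that the projection of the image onto $\mathcal{Z}_{\mathfrak{q}}(M)$ is surjective. Proposition~\ref{reduction I} provides, for each $h \in \mathcal{T}$, elements of the projection of the form $F^*_{N,M}\bigl(\mathrm{red}_\mathfrak{q}(E_h),\mathrm{red}_\mathfrak{q}(\mathbf{C}_h),\mathrm{red}_\mathfrak{q}(f_h)(\eta)\bigr)$ where $\eta$ ranges over the $\Lambda$-submodule $D_h \subset \mathcal{A}_\Lambda(\mathrm{red}_\mathfrak{q}(E_1))$ generated by $\rho_\mathfrak{q}^*(\gamma_0)\varpi - \rho_\mathfrak{q}^*(\gamma_1)\varpi$ for $\gamma_0,\gamma_1 \in \Gamma_\mathfrak{q}$, with $\varpi = \varpi_{\sigma',\mathfrak{q},h}$.

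One checks directly that $D_h$ is $\Gamma_\mathfrak{q}$-stable (applying $\rho^*_\mathfrak{q}(\gamma)$ to a generator yields another generator), so Lemma~\ref{irreducible} forces $D_h$ to be either $0$ or all of $\mathcal{A}_\Lambda(\mathrm{red}_\mathfrak{q}(E_1))$. The first alternative means $\varpi$ is $\rho^*_\mathfrak{q}(\Gamma_\mathfrak{q})$-fixed; the strong approximation step in the proof of Lemma~\ref{irreducible} then forces $\varpi$ to be fixed by all of $\mathrm{SL}_2(\Lambda)$ acting on $\mathrm{Sym}^{2k-2}\Lambda^2$, and since for $\ell > 2k-2$ this irreducible module has dimension $2k-1 > 1$, this gives $\varpi = 0$. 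It therefore suffices to check $\varpi_{\sigma',\mathfrak{q},h} \neq 0$. By the explicit formula derived in the proof of Corollary~\ref{second action}, $\varpi$ equals a unit in $\Lambda$ times $\rho_\mathfrak{q}^*(\alpha^{-1}_{\sigma',\mathfrak{q}})\mathrm{red}_\mathfrak{q}(\Theta_1)$, and since $\rho_\mathfrak{q}^*(\alpha^{-1})$ is invertible, nonvanishing reduces to $\mathrm{red}_\mathfrak{q}(\Theta_1) \neq 0$. The element $\vartheta_1$ corresponds under the Weil pairing to $\sqrt{D} \in \mathrm{End}(E_1)$, which acts as a nonzero traceless matrix on $E_1[\ell]$ because $\ell \nmid \mathrm{disc}(K)$, and its $(k-1)$-st symmetric power remains nonzero using $\ell > 2k-2$ by a direct basis computation.

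Finally, since $\deg(f_h)$ is a power of $p \neq \ell$, the map $\mathrm{red}_\mathfrak{q}(f_h)$ is an isomorphism on $\mathcal{A}_\Lambda$, so the projection of the image to $\mathcal{Z}_\mathfrak{q}(M)$ contains every $F^*_{N,M}(\mathrm{red}_\mathfrak{q}(E_h),\mathrm{red}_\mathfrak{q}(\mathbf{C}_h),\xi)$ with $\xi \in \mathcal{A}_\Lambda(\mathrm{red}_\mathfrak{q}(E_h))$ arbitrary. As $h$ varies, the bijection $\mathcal{T}/\Gamma_\mathfrak{q} \cong Z_0(\mathbf{N})_\mathfrak{q}$ of \cite[\S 2.3]{cornut02} lets $(\mathrm{red}_\mathfrak{q}(E_h),\mathrm{red}_\mathfrak{q}(\mathbf{C}_h))$ range over every supersingular $\Gamma_0(\mathbf{N})$-structure over $\mathbb{F}_\mathfrak{q}^{\mathrm{al}}$. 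For each such structure, $F^*_{N,M}(E,\mathbf{C},-)$ is the $\Delta$-averaging map over the fiber of $F_{N,M}$, which surjects onto the corresponding summand of $\Delta$-invariants because $\ell \nmid \varphi(N) = |\Delta|$; summing over all supersingular $\Delta$-orbits exhausts $\mathcal{Z}_\mathfrak{q}(M)$. The main obstacle is the combination of irreducibility for $\mathcal{A}_\Lambda$ and nonvanishing of $\varpi_{\sigma',\mathfrak{q},h}$, which together consume the hypotheses $\ell \nmid p\cdot\varphi(N)\cdot\mathrm{disc}(K)$ and $\ell > 2k-2$; past these the argument is purely formal.
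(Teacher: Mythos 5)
Your argument is correct and follows the paper's proof in all essentials: reduce to a single $(\sigma',\mathfrak{q}')$-summand via Proposition~\ref{reduction I}, use Lemma~\ref{irreducible} together with the nonvanishing of $\varpi_{\sigma',\mathfrak{q}',h}$ to generate all of $\mathcal{A}_\Lambda(\mathrm{red}_{\mathfrak{q}}(E_h))$, and then exhaust $\mathcal{Z}_{\mathfrak{q}}(M)$ by letting $h$ run over $\mathcal{T}/\Gamma_{\mathfrak{q}}\cong Z_0(\mathbf{N})_{\mathfrak{q}}$. The only (cosmetic) difference is that you package the irreducibility step as ``the submodule generated by all differences $\rho^*_{\mathfrak{q}}(\gamma_0)\varpi-\rho^*_{\mathfrak{q}}(\gamma_1)\varpi$ is $\Gamma_{\mathfrak{q}}$-stable, hence zero or everything,'' whereas the paper first picks one $\gamma_1$ with $\rho^*_{\mathfrak{q}}(\gamma_1)\varpi-\varpi\neq 0$ and then translates it by finitely many $\gamma^{(i)}$; both are the same use of Lemma~\ref{irreducible}.
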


\begin{proof}
Fix $(\sigma',\mathfrak{q}')\in \mathcal{S}\times\mathfrak{Q}$ 
and a supersingular point $z\in Z_0(\mathbf{N})_{\mathfrak{q}'}$.
Let $Z\subset Z_1(N,M)_{\mathfrak{q}'}$ be the set of closed points lying
above $z$.  We will show that the image of (\ref{SRM})
contains the submodule
\begin{equation}\label{chaos submodule}
\mathcal{A}_\Lambda^\circ(Z;Y_1(N,M)_{/\mathbb{F}_\mathfrak{q}'})^\Delta
\subset
\bigoplus_{(\sigma, \mathfrak{q})\in \mathcal{S}\times\mathfrak{Q}}
\mathcal{A}_\Lambda^\circ(Z_1(N,M)_\mathfrak{q}; 
Y_1(N,M)_{/\mathbb{F}_\mathfrak{q}})^\Delta
\end{equation}
supported in the $(\sigma',\mathfrak{q}')$ component.
The parametrization \cite[\S 2.3]{cornut02} shows that
the map $\mathcal{T}\mapsto Z_0(\mathbf{N})_{\mathfrak{q}'}$ defined by
$h\mapsto \mathrm{red}_{\mathfrak{q}'}(E_h,\mathbf{C}_h)$ establishes a bijection
$\mathcal{T}/\Gamma_{\mathfrak{q}'}\cong Z_0(\mathbf{N})_{\mathfrak{q}'}.$
Thus we may fix an $h\in\mathcal{T}$ such that the supersingular 
$\Gamma_0(\mathbf{N})$ structure
$\mathrm{red}_{\mathfrak{q}'}(E_h,\mathbf{C}_h)$ corresponds to the point $z$.
For any $g\in\mathcal{T}$, $\mathrm{red}_{\mathfrak{q}'}(\Theta_g^{\sigma'})\not=0$ 
(from the construction one sees that
$\Theta_1\not=0$, and $\ell\not=p$ implies that
$f_g:\mathcal{A}_\Lambda(E_1)\map{}\mathcal{A}_\Lambda(E_g)$
is an isomorphism).  It follows that $\varpi_{\sigma',\mathfrak{q}',h}\not=0$.
By Lemma \ref{irreducible} we may choose a $\gamma_1\in\Gamma_{\mathfrak{q}'}$
such that 
\begin{displaymath}
\pi\stackrel{\mathrm{def}}{=}\rho_{\mathfrak{q}'}^*
(\gamma_1)\varpi_{\sigma',\mathfrak{q}',h}-
\varpi_{\sigma',\mathfrak{q}',h}\in 
\mathcal{A}_\Lambda(\mathrm{red}_{\mathfrak{q}'}(E_1))
\end{displaymath}
is nonzero.  Again by Lemma \ref{irreducible}, choose
$\gamma^{(0)},\ldots,\gamma^{(n)}\in\Gamma_{\mathfrak{q}'}$ such that 
the elements
$\rho_{\mathfrak{q}'}^*(\gamma^{(i)})\pi$, $0\le i\le n$, generate 
$\mathcal{A}_\Lambda(\mathrm{red}_{\mathfrak{q}'}(E_1))$. 
Set $\gamma_0^{(i)}=\gamma^{(i)}\gamma_1$ and
let $g_0^{(i)}, g_1$ be as in Proposition \ref{reduction I},
so that
\begin{displaymath}
\deg(g_0^{(i)})^{1-k}
\mathrm{Red}_{\mathcal{S},\mathfrak{Q}}(g_0^{(i)})
-
\deg(g_1)^{1-k}
\mathrm{Red}_{\mathcal{S},\mathfrak{Q}}(g_1)
\end{displaymath}
has trivial components except at the summand 
$(\sigma,\mathfrak{q})=(\sigma',\mathfrak{q}')$, where the component is equal to 
\begin{equation}\label{chaos element}
F_{N,M}^*\big(\mathrm{red}_{\mathfrak{q}'}(E_h),
\mathrm{red}_{\mathfrak{q}'}(\mathbf{C}_h),
\mathrm{red}_{\mathfrak{q}'}(f_h)(\rho^*_{\mathfrak{q}'}(\gamma^{(i)})\pi)\big).
\end{equation}
As $i$ varies the elements 
$\mathrm{red}_{\mathfrak{q}'}(f_h)(\rho^*_{\mathfrak{q}'}(\gamma^{(i)})\pi)$
generate $\mathcal{A}_\Lambda(\mathrm{red}_{\mathfrak{q}'}(E_h))$, and the 
elements (\ref{chaos element}) generate the submodule
(\ref{chaos submodule}).
\end{proof}

%%%%%%%%%%%%%%%%%%%%%%%%%%%%%%%%%%%%%%%%%%%%%%%%%%%%%%%%%%%%%%%%%%
%%%%%%%%%%%%%%%%%%%%%%%%%%%%%%%%%%%%%%%%%%%%%%%%%%%%%%%%%%%%%%%%%%

%\appendix
\section{Augmented theorems of Deuring, Ihara, and Ribet}
\label{append}

%%%%%%%%%%%%%%%%%%%%%%%%%%%%%%%%%%%%%%%%%%%%%%%%%%%%%%%%%%%%%%%%
%%%%%%%%%%%%%%%%%%%%%%%%%%%%%%%%%%%%%%%%%%%%%%%%%%%%%%%%%%%%%%%%%%

Let $q\nmid \mathbf{N}$ be a rational prime and let 
$\mathbb{F}$ ($=F$ when we refer to the
notions of \S \ref{S: kummer}) be a field of $q^2$ elements with 
algebraic closure $\mathbb{F}^\mathrm{al}$.
Unless specified otherwise, all geometric objects (e.g.
$Y_1(N)$, $Y_1(N,M)$,\ldots)  are defined over $\mathrm{Spec}(\mathbb{F})$.
Let $\Lambda=\mathbb{Z}/\ell\mathbb{Z}$ for some prime $\ell$ and assume that
$\ell$ does not divide $N q$.
Let $\mathcal{L}_\Lambda$ be the locally constant constructible sheaf on 
$Y_1(N)$ defined by (\ref{sheaf def}).
Denote by
\begin{displaymath}
Z_1(N)\subset Y_1(N)
\hspace{1cm}
Z_1(N,M)\subset Y_1(N,M)
\end{displaymath}
the subsets of supersingular closed points.

%%%%%%%%%%%%%%%%%%%%%%%%%%%%%%%%%%%%%%%%%%%%%%%%%%%%%%%%%%%%%%%%%%%%5

\subsection{Fields of moduli}

%%%%%%%%%%%%%%%%%%%%%%%%%%%%%%%%%%%%%%%%%%%%%%%%%%%%%%%%%%%%%%%%%%%%%

We need a slight generalization of the well-known theorem of Deuring that all
supersingular points on $Y_0(N)$ have residue degree one.

\begin{Lem}\label{local moduli}
Let $E$ be a supersingular elliptic curve over $\mathbb{F}^\mathrm{al}$, let
$C\subset E[N]$ be a cyclic subgroup of order $N$, and let
$\Theta$ be any element of $\mathcal{A}_\Lambda(E)$.  The field of moduli of the
$\Lambda$-augmented $\Gamma_0(N)$ structure $(E,C,\Theta)$ is $\mathbb{F}$.
\end{Lem}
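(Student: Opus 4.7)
The plan is to realize $(E,C)$ as the base change of a carefully chosen model $(E_0,C_0)$ over $\mathbb{F}$ on which the $q^2$-power Frobenius acts trivially on $\mathcal{A}_\Lambda(E)$; then $\Theta$ will automatically be Galois-invariant and the field-of-moduli claim follows.

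First I would invoke Deuring's classical theorem that every supersingular point of $Y_0(N)_{/\mathbb{F}}$ has residue degree one, so $(E,C)$ admits some descent to $\mathbb{F}$. I would sharpen this using the classification of supersingular isogeny classes over $\mathbb{F}=\mathbb{F}_{q^2}$ (Honda--Tate/Waterhouse): the isogeny class with Weil number $-q$ (equivalently, whose members satisfy $\pi_{q^2}=[-q]$) contains a representative of every supersingular $\overline{\mathbb{F}}_q$-isomorphism class, since by the Eichler mass formula the number of isomorphism classes of maximal orders in $B_{q,\infty}$ matches the number of supersingular $j$-invariants. I therefore choose $E_0/\mathbb{F}$ inside this isogeny class with $j(E_0)=j(E)$. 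Because $(q,N)=1$, multiplication by $-q$ is an automorphism of $E_0[N]$ stabilizing every cyclic subgroup of order $N$, so $C$ is automatically Frobenius-stable and descends to an $\mathbb{F}$-rational subgroup $C_0\subset E_0[N]$ without further modification of $E_0$.

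With this model in hand the Galois computation is immediate: $\sigma=\mathrm{Frob}_{q^2}$ acts on $E_0[m]$ as multiplication by $-q$, hence on $\mathrm{Sym}^{2k-2}E_0[m]$ as multiplication by $(-q)^{2k-2}=q^{2k-2}$, while the Tate twist contributes an additional factor $\omega_\mathrm{cyc}(\sigma)^{1-k}=q^{2-2k}$. These factors cancel, so $\mathrm{Gal}(\mathbb{F}^\mathrm{al}/\mathbb{F})$ acts trivially on $\mathcal{A}_\Lambda(E)=(\mathrm{Sym}^{2k-2}E_0[m])(1-k)$, and every $\Theta\in\mathcal{A}_\Lambda(E)$ is Galois-fixed. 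The canonical descent isomorphism $(E,C)^\sigma\cong(E,C)$ therefore carries $\Theta^\sigma$ to $\Theta$, producing an isomorphism of augmented $\Gamma_0(N)$ structures for every $\sigma$ in the Galois group.

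The main obstacle will be producing the descent with $\pi_{q^2}=[-q]$: this is the refinement of Deuring's theorem that depends on the structure theory of supersingular elliptic curves over finite fields. Once this refinement is available, the remainder is a direct eigenvalue calculation, and the Tate twist $\Lambda(1-k)$ built into the definition of $\mathcal{A}_\Lambda$ is evidently designed precisely to neutralize the scalar action of the rational Frobenius on the $(2k-2)$-nd symmetric power of the Tate module.
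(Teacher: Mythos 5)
Your proposal is correct and takes essentially the same route as the paper: both arguments reduce to producing an $\mathbb{F}_{q^2}$-model on which the $q^2$-Frobenius is the central endomorphism $[\pm q]$, so that every cyclic subgroup and, after the Tate twist $(1-k)$ cancels the scalar $(\pm q)^{2k-2}$, every element of $\mathcal{A}_\Lambda(E)$ is Galois-fixed. The only difference is in how that model is obtained: you cite the Honda--Tate/Waterhouse classification of the isogeny class with Weil number $-q$ (where the counting argument you sketch really needs Deuring's correspondence rather than just the mass formula to get surjectivity onto all supersingular $j$-invariants), whereas the paper constructs it directly by twisting an arbitrary $\mathbb{F}_{q^2}$-model by the root of unity $\zeta$ with $[q]=\zeta\circ\mathrm{Fr}$.
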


\begin{proof}
As $E$ is supersingular, its $j$-invariant lies in
$\mathbb{F}$.  Let $A$ be an elliptic curve over $\mathbb{F}$ with the same 
$j$-invariant as $E$, and let $\mathrm{Fr}\in\mathrm{End}_{\mathbb{F}}(A)$ 
be the degree $q^2$ (relative) Frobenius.
If $\mathrm{Fr}\in\mathbb{Z}$, then $\mathrm{Fr}$ commutes with all elements of
$\mathrm{End}_{\mathbb{F}^\mathrm{al}_\mathfrak{q}}(A)$, and so
\begin{equation}\label{endomorphism ring}
\mathrm{End}_{\mathbb{F}}(A)=\mathrm{End}_{\mathbb{F}^\mathrm{al}}(A).
\end{equation}
If $\mathrm{Fr}\not\in\mathbb{Z}$ then $\mathrm{Fr}$ generates a quadratic imaginary
subfield $L$ of the definite quaternion algebra (ramified exactly 
at $q$ and $\infty$) $\mathrm{End}_{\mathbb{F}^\mathrm{al}}(A)\otimes\mathbb{Q}$,
and $q$ is nonsplit in $L$.  As $\mathrm{Fr}$ has degree 
 $q^2$ we must have
$\mathrm{Fr}=\zeta^{-1}q$ for some root of unity $\zeta\in L$, and in fact
$\zeta$ belongs to $L\cap \mathrm{End}_{\mathbb{F}}(A)$ (this 
follows from the fact \cite[Corollary 12.3.5]{katz-mazur}
that $\mathrm{Fr}$ and $[q]$ have the same scheme-theoretic kernel,
and so there is a factorization $[q]=\zeta\circ\mathrm{Fr}$
for some automorphism $\zeta$ of $A$).  
Replacing $A$ by its twisted form
corresponding to the cocycle sending the relative Frobenius 
$\sigma\in\mathrm{Gal}(\mathbb{F}^\mathrm{al}/\mathbb{F})$ to 
$\zeta\in\mathrm{Aut}_{\mathbb{F}}(E)$,
a simple calculation shows that (\ref{endomorphism ring})
holds.  Then $\mathrm{Fr}$ is a central element of 
$\mathrm{End}_{\mathbb{F}^\mathrm{al}}(A)$, and so $\mathrm{Fr}=[\pm q]$.

With this choice of $A$,  
$\mathrm{Gal}(\mathbb{F}^\mathrm{al}/\mathbb{F})$ 
acts trivially
on $\mathcal{A}_\Lambda(A)$  and the triple
$(A,C_A,\Theta_A)$ is defined over $\mathbb{F}$ for \emph{any}
cyclic order $N$ subgroup $C_A\subset A(\mathbb{F}^\mathrm{al})$ 
and \emph{any} $\Theta_A\in\mathcal{A}_\Lambda(A)$.
Over $\mathbb{F}^\mathrm{al}$ we may fix an isomorphism $f:E\cong A$
and set $C_A=f(C)$ and $\Theta_A=f(\Theta)$.
Then $(E,C,\Theta)$ and
$(A,C_A,\Theta_A)$ are isomorphic (over $\mathbb{F}^\mathrm{al}$) 
and so have the same field of moduli  $\mathbb{F}$.
\end{proof}

%%%%%%%%%%%%%%%%%%%%%%%%%%%%%%%%%%%%%%%%%%%%%%%%%%%%%%%%%%%

\subsection{Ihara's theorem}
\label{SS:ihara}

%%%%%%%%%%%%%%%%%%%%%%%%%%%%%%%%%%%%%%%%%%%%%%%%%%%%%%%%%%%%

We now recall a theorem of Ihara \cite{ihara} and derive some
consequences; our exposition of Ihara's theorem 
is influenced by the discussion of 
\cite[Chaptire 7]{cornut-thesis}.
For each integer $m$ prime to $q$ set
\begin{displaymath}
\underline{\mu}^*_m=\mathrm{Spec}(\mathbb{F}[X]/\Phi_m(X))
\hspace{1cm}
\underline{\mu}_m=\mathrm{Spec}(\mathbb{F}[X]/(X^m-1)),
\end{displaymath}
where $\Phi_m(X)$ is the $m^\mathrm{th}$ cyclotomic polynomial.
Let $Y(m)$ be the affine modular curve classifying ``naive'' level
$m$ structures in the sense of \cite{katz-mazur} on elliptic
curves over $\mathbb{F}$-schemes.  Thus $Y(m)$ is a fine moduli space if $m>2$,
and for all $m$ (prime to $q$) the Weil pairing provides a canonical
map $Y(m)\map{}\underline{\mu}^*_m$
of $\mathbb{F}$-schemes.  Fix a topological generator
\begin{displaymath}
\zeta=\varprojlim_{(m,q)=1}\zeta_m\in 
\varprojlim_{(m,q)=1}\underline{\mu}_m(\mathbb{F}^\mathrm{al}).
\end{displaymath}
For each $m$ there is a map $\mathrm{Spec}(\mathbb{F}[\zeta_m])\map{}
\underline{\mu}_m^*$
determined by the  map $X\mapsto \zeta_m$ on $\mathbb{F}$-algebras.  Define 
\begin{displaymath}
Y_\zeta(m)= Y(m)\times_{\underline{\mu}_m^*} 
\mathrm{Spec}(\mathbb{F}[\zeta_m]),
\end{displaymath}
a smooth curve over $\mathbb{F}$ (geometrically disconnected unless
$m\mid q^2-1$).

The subgroup $G_\zeta(m)\subset G(m)=
\mathrm{GL}_2(\mathbb{Z}/m\mathbb{Z})/\{\pm 1\}$
defined by 
\begin{displaymath}
G_\zeta(m)=\{ A\in G(m)\mid \det(A)\in 
q^{2\mathbb{Z}}\subset (\mathbb{Z}/m\mathbb{Z})^\times\}
\end{displaymath}
acts on both $Y_\zeta(m)$ and $\mathrm{Spec}(\mathbb{F}[\zeta_m])$, 
and the actions
are compatible with the structure map 
$Y_\zeta(m)\map{}\mathrm{Spec}(\mathbb{F}[\zeta_m])$.
Set $G^1(m)=\mathrm{PSL}(\mathbb{Z}/m\mathbb{Z})$, let
\begin{displaymath}
\Gamma_0(m)=
\left\{\left(\begin{matrix} *& *\\ 0& * \end{matrix} \right)\right\}
\subset G(m)
\hspace{1cm}
\Gamma_1(m)=
\left\{\left(\begin{matrix} 1& *\\ 0& * \end{matrix} \right)\right\}
\subset \Gamma_0(m)
\end{displaymath}
be the habitual congruence subgroups,  
and let $\Gamma_\mathrm{Ih}(m)\subset G(m)$ be the center.
For $*\in\{0,1,\mathrm{Ih}\}$ let $Y_*(m)$ be the quotient of
$Y_\zeta(m)$ by the action of 
\begin{displaymath}
\Gamma_*(m)\cap G_\zeta(m).
\end{displaymath}
The function field of the curve $Y_\mathrm{Ih}(m)$ is the field denoted $K_m$
in \cite{ihara}, and there is a canonical isomorphism
of curves over $\mathbb{F}^\mathrm{al}$.
\begin{displaymath}
Y_\mathrm{Ih}(m)\times_{\mathrm{Spec}(\mathbb{F})}\mathrm{Spec}
(\mathbb{F}^\mathrm{al})\cong
Y_\zeta(m)\times_{\mathrm{Spec}(\mathbb{F}[\zeta_m])} 
\mathrm{Spec}(\mathbb{F}^\mathrm{al}).
\end{displaymath}
Denote by $K_*(m)$ the function field of $Y_*(m)$ for 
$*\in\{0,1,\zeta,\mathrm{Ih}\}$ or for $*$ equal to the empty character,
and view these as subfields of some fixed separable closure
$K(1)^\mathrm{sep}$.
Define a $\Lambda$-vector space
$
L_\Lambda=\mathrm{Sym}^{2k-2}\Lambda^2
$
and endow $L_\Lambda$ with an action of 
$\mathrm{Gal}(K(1)^\mathrm{sep}/K(1))$ via
\begin{displaymath}
\mathrm{Gal}(K(1)^\mathrm{sep}/K(1))\map{}\mathrm{Gal}
(K_\zeta(\ell)/K(1))\cong G_\zeta(\ell)
\subset \mathrm{GL}_2(\Lambda)/\{\pm 1\}.
\end{displaymath}

\begin{Rem}\label{ihara factor}
If the action of $\mathrm{GL}_2(\Lambda)$ on $L_\Lambda$ 
is twisted by 
$\det$, then the Galois action is twisted by
the cyclotomic character.  In particular $\Gamma_\mathrm{Ih}(\ell)$
acts trivially on $L_\Lambda\otimes \det^{1-k}$, and 
so the Galois action on $L_\Lambda(1-k)$ factors through 
\begin{displaymath}
\mathrm{Gal}(K(1)^\mathrm{sep}/K(1))\map{}\mathrm{Gal}(K_\mathrm{Ih}(\ell)/K(1)).
\end{displaymath}
Under the bijection between locally constant \'etale sheaves on
$Y_1(N)$ and modules for the absolute Galois group of $K_1(N)$ 
which are unramified outside of the cusps,
$\mathcal{L}_\Lambda(2k-2)$ corresponds to $L_\Lambda$.
\end{Rem}

\begin{Def}
If $M/K(1)$ is a separable extension, 
a \emph{cusp} of $M$ is a place lying
above the place $J=\infty$ of $K(1)$.  A \emph{supersingular prime}
of $M$ is a place lying above a place $J=j$ of $K(1)$ with $j\in\mathbb{F}$
a supersingular $j$-invariant.
\end{Def}

\begin{Thm}\label{ihara theorem}(Ihara)
For any $m>1$ with $(m,q)=1$, $K_\mathrm{Ih}(m)$ has no non-trivial 
everywhere unramified 
extensions in which all supersingular primes are split completely.
Furthermore, $K_\mathrm{Ih}(\infty)=\cup_{(r,q)=1}K_\mathrm{Ih}(r)$ 
is the maximal Galois extension of $K_\mathrm{Ih}(m)$ satisfying
\begin{enumerate}
\item it is tamely ramified, and unramified outside the cusps of 
$K_\mathrm{Ih}(m)$,
\item the supersingular primes of $K_\mathrm{Ih}(m)$ 
are split completely in $K_\mathrm{Ih}(\infty)$.
\end{enumerate}
\end{Thm}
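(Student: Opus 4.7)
My plan is to handle the two assertions separately, isolating the deep input from Ihara's original work and treating the rest as bookkeeping.

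First I would verify that $K_\mathrm{Ih}(\infty)/K_\mathrm{Ih}(m)$ satisfies the two listed properties. For tameness and unramification outside the cusps, the cover $Y_\zeta(r) \to Y_\zeta(m)$ is \'etale away from the cusps (we are in characteristic prime to $r$), and the ramification at each cusp is tame of order dividing $r/m$; descending to the $\Gamma_\mathrm{Ih}$-quotients preserves these properties. For complete splitting at supersingular primes, fix a supersingular $j$-invariant $j_0 \in \mathbb{F}$ and choose, as in the proof of Lemma \ref{local moduli}, a model $A/\mathbb{F}$ with $j(A) = j_0$ for which the relative Frobenius equals $[\pm q]$. Then Frobenius acts as a scalar on $A[r]$ for every $r$ coprime to $q$, hence its image in $G(r)/\Gamma_\mathrm{Ih}(r)$ is trivial. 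It follows that every prime of $K_\mathrm{Ih}(m)$ above $j_0$ splits completely in $K_\mathrm{Ih}(r)$ for each $r$, and therefore in $K_\mathrm{Ih}(\infty)$.

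The hard direction, and the main obstacle, is the first assertion: that $K_\mathrm{Ih}(m)$ has no non-trivial everywhere unramified extension in which all supersingular primes split completely. I would invoke Ihara's theorem from \cite{ihara} in the geometric/adelic form presented in \cite[Chapitre 7]{cornut-thesis}. The idea is to describe the maximal tame extension of $K_\mathrm{Ih}(m)$ unramified outside the cusps via an adelic double coset involving the quaternion algebra ramified at $q$ and $\infty$, to identify the conjugacy classes of Frobenius at supersingular primes as precisely those elements whose local component at $q$ becomes scalar (exactly the content of the Lemma \ref{local moduli} calculation), and then to show that quotienting by this collection of Frobenius classes exhausts the entire tame fundamental group. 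The essential non-trivial input is a strong-approximation argument for $\mathrm{SL}_2$ together with the Brandt groupoid parametrization of supersingular points, which is the same framework used earlier in Proposition \ref{first action}.

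Granting the first assertion, the maximality claim is formal. Any Galois extension $N/K_\mathrm{Ih}(m)$ satisfying (1) and (2) becomes, upon choosing $r$ divisible by all ramification indices at the cusps of $N/K_\mathrm{Ih}(m)$, an everywhere unramified extension of the compositum $N \cdot K_\mathrm{Ih}(rm)$ over $K_\mathrm{Ih}(rm)$, in which supersingular primes still split completely (since total splitting is inherited by pullback along the already-totally-split tower $K_\mathrm{Ih}(rm)/K_\mathrm{Ih}(m)$). Applying the first assertion with $m$ replaced by $rm$ forces $N \subseteq K_\mathrm{Ih}(rm) \subseteq K_\mathrm{Ih}(\infty)$. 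Thus the entire weight of the theorem rests on Ihara's original non-trivial input; once that is granted, the remaining arguments are standard manipulations of ramification and splitting within the tower $\{K_\mathrm{Ih}(r)\}_r$.
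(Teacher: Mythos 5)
Your proposal is correct and, at bottom, takes the same route as the paper: the paper's entire proof of this theorem is the single sentence ``This is the main result of \cite{ihara},'' and you likewise defer the essential content (the non-existence of unramified extensions split at supersingular primes, proved via Ihara's adelic/strong-approximation arguments) to \cite{ihara}, supplementing it only with the routine verifications of tameness, the Deuring-type splitting computation already contained in Lemma \ref{local moduli}, and the Abhyankar-style reduction for maximality. These supplements are sound (for an infinite $N$ one should of course pass to finite subextensions before bounding the cuspidal ramification indices), so no further comment is needed.
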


\begin{proof}
This is the main result of \cite{ihara}.
\end{proof}

\begin{Cor}\label{IR cor}
Let $M_\mathrm{Ih}(N)\supset K_\mathrm{Ih}(\infty)$ be the 
maximal separable extension of $K_\mathrm{Ih}(N)$
unramified away from the cusps.
The restriction map on Galois cohomology
\begin{eqnarray}\label{ihara restriction}
\lefteqn{
H^1(M_\mathrm{Ih}(N)/K_\mathrm{Ih}(N), L_\Lambda(1-k))\map{} } \\
\nonumber & &
\left(\bigoplus_v 
H^1(K_\mathrm{Ih}(N)_v , L_\Lambda(1-k))\right)
\bigoplus \left(\bigoplus_w
H^1(K_\mathrm{Ih}(N)_w^\mathrm{unr}, L_\Lambda(1-k)) \right)
\end{eqnarray}
is injective. Here the  sum over $v$ is over all supersingular primes,
the sum over $w$ is  over all cusps, and the superscript $\mathrm{unr}$
denotes maximal unramified extension.
\end{Cor}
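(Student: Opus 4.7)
The plan is to kill any $c$ in the kernel of the restriction map by exploiting Ihara's theorem at two levels of the tower
\[
K_\mathrm{Ih}(N) \subset K_\mathrm{Ih}(\ell N) \subset K_\mathrm{Ih}(\infty) \subset M_\mathrm{Ih}(N).
\]
The essential preparatory observation is Remark \ref{ihara factor}: the Galois action on $L_\Lambda(1-k)$ factors through $\mathrm{Gal}(K_\mathrm{Ih}(\ell)/K(1))$, so that both $\mathrm{Gal}(M_\mathrm{Ih}(N)/K_\mathrm{Ih}(\infty))$ and $\mathrm{Gal}(K_\mathrm{Ih}(\infty)/K_\mathrm{Ih}(\ell N))$ act trivially. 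This is what lets us convert cohomology classes into genuine homomorphisms at each stage.

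Set $G = \mathrm{Gal}(M_\mathrm{Ih}(N)/K_\mathrm{Ih}(N))$ and $N = \mathrm{Gal}(M_\mathrm{Ih}(N)/K_\mathrm{Ih}(\infty))$. Since $N$ acts trivially on $L_\Lambda(1-k)$, the restriction $\phi := c|_N$ is a $G/N$-equivariant homomorphism $N \to L_\Lambda(1-k)$, and the fixed field $L$ of $\ker \phi$ in $M_\mathrm{Ih}(N)$ is Galois over $K_\mathrm{Ih}(N)$. I would check that $L/K_\mathrm{Ih}(\infty)$ is everywhere unramified and completely split at supersingular primes. At a cusp $w$ of $K_\mathrm{Ih}(N)$ the cusp hypothesis furnishes an $m_w \in L_\Lambda(1-k)$ with $c(\sigma) = (\sigma-1)m_w$ for $\sigma$ in the inertia subgroup $I_w \subset G$; restricting to $\sigma \in I_w \cap N$, which acts trivially on $L_\Lambda(1-k)$, gives $\phi|_{I_w \cap N} = 0$. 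At a supersingular prime $v$ the complete splitting of $v$ in $K_\mathrm{Ih}(\infty)/K_\mathrm{Ih}(N)$ (Theorem \ref{ihara theorem} applied at level $N$) forces the decomposition $D_v$ to lie inside $N$, and the analogous coboundary computation gives $\phi|_{D_v} = 0$. By the maximality clause of Theorem \ref{ihara theorem}, $L = K_\mathrm{Ih}(\infty)$, so $\phi = 0$ and by inflation--restriction $c$ is inflated from some $c_0 \in H^1(K_\mathrm{Ih}(\infty)/K_\mathrm{Ih}(N), L_\Lambda(1-k))$.

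The same idea, applied one step down, takes $\psi := c_0|_{\mathrm{Gal}(K_\mathrm{Ih}(\infty)/K_\mathrm{Ih}(\ell N))}$ (again a homomorphism, since the action is trivial there) and considers its fixed field $L' \subset K_\mathrm{Ih}(\infty)$. Over $K_\mathrm{Ih}(\ell N)$, supersingular splitting is now automatic from Theorem \ref{ihara theorem} applied at level $\ell N$, and unramifiedness at cusps again follows from the cusp hypothesis on $c$ via the trivial-action trick. Thus $L'/K_\mathrm{Ih}(\ell N)$ is everywhere unramified and split at supersingular primes, so Theorem \ref{ihara theorem} (the nonexistence part) forces $L' = K_\mathrm{Ih}(\ell N)$ and hence $\psi = 0$. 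Inflation--restriction then identifies $c_0$ with the inflation of some $c_1 \in H^1(K_\mathrm{Ih}(\ell N)/K_\mathrm{Ih}(N), L_\Lambda(1-k))$.

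The hard part will be killing this finite-level remainder $c_1$. The base group $\mathrm{Gal}(K_\mathrm{Ih}(\ell N)/K_\mathrm{Ih}(N))$ acts nontrivially on $L_\Lambda(1-k)$, so the reduction-to-a-homomorphism trick no longer applies, and the supersingular hypothesis becomes vacuous (decomposition groups are trivial by the splitting part of Ihara's theorem), leaving only the cusp condition. I expect this last step to rely on a direct finite-group cohomology computation: the irreducibility of $L_\Lambda(1-k)$ as an $\mathrm{SL}_2(\Lambda)$-module coming from Lemma \ref{irreducible} together with the hypothesis $\ell > 2k-2$ should, combined with the combinatorics of the cusp covering $Y_\mathrm{Ih}(\ell N) \to Y_\mathrm{Ih}(N)$, pin down $H^1(\mathrm{Gal}(K_\mathrm{Ih}(\ell N)/K_\mathrm{Ih}(N)), L_\Lambda(1-k))$ finely enough that the cusp condition alone kills $c_1$.
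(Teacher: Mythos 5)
Your first two steps are sound, and together they amount to the paper's single application of Ihara's theorem carried out in two stages: restricting a kernel class to $\mathrm{Gal}(M_\mathrm{Ih}(N)/K_\mathrm{Ih}(N\ell))$, where the action on $L_\Lambda(1-k)$ is trivial, one obtains a homomorphism vanishing on decomposition groups of supersingular primes and on inertia at cusps, and Theorem \ref{ihara theorem} forces it to be zero; inflation--restriction then places the class in the image of $H^1(K_\mathrm{Ih}(N\ell)/K_\mathrm{Ih}(N), L_\Lambda(1-k))$. (Your detour through $K_\mathrm{Ih}(\infty)$ is harmless, but note that the maximality clause of Theorem \ref{ihara theorem} concerns extensions of the \emph{finite-level} field, so you must verify tameness and the splitting/ramification conditions for $L/K_\mathrm{Ih}(N)$, not merely for $L/K_\mathrm{Ih}(\infty)$; this is routine once you know the inertia of $L/K_\mathrm{Ih}(\infty)$ at the cusps is trivial, since tameness here means prime to the residue characteristic $q$, not to $\ell$.)

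The genuine gap is the final step, which you explicitly leave as a hope. Killing $c_1\in H^1(K_\mathrm{Ih}(N\ell)/K_\mathrm{Ih}(N), L_\Lambda(1-k))$ is not achieved by computing that cohomology group from the irreducibility of $L_\Lambda(1-k)$: irreducibility (Lemma \ref{irreducible}, via $\ell>2k-2$) controls invariants, i.e.\ $H^0$, and gives no formal vanishing of $H^1$ of a finite group with coefficients in a symmetric-power module. What is actually used is a specific structural fact from \cite[p.~167]{ihara}: the inertia subgroup of the cusp $\infty$ in $\mathrm{Gal}(K_\mathrm{Ih}(N\ell)/K_\mathrm{Ih}(N))$ is an $\ell$-Sylow subgroup of that group. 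Since $H^1(K_\mathrm{Ih}(N\ell)/K_\mathrm{Ih}(N), L_\Lambda(1-k))$ is killed by a power of $\ell$, the corestriction--restriction argument (\cite[Theorem IX.2.4]{serre:local_fields}) shows that restriction to this Sylow subgroup is injective, so the single hypothesis that $c_1$ is unramified at the cusp $\infty$ already forces $c_1=0$; no analysis of the full cusp covering or of the module structure is needed. Without this Sylow identification your argument does not close, and it is the one input you would not recover from the ingredients you list.
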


\begin{proof}
First consider the restriction map (note Remark \ref{ihara factor})
\begin{equation}\label{Nl restriction}
H^1(M_\mathrm{Ih}(N)/K_\mathrm{Ih}(N\ell), 
L_\Lambda(1-k)) \map{} \bigoplus_{v} 
\mathrm{Hom}(H_v, L_\Lambda(1-k))
\end{equation}
where the sum is over all supersingular primes and all cusps, and
$H_v\subset \mathrm{Gal}(M_\mathrm{Ih}(N)/K_\mathrm{Ih}(N))$ 
is either the decomposition group or inertia group of 
a fixed place of $M_\mathrm{Ih}(N)$ above $v$, 
according as $v$ is supersingular or a cusp.
Any homomorphism from 
$\mathrm{Gal}(M_\mathrm{Ih}(N)/K_\mathrm{Ih}(N\ell))$
to $L_\Lambda(1-k)$ which vanishes on all $H_w$ factors through 
$\mathrm{Gal}(\Phi/K_\mathrm{Ih}(N\ell))$ where $\Phi$ is the maximal Galois
extension of $K_\mathrm{Ih}(N\ell)$ which is everywhere unramified and in
which all supersingular primes split completely.
By Theorem \ref{ihara theorem} $\Phi=K_\mathrm{Ih}(N\ell)$, and so the
map (\ref{Nl restriction}) is injective. Thus any class in the
kernel of (\ref{ihara restriction}) also lies in the kernel of
restriction
\begin{displaymath}
H^1(M_\mathrm{Ih}(N)/K_\mathrm{Ih}(N),L_\Lambda(1-k))\map{}
H^1(M_\mathrm{Ih}(N)/K_\mathrm{Ih}(N\ell),L_\Lambda(1-k)),
\end{displaymath}
and so is in the image of the inflation map
\begin{equation}\label{ihara inflation}
H^1(K_\mathrm{Ih}(N\ell)/K_\mathrm{Ih}(N),L_\Lambda(1-k))\map{} 
H^1(M_\mathrm{Ih}(N)/K_\mathrm{Ih}(N), L_\Lambda(1-k))
\end{equation}
and is unramified at the cusps.
The inertia subgroup in 
$\mathrm{Gal}(K_\mathrm{Ih}(N\ell)/K_\mathrm{Ih}(N))$
of the  cusp $\infty$ is an $\ell$-Sylow subgroup
(this follows from \cite[p. 167]{ihara}), and so any
element in the image of (\ref{ihara inflation}) which is unramified
at the cusps is trivial by \cite[Theorem IX.2.4]{serre:local_fields}.
\end{proof}

\begin{Prop}\label{big ihara}
Let $j:Y_1(N)\hookrightarrow X_1(N)$ be the usual compactification and
assume $\ell\nmid \varphi(N)$. The natural map
\begin{equation}\label{etale H2 cokernel}
H^2_{Z_1(N)}(Y_1(N),\mathcal{L}_\Lambda(k))^\Delta
\map{}H^2(X_1(N),j_*\mathcal{L}_\Lambda(k))^\Delta
\end{equation}
is surjective.
\end{Prop}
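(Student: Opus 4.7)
The plan is to combine excision, Hochschild--Serre, and Corollary \ref{IR cor} (Ihara's theorem). Set $W = X_1(N) \setminus Z_1(N)$. First I would write down the excision long exact sequence for the closed immersion $Z_1(N) \hookrightarrow X_1(N)$ with coefficients $j_*\mathcal{L}_\Lambda(k)$:
\begin{displaymath}
\cdots \to H^2_{Z_1(N)}(Y_1(N), \mathcal{L}_\Lambda(k)) \to H^2(X_1(N), j_*\mathcal{L}_\Lambda(k)) \to H^2(W, j_*\mathcal{L}_\Lambda(k)) \to H^3_{Z_1(N)}(X_1(N), j_*\mathcal{L}_\Lambda(k)) \to \cdots
\end{displaymath}
By the cohomological purity identification $H^i_{Z_1(N)} \cong H^{i-2}(Z_1(N), \cdot)$ (cf.\ Lemma \ref{purity}) and the $0$-dimensionality of $Z_1(N)$, the rightmost term vanishes. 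The hypothesis $\ell\nmid\varphi(N) = |\Delta|$ makes $(-)^\Delta$ exact on $\Lambda$-modules, so the proposition reduces to the single vanishing $H^2(W, j_*\mathcal{L}_\Lambda(k))^\Delta = 0$.

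Next I would apply Hochschild--Serre to $W/\mathbb{F}$. Because $Z_1(N)$ is nonempty (Lemma \ref{local moduli}), $W$ is smooth affine over $\mathbb{F}$, so Artin vanishing yields $H^i(W_{\bar\mathbb{F}}, j_*\mathcal{L}_\Lambda(k)) = 0$ for $i \ge 2$; the $H^0$ of the same sheaf also vanishes, since it embeds into $H^0(Y_1(N)_{\bar\mathbb{F}}, \mathcal{L}_\Lambda)=0$ by the no-$\mathrm{SL}_2$-invariants argument recalled in Lemma \ref{HS}. The spectral sequence therefore collapses to
\begin{displaymath}
H^2(W, j_*\mathcal{L}_\Lambda(k))^\Delta \cong H^1\!\bigl(\mathbb{F},\, H^1(W_{\bar\mathbb{F}}, j_*\mathcal{L}_\Lambda(k))^\Delta\bigr).
\end{displaymath}

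The final step is to invoke Corollary \ref{IR cor}. Using Poincar\'e--Lefschetz duality on the smooth affine curve $W$ together with the self-duality (\ref{nice sheaf pairing}) of $\mathcal{L}_\Lambda(k-1)$, I would relate $H^1(W_{\bar\mathbb{F}}, j_*\mathcal{L}_\Lambda(k))$ to compactly supported cohomology with $\mathcal{L}_\Lambda(k-1)$ coefficients; this matches the Tate twist of Remark \ref{ihara factor}, making the relevant Galois module $L_\Lambda(1-k)$. Taking $\Delta$-invariants corresponds to descent to $Y_\mathrm{Ih}(N)$, since the diamond action lands in the center of $G(N)$. What remains is a subgroup of $H^1(M_\mathrm{Ih}(N)/K_\mathrm{Ih}(N),\, L_\Lambda(1-k))$ whose elements map to zero in every supersingular local cohomology (classes live over the neighborhood $W$ of the excised supersingular locus, so extend unramified there) and in every cuspidal unramified cohomology (by the $j_*$ construction). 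Corollary \ref{IR cor} then forces the vanishing.

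The hardest part will be carrying out Step~3 cleanly: tracking the Tate twist through Poincar\'e--Lefschetz duality, matching $\Delta$-invariants with descent to $K_\mathrm{Ih}(N)$ (subtle because $Y_\mathrm{Ih}(N)$ is a different quotient from $Y_1(N)/\Delta = Y_0(N)$), and precisely verifying that the local restrictions to supersingular and cuspidal cohomology vanish for classes in the image of $H^1(W_{\bar\mathbb{F}},j_*\mathcal{L}_\Lambda(k))^\Delta$. The excision and Hochschild--Serre reductions are routine, but also demand care that the $\Delta$-action commutes with every identification.
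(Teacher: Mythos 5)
There is a genuine gap, and it is fatal to the route you chose. Write $W=X_1(N)\setminus Z_1(N)$. Exactness of the localization sequence at $H^2(X_1(N),j_*\mathcal{L}_\Lambda(k))$ shows that surjectivity of (\ref{etale H2 cokernel}) is equivalent to the vanishing of the \emph{map} $H^2(X_1(N),j_*\mathcal{L}_\Lambda(k))^\Delta\to H^2(W,j_*\mathcal{L}_\Lambda(k))^\Delta$; your reduction to the vanishing of the \emph{group} $H^2(W,j_*\mathcal{L}_\Lambda(k))^\Delta$ asks for strictly more, and that group does not vanish. Indeed, continuing the sequence one step and using $H^3(X_1(N),j_*\mathcal{L}_\Lambda(k))=0$ (Hochschild--Serre together with (\ref{degree one})), one sees that $H^2(W,j_*\mathcal{L}_\Lambda(k))$ surjects onto $H^3_{Z_1(N)}(X_1(N),j_*\mathcal{L}_\Lambda(k))\cong\bigoplus_{z}H^1(z,i_z^*\mathcal{L}_\Lambda(k-1))$ by purity. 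Each $z\in Z_1(N)$ has finite residue field, hence cohomological dimension one --- a fact the paper's own proof invokes, in the opposite direction --- and $\#H^1(z,-)=\#H^0(z,-)=\#\mathcal{A}_\Lambda^\circ(E_z^\mathrm{univ})\neq 0$ (Frobenius acts trivially on $\mathcal{A}_\Lambda(E_z^\mathrm{univ})$ for supersingular $z$, as in the proof of Lemma \ref{local moduli}). Since $\ell\nmid\varphi(N)$, taking $\Delta$-invariants is exact and preserves this nonvanishing. So both of your claimed vanishings ($H^3_{Z_1(N)}=0$ by ``$0$-dimensionality'', and $H^2(W)^\Delta=0$) are false, and the second is the one your argument actually requires.

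The conceptual source of the error resurfaces in your last step: you assert that classes coming from $W$ are locally trivial at the supersingular primes because they ``live over the neighborhood of the excised locus,'' but $W$ is the \emph{complement} of the supersingular locus, so its classes restrict to the punctured local disks $K_1(N)_v$ with no triviality constraint there; these unconstrained local contributions are precisely what make $H^2(W)^\Delta$ nonzero. Ihara's theorem, via Corollary \ref{IR cor}, controls classes that are locally \emph{split} at supersingular primes, which is dual to cohomology \emph{supported} on $Z_1(N)$. This is why the paper's proof instead uses $0\to j_!\mathcal{L}_\Lambda\to j_*\mathcal{L}_\Lambda\to i_*i^*j_*\mathcal{L}_\Lambda\to 0$ to compare $H^2(X_1(N),j_*\mathcal{L}_\Lambda(k))$ with $H^2_c(Y_1(N),\mathcal{L}_\Lambda(k))$, and then dualizes the resulting surjectivity statement (Milne's arithmetic duality for curves over finite fields) to identify the cokernel of (\ref{etale H2 cokernel}) with the kernel of a localization map at supersingular primes and cusps, to which Corollary \ref{IR cor} applies. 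Salvaging your approach would require proving instead that $H^2(W,j_*\mathcal{L}_\Lambda(k))^\Delta\to H^3_{Z_1(N)}(X_1(N),j_*\mathcal{L}_\Lambda(k))^\Delta$ is injective, which is essentially a restatement of the proposition and no easier than the duality argument.
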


\begin{proof}
Let $i:C_1(N)\hookrightarrow X_1(N)$ denote the subscheme of 
cusps, i.e. the complement of $Y_1(N)$ in $X_1(N)$.
From the exact sequence of sheaves on $X_1(N)$
$$
0\map{}j_!\mathcal{L}_\Lambda\map{}j_*\mathcal{L}_\Lambda
\map{}i_*i^*j_*\mathcal{L}_\Lambda\map{}0
$$
and \cite[Proposition II.2.3]{milne:etale} we obtain the exact sequence
$$
H^1(C_1(N), i^*j_*\mathcal{L}_\Lambda(k))\map{}
H^2_c(Y_1(N),\mathcal{L}_\Lambda(k))\map{}
H^2(X_1(N),j_*\mathcal{L}_\Lambda(k))\map{}0
$$
in which the terminating zero is justified by the observation that
closed points on $X_1(N)$, having finite residue field, have cohomological
dimension $1$.  It therefore suffices to prove the surjectivity of
\begin{equation}\label{ihara restriction 0}
H^2_{Z_1(N)}(Y_1(N),\mathcal{L}_\Lambda(k))^\Delta\oplus
H^1(C_1(N), i^*j_*\mathcal{L}_\Lambda(k))^\Delta
\map{}
H^2_c(Y_1(N),\mathcal{L}_\Lambda(k))^\Delta.
\end{equation}
We take $\Lambda$-duals and translate the problem into the
language of Galois cohomology.

Let $M_1(N)$ denote the maximal extension of $K_1(N)$ unramified
outside the cusps.
By \cite[Corollary II.4.13(c)]{milne:duality}
there is an  isomorphism
\begin{displaymath}
H^2_c(Y_1(N),\mathcal{L}_\Lambda(k)) \cong
H^1(M_1(N)/K_1(N),L_\Lambda(1-k))^\vee
\end{displaymath}
in which the superscript $\vee$ denotes $\Lambda$-dual.
If we let $U$ denote the open complement of $Z_1(N)$ in $Y_1(N)$
then the pairing of \cite[Corollary II.3.3]{milne:duality} 
identifies the exact sequence 
\cite[Proposition II.2.3(d)]{milne:duality}
\begin{displaymath}
H^r_c(U,\mathcal{L}_\Lambda(k-1))\rightarrow 
H^r_c(Y_1(N),\mathcal{L}_\Lambda(k-1))
\rightarrow \bigoplus_{z\in Z_1(N)} H^r(z,i_z^*\mathcal{L}_\Lambda(k-1))
\end{displaymath}
with the dual of the relative cohomology sequence
\begin{displaymath}
H^{3-r}(U,\mathcal{L}_\Lambda(k))\leftarrow
H^{3-r}(Y_1(N),\mathcal{L}_\Lambda(k))
\leftarrow H^{3-r}_{Z_1(N)}(Y_1(N),\mathcal{L}_\Lambda(k)).
\end{displaymath}
This gives the first isomorphism of  
\begin{eqnarray*}
H^2_{Z_1(N)}(Y_1(N),\mathcal{L}_\Lambda(k))^\vee &\cong& 
\bigoplus_{z\in Z_1(N)} H^1(z,i_z^*\mathcal{L}_\Lambda(k-1)) \\
& \cong &\hspace{10pt}\bigoplus_{v} H^1(D_v/I_v, L_\Lambda(1-k)) ,
\end{eqnarray*}
in which the second sum is over all supersingular primes and $I_v\subset D_v$
are the inertia and decomposition subgroups in 
$\mathrm{Gal}(M_1(N)/K_1(N))$ of some choice of
place above $v$.
Finally local duality gives the second isomorphism of
\begin{eqnarray*}
H^1(C_1(N), i^*j_*\mathcal{L}_\Lambda(k))^\vee
&\cong&
\bigoplus_w H^1(D_w/I_w, L_\Lambda(2-k)^{I_w})^\vee \\
&\cong&
\bigoplus_w H^1(I_w, L_\Lambda(1-k))^{D_w/I_w}
\end{eqnarray*}
where both sums are over all cusps.
Thus the cokernel of (\ref{ihara restriction 0}) is isomorphic
to the kernel of
\begin{eqnarray}\label{ihara restriction ii}\lefteqn{
H^1(M_1(N)/K_1(N),L_\Lambda(1-k))^\Delta \map{}   }  \\ & &
\left(\bigoplus_v H^1(K_1(N)_v, L_\Lambda(1-k))\right) 
\bigoplus \left(\bigoplus_w H^1(K_1(N)_w^\mathrm{unr}, 
L_\Lambda(1-k))\right)\nonumber
\end{eqnarray}
where again the $v$'s range over supersingular primes and the
$w$ range over cusps.

As we assume that $\ell$ is prime to $\varphi(N)$, the
inflation-restriction sequence identifies the kernel of 
(\ref{ihara restriction ii})  with the kernel of
\begin{eqnarray}\label{ihara restriction iii}\lefteqn{
H^1(M_1(N)/K_0(N),L_\Lambda(1-k)) \map{}   }  \\ & &
\left(\bigoplus_v H^1(K_0(N)_v, L_\Lambda(1-k))\right) 
\bigoplus \left(\bigoplus_w H^1(K_0(N)_w^\mathrm{unr}, 
L_\Lambda(1-k))\right).\nonumber
\end{eqnarray}
The fields $K_1(N)$ and $K_\mathrm{Ih}(N)$ have a common extension
which is unramified outside the cusps (namely $K_\zeta(N)$)
and so $M_1(N)=M_\mathrm{Ih}(N)$.
We may therefore consider the
restriction map
\begin{displaymath}
H^1(M_1(N)/K_0(N),L_\Lambda(1-k))\map{} 
H^1(M_\mathrm{Ih}(N)/K_\mathrm{Ih}(N),L_\Lambda(1-k)),
\end{displaymath}
which is injective as $K_\mathrm{Ih}(N)$ and 
$K_\mathrm{Ih}(\ell)$ are linearly
disjoint over $K(1)$, so that $L_\Lambda(1-k)$ has no
$\mathrm{Gal}(K(1)^\mathrm{sep}/K_\mathrm{Ih}(N))$ invariants.  The kernel of
(\ref{ihara restriction iii}) therefore injects into the kernel of 
(\ref{ihara restriction}), which is trivial by Corollary
\ref{IR cor}.  Thus (\ref{ihara restriction ii}) 
is injective and the proposition is proved.
\end{proof}

The following is our analogue of \cite[Proposition 4.4]{cornut02}.

\begin{Cor}\label{Cor:Ihara application}
Assume $\ell\nmid \varphi(N)$.  The $\Lambda$-augmented Kummer map
\begin{displaymath}
\mathcal{A}_\Lambda^\circ (Z_1(N);Y_1(N))^\Delta\map{}
H^1\big(\mathbb{F}^\mathrm{al}/\mathbb{F}, 
\tilde{H}^1(Y_1(N)_{/\mathbb{F}^\mathrm{al}},
\mathcal{L}_\Lambda)  (k)\big)^\Delta
\end{displaymath}
of Definition  \ref{kummer} is surjective.
\end{Cor}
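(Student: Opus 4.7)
The plan is to recognize that the $\Lambda$-augmented Kummer map, when restricted to $\mathcal{A}_\Lambda^\circ(Z_1(N);Y_1(N))$, factors through exactly the map studied in Proposition \ref{big ihara}. Indeed, by Definition \ref{kummer}, for any closed point $z$ the Kummer map on $\mathcal{A}_\Lambda^\circ(E_z^{\mathrm{univ}})$ was defined as the composition
\begin{displaymath}
\mathcal{A}_\Lambda^\circ(E_z^{\mathrm{univ}}) \cong H^2_z(Y_1(N),\mathcal{L}_\Lambda(k)) \map{} H^2(X_1(N),j_*\mathcal{L}_\Lambda(k)) \cong H^1\bigl(\mathbb{F}^\mathrm{al}/\mathbb{F},\tilde{H}^1(Y_1(N)_{/\mathbb{F}^\mathrm{al}},\mathcal{L}_\Lambda)(k)\bigr),
\end{displaymath}
where the outer isomorphisms come from Lemmas \ref{purity} and \ref{HS}. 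Summing over all $z\in Z_1(N)$ and using the identification (\ref{moduli cycles}) identifies the source with $\mathcal{A}_\Lambda^\circ(Z_1(N);Y_1(N))$ and the first term with $H^2_{Z_1(N)}(Y_1(N),\mathcal{L}_\Lambda(k))$.

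Next, I would invoke the $\Delta$-equivariance of the entire construction, noted in the paragraph following Definition \ref{kummer}: the action of $\Delta$ by diamond automorphisms is compatible with the purity isomorphism, the cohomology-with-support map, and the Hochschild-Serre identification. So after taking $\Delta$-invariants, the Kummer map becomes
\begin{displaymath}
\mathcal{A}_\Lambda^\circ(Z_1(N);Y_1(N))^\Delta \cong H^2_{Z_1(N)}(Y_1(N),\mathcal{L}_\Lambda(k))^\Delta \map{} H^2(X_1(N),j_*\mathcal{L}_\Lambda(k))^\Delta \cong H^1\bigl(\mathbb{F}^\mathrm{al}/\mathbb{F}, \tilde{H}^1(Y_1(N)_{/\mathbb{F}^\mathrm{al}},\mathcal{L}_\Lambda)(k)\bigr)^\Delta.
\end{displaymath}
The hypothesis $\ell\nmid\varphi(N)=|\Delta|$ makes $|\Delta|$ invertible in $\Lambda$, so $\Delta$-invariants is an exact functor on $\Lambda[\Delta]$-modules and in particular preserves the surjectivity of the middle arrow already established in Proposition \ref{big ihara}. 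Composing, we conclude the augmented Kummer map is surjective on $\Delta$-invariants.

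There is essentially no obstacle once this factorization is written down; the only mild verification is that the isomorphism of Lemma \ref{HS} is natural enough that the cohomology-with-support map lands in its source, which is immediate from the construction in (\ref{relative}) and (\ref{well known}). Everything else is assembled from results already in hand.
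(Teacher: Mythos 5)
Your proof is correct and follows the paper's own argument exactly: identify the source with $H^2_{Z_1(N)}(Y_1(N),\mathcal{L}_\Lambda(k))$ via Lemma \ref{purity}, observe the Kummer map factors through $H^2(X_1(N),j_*\mathcal{L}_\Lambda(k))$, apply Proposition \ref{big ihara}, and identify the target via Lemma \ref{HS}. The only superfluous step is the appeal to exactness of taking $\Delta$-invariants: Proposition \ref{big ihara} is already stated as surjectivity of the map on $\Delta$-invariants, so no such argument is needed.
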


\begin{proof}
Lemma \ref{purity} gives isomorphisms
\begin{displaymath}
\mathcal{A}_\Lambda^\circ (Z_1(N);Y_1(N))\cong 
\bigoplus_{z\in Z_1(N)} H^2_z(Y_1(N),\mathcal{L}_\Lambda(k))
\cong H^2_{Z_1(N)}(Y_1(N),\mathcal{L}_\Lambda(k))
\end{displaymath}
which restrict to isomorphisms of $\Delta$-invariants.
The claim is now immediate from Lemma \ref{HS} and
Proposition \ref{big ihara}.
\end{proof}

%%%%%%%%%%%%%%%%%%%%%%%%%%%%%%%%%%%%%%%%%%%%%%%%%%%%%%%%%%%%

\subsection{Degeneracy maps on supersingular points}

%%%%%%%%%%%%%%%%%%%%%%%%%%%%%%%%%%%%%%%%%%%%%%%%%%%%%%%%%%%%

Suppose $M=rM'$ for a prime $r$.  The following theorem and its proof
are based on work of Ribet \cite[Theorem 3.15]{ribet}.

\begin{Prop}\label{degen appendix}
Assume $\ell\nmid\varphi(N)$ and $\ell>2k-2$, and abbreviate
\begin{displaymath}
\mathcal{Z}(M)=\mathcal{A}_\Lambda^\circ(Z_1(N,M);Y_1(N,M))^\Delta
\end{displaymath}
and similarly for $M'$. The sum of the degeneracy maps of \S \ref{degen}
\begin{displaymath}
\alpha^M_{M'}\oplus\beta^M_{M'}:
\mathcal{Z}(M)\map{}\mathcal{Z}(M')\oplus\mathcal{Z}(M')
\end{displaymath}
is surjective.
\end{Prop}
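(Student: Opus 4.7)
The plan is to follow Ribet's proof of the un-augmented analogue \cite[Theorem 3.15]{ribet}, carrying the augmentation data along. Fix a supersingular elliptic curve $E_0$ over $\mathbb{F}$ and set $B = \mathrm{End}_{\mathbb{F}^\mathrm{al}}(E_0) \otimes \mathbb{Q}$, the definite quaternion algebra ramified at $q$ and $\infty$. Lemma \ref{local moduli} together with the Deuring parametrization identifies supersingular $\Gamma_1(N,M)$-structures with an adelic double coset space
\begin{displaymath}
Z_1(N,M) \;\longleftrightarrow\; B^\times \backslash \widehat{B}^\times / U_{N,M},
\end{displaymath}
where $U_{N,M} \subset \widehat{B}^\times$ is the compact open subgroup stabilizing a fixed $\Gamma_1(N,M)$-structure on $E_0$. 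Forming $\mathcal{A}_\Lambda(E_0)$-valued functions and taking invariants, one then obtains
\begin{displaymath}
\mathcal{Z}(M) \;\cong\; \mathrm{Map}\bigl(B^\times \backslash \widehat{B}^\times,\; \mathcal{A}_\Lambda(E_0)\bigr)^{U_{N,M}\times \Delta},
\end{displaymath}
where $U_{N,M}$ acts on $\mathcal{A}_\Lambda(E_0)$ only through its $\ell$-component via the natural action of $R^\times_\ell$ by symmetric powers, twisted by $\det^{1-k}$.

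The subgroups $U_{N,M}$ and $U_{N,M'}$ coincide away from $r$. At $r$, since $r \neq q$ we have $B_r^\times \cong \mathrm{GL}_2(\mathbb{Q}_r)$; the component $U_{N,M',r}$ is $\mathrm{GL}_2(\mathbb{Z}_r)$ and $U_{N,M,r}$ is an Iwahori subgroup. Under the adelic identification, $\alpha^M_{M'}$ and $\beta^M_{M'}$ correspond to the two natural source/target maps sending edge-functions on the Bruhat--Tits tree of $\mathrm{PGL}_2(\mathbb{Q}_r)$ to vertex-functions: concretely, one is pullback along the inclusion of compact opens and the other is the same pullback composed with right translation by $\mathrm{diag}(1,r) \in B_r^\times$, matching the definition of $\beta$ via quotient by the order-$r$ subgroup $D_0$. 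The augmentation target $\mathcal{A}_\Lambda(E_0)$ is acted on trivially by $U_{N,M,r}$ (as $\ell \neq r$), so the augmentation decouples from the local analysis at $r$. Strong approximation \cite[Theoreme III.4.3]{vigneras} for $B^{\times,1}$ then implies that $B^\times$ acts transitively on vertices in each connected component of the $r$-tree modulo the center, and the surjectivity of the combined edge-to-vertex map follows.

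The hypothesis $\ell \nmid \varphi(N)$ ensures that taking $\Delta$-invariants is exact, so the pre-$\Delta$ surjectivity descends; $\ell > 2k-2$, applied via Lemma \ref{irreducible} (or the underlying fact that $\mathrm{Sym}^{2k-2}\Lambda^2$ has no proper nonzero $\mathrm{SL}_2(\Lambda)$-stable submodules), ensures that the lift produced by strong approximation takes values in all of $\mathcal{A}_\Lambda(E_0)$ rather than being trapped in a proper stable submodule. The main obstacle is the careful bookkeeping of the quaternionic dictionary with augmentation, in particular verifying that right translation by $\mathrm{diag}(1,r)$ really does implement $\beta^M_{M'}$ under the chosen identifications and that the $\Delta$-action commutes with the local degeneracies at $r$; once these compatibilities are set up, Ribet's scalar argument carries over on each irreducible constituent of $\mathcal{A}_\Lambda(E_0)$.
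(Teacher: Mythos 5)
There is a genuine gap at the final step. Your reduction to the adelic double coset space and the identification of $\alpha^M_{M'}$ and $\beta^M_{M'}$ with the two edge-to-vertex maps on the tree at $r$ are reasonable, but the assertion that transitivity of $B^\times$ on vertices (i.e.\ connectedness of the $r$-Brandt graph, which is what strong approximation gives) implies surjectivity of the \emph{combined} map is false. For any connected graph, the map $e\mapsto (s(e),t(e))$ from edge-chains to (vertex-chains)$\,\oplus\,$(vertex-chains) has nontrivial cokernel with constant coefficients: the functional $(a,b)\mapsto \deg(a)-\deg(b)$ kills the image, since each edge contributes $1-1=0$. This is precisely why Ribet's \cite[Theorem 3.15]{ribet} asserts surjectivity only up to an Eisenstein error in the weight-two case. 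The entire content of the present proposition is that the augmentation module $\mathcal{A}_\Lambda(E_0)\cong(\mathrm{Sym}^{2k-2}\Lambda^2)(1-k)$ kills that obstruction, and your invocation of $\ell>2k-2$ (``so that the lift is not trapped in a proper stable submodule'') does not engage with this point: as written, your argument would equally well ``prove'' surjectivity for $k=1$, where the statement is false.

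The paper's proof supplies the missing mechanism by showing the image contains $\mathcal{Z}(M')\oplus 0$ and $0\oplus\mathcal{Z}(M')$ separately. Given a supersingular $\Gamma_1(N,M')$ structure $(E,x,\Theta)$ and an endomorphism $f$ of $E$ of degree $r^{2n}$ preserving the underlying $\Gamma_0(NM')$ structure, factoring $f$ into degree-$r$ steps produces a telescoping element $T\in\mathcal{Z}(M)$ with $\beta^M_{M'}(T)=0$ and $\alpha^M_{M'}(T)=\mathrm{N}_\Delta\cdot\big(E,x,\Theta-r^{2n(1-k)}f(\Theta)\big)$. One then needs Lemma \ref{irreducible appendix}: elements of the form $\Theta-\deg(f)^{1-k}f(\Theta)$ generate all of $\mathcal{A}_\Lambda(E)$. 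That is where strong approximation and the irreducibility of $\mathrm{Sym}^{2k-2}\Lambda^2$ under $\mathrm{SL}_2(\Lambda)$ (hence the hypothesis $\ell>2k-2$) genuinely enter --- as the vanishing of coinvariants for $\rho^*$ restricted to the norm-one subgroup of $R[1/r]^\times$, not as a statement about where an adelic lift lands. If you want to salvage your adelic formulation, you must replace ``surjectivity follows from transitivity'' by an argument that the cokernel of the combined map is the space of coinvariants of the local system, and then show those coinvariants vanish; this is equivalent to, but no shorter than, the paper's explicit construction.
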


\begin{proof}
Let $\mathrm{N}_\Delta$ be the norm element in the group algebra
$\Lambda[\Delta]$.
Suppose we are given a $\Lambda$-augmented $\Gamma_1(N,M')$
structure over $\mathbb{F}^\mathrm{al}$
\begin{displaymath}
(E,x,\Theta)\in \mathcal{A}_\Lambda(\Gamma_1(N,M'))
\end{displaymath}
with $E$ supersingular, and a degree $r^{2n}$ endomorphism $f:E\map{}E$ 
preserving the $\Gamma_0(NM')$ structure underlying $x$.  
Factor the endomorphism $f:E\map{}E$
as 
\begin{displaymath}
E=E_0\map{h_1}E_1\map{h_2}\cdots\map{h_{2n-1}}E_{2n-1}
\map{h_{2n}}E_{2n}=E
\end{displaymath}
with each $h_i$ of degree $r$.  Set $f_i=h_i\circ\cdots\circ h_1:E\map{}E_i$
and let $x_i=f_i(x)$ be the induced $\Gamma_1(N,M')$ structure
on $E_i$.  For $i<2n$ let $y_i$ be the $\Gamma_1(N,M)$ structure on $E_i$
obtained by adding the $\Gamma_0(r)$ structure $\mathrm{ker}(h_{i+1})$
to $x_i$, and for $i>0$ let $y_i^\vee$ be the $\Gamma_1(N,M)$ structure
obtained by adding the $\Gamma_0(r)$ structure $\mathrm{ker}(h_i^\vee)$.
Define 
\begin{displaymath}
\Theta_i=r^{i(1-k)}f_i(\Theta)\in\mathcal{A}_\Lambda(E_i).
\end{displaymath}
A simple calculation of the degeneracy maps
of \S \ref{degen} 
%\begin{eqnarray*}
%\mathrm{N}_\Delta\cdot \alpha^M_{M'}(E_i,y_i,\Theta_i) &=&
%\mathrm{N}_\Delta\cdot (E_i,x_i,\Theta_i)\\
%\mathrm{N}_\Delta\cdot \beta^M_{M'}(E_i,y_i,\Theta_i) &=&
%r^{k-1}\mathrm{N}_\Delta\cdot (E_{i+1},x_{i+1},\Theta_{i+1})\\
%\mathrm{N}_\Delta\cdot \alpha^M_{M'}(E_i,y_i^\vee,\Theta_i) &=&
%\mathrm{N}_\Delta\cdot (E_i,x_i,\Theta_i)\\
%\mathrm{N}_\Delta\cdot \beta^M_{M'}(E_i,y_i^\vee,\Theta_i) &=&
%r^{k-1}\mathrm{N}_\Delta\cdot (E_{i-1},x_{i-1},\Theta_{i-1}).
%\end{eqnarray*}
shows that the element 
\begin{displaymath}
T = T_{E,x,f,\Theta} \in \mathcal{A}_\Lambda(\Gamma_1(N,M))
\end{displaymath}
defined by
\begin{eqnarray*}
T &=&
\mathrm{N}_\Delta\big[(E_0,y_0,\Theta_0)-(E_2,y_2^\vee,\Theta_2)
+ (E_2,y_2,\Theta_2)-(E_4,y_4^\vee,\Theta_4) + \\
& &
\ldots + 
(E_{2n-2},y_{2n-2},\Theta_{2n-2})-(E_{2n},y_{2n}^\vee,\Theta_{2n})\big]
\end{eqnarray*}
satisfies $\beta^M_{M'}(T)= 0$ and
\begin{displaymath}
\alpha^M_{M'}(T) =
\mathrm{N}_\Delta\cdot \big(E,x,\Theta-r^{2n(1-k)}f(\Theta)\big).
\end{displaymath}
It follows from Lemma \ref{local moduli} 
(with $N$ replaced by $\mathbf{N}=NM$)
that $T$ is fixed by the action of 
$\mathrm{Gal}(\mathbb{F}^\mathrm{al}/\mathbb{F})$, and so
defines an element of $\mathcal{Z}(M)$.

We pause for a 

\begin{Lem}\label{irreducible appendix}
With $(E,x)$ as above, let $D$ denote the 
$\Gamma_0(NM')$ structure underlying the $\Gamma_1(N,M')$
structure $x$. The $\Lambda$-module $\mathcal{A}_\Lambda(E)$ has a set
of generators $A_{E,x}$ such that each $a\in A_{E,x}$ has the form
$a=\Theta_a-\deg(f_a)^{(1-k)}f_a(\Theta_a)$
for some $\Theta_a\in\mathcal{A}_\Lambda(E)$
and some endomorphism  $f_a:E\map{}E$ such that
$f_a(D)=D$ and $\deg(f_a)$ is an even power of $r$.
\end{Lem}

\begin{proof}
Set $R=\mathrm{End}_{\mathbb{F}^\mathrm{al}}(E,D)$, a level 
$\mathrm{N}$ Eichler order in 
a quaternion algebra ramified exactly at $q$ and $\infty$, and 
let $\Gamma=R[1/r]^\times$.  Let $\rho$ denote the 
natural action of $R$ on $\mathcal{A}_\Lambda(E)$, extend $\rho$ to an action
of $\Gamma$ (recall $\ell\nmid M$ so that $r\not=\ell$), and 
let $\rho^*=\rho\otimes\det^{1-k}$ be the twist 
such that $\mathbb{Z}[1/r]^\times\subset\Gamma$ acts trivially.  All of this 
notation is exactly as in \S \ref{reduction} with $p$ replaced by $r$.
As in the proof of Lemma \ref{irreducible}, $\mathcal{A}_\Lambda(E)$
has no submodules stable under the restriction of $\rho^*$ to the
subgroup of norm one elements $\Gamma^1\subset \Gamma$.
As the set 
\begin{displaymath}
A_{E,x}=\{ \Theta-\rho^*(\gamma)\Theta\mid  \Theta\in\mathcal{A}_\Lambda(E),
\gamma\in\Gamma^1\}
\end{displaymath}
is stable under the action of $\rho^*(\Gamma^1)$, it must generate
$\mathcal{A}_\Lambda(E)$.
For each 
\begin{displaymath}
\Theta-\rho^*(\gamma)\Theta\in A_{E,x},
\end{displaymath}
let $f=r^n\gamma$ for $n$ large enough that
$r^n\gamma\in R$.  Then $f$ has degree $r^{2n}$ and 
\begin{displaymath}
\Theta-\deg(f)^{1-k}f(\Theta)=\Theta-\rho^*(f)\Theta=
\Theta-\rho^*(\gamma)\Theta,
\end{displaymath}
so that $A_{E,x}$ has the desired properties.
\end{proof}

If we let $E$ vary over all supersingular elliptic curves over 
$\mathbb{F}^\mathrm{al}$,
$x$ vary over all $\Gamma_1(N,M)$ structures on $E$, and
$\Theta'$ vary over the set $A_{E,x}$ of
 Lemma \ref{irreducible appendix}, the elements
\begin{displaymath}
\mathrm{N}_\Delta\cdot (E,x,\Theta')
\in\mathcal{A}_\Lambda(\Gamma_1(N,M'))
\end{displaymath} generate the submodule $\mathcal{Z}(M')$.
Hence, by the construction of $T_{E,x,f,\Theta}$ above, there is a family
$\{T_i\}\subset \mathcal{Z}(M)$ such that
$\beta^M_{M'}(T_i)=0$ for all $i$ and such that 
$\{\alpha^M_{M'}(T_i)\}$ generates $\mathcal{Z}(M')$.  
A construction similar to that of $T$ produces a 
family with the same properties but with the roles of
$\alpha$ and $\beta$ reversed, completing the proof of Proposition 
\ref{degen appendix}.
\end{proof}

%%%%%%%%%%%%%%%%%%%%%%%%%%%%%%%%%%%%%%%%%%%%%%%%%%%%%%%%%%%%%%%%%%%%%
%%%%%%%%%%%%%%%%%%%%%%%%%%%%%%%%%%%%%%%%%%%%%%%%%%%%%%%%%%%%%%%%%%%%%

\section{Nonvanishing of Heegner classes}
\label{S: money}

%%%%%%%%%%%%%%%%%%%%%%%%%%%%%%%%%%%%%%%%%%%%%%%%%%%%%%%%%%%%%%%%%%%%%
%%%%%%%%%%%%%%%%%%%%%%%%%%%%%%%%%%%%%%%%%%%%%%%%%%%%%%%%%%%%%%%%%%%%%

Keep $K$, $E_1$, $\mathfrak{N}$, and $\mathrm{Ta}_p(E_1)\cong\mathbb{Z}_p^2$
as in \S \ref{tree}, so that $K$ is an imaginary quadratic field
in which the prime divisors of $N$ are split,
$\mathcal{O}_K/\mathfrak{N}\cong \mathbb{Z}/N\mathbb{Z}$, and $E_1$ is an elliptic curve
over $\mathbb{Q}^\mathrm{al}$ with complex multiplication by $\mathcal{O}_K$.
Let $D=\mathrm{disc}(K)$ and let $H[p^s]$, $\mathcal{G}$, and $G_0$
be as in \S \ref{intro}. Let $f\in S_{2k}(\Gamma_0(N),\mathbb{C})$,
$\Phi$, $\chi$, and $\pi_\chi$ also be as in \S \ref{intro}.
Let $\mathbf{T}$ be the $\mathbb{Z}$-algebra generated by the Hecke operators
$\{ T_m\mid (m,N)=1 \}$ and the group of diamond operators $\Delta$
acting on $S_{2k}(\Gamma_1(N),\mathbb{C})$,
so that $f$ determines an idempotent $\pi_f$ in the semi-simple
$\Phi$-algebra $\mathbf{T}\otimes_\mathbb{Z} \Phi$.

%%%%%%%%%%%%%%%%%%%%%%%%%%%%%%%%%%%%%%%%%%%%%%%%%%%%%%%%%%%%%%%%%%

\subsection{Heegner cohomology classes}
\label{S: Heegner cohomology}

%%%%%%%%%%%%%%%%%%%%%%%%%%%%%%%%%%%%%%%%%%%%%%%%%%%%%%%%%%%%%%%%%%

For each finite quotient $\Lambda$ of $\mathbb{Z}_\ell$ we have the
$\mathrm{Gal}(\mathbb{Q}^\mathrm{al}/K)$-module $W_\Lambda$ of 
(\ref{our module})
and, for each $s\ge 0$,  
the family of cohomology classes $\Omega_s(g)$ of (\ref{class param})
parametrized by $g\in \mathcal{T}_s\subset\mathcal{T}$.
If we set $W_{\mathbb{Z}_\ell}=\varprojlim W_{\mathbb{Z}/\ell^e\mathbb{Z}}$, 
then the classes
$\Omega_s(g)$ are compatible as $\Lambda=\mathbb{Z}/\ell^e\mathbb{Z}$ varies, and define 
classes
\begin{displaymath}
\Omega_s(g)\in H^1(\mathbb{Q}^\mathrm{al}/H[p^s], W_{\mathbb{Z}_\ell}(k))^\Delta,
\end{displaymath}
and also classes (denoted the same way) in the cohomology of 
$W_\Lambda=W_{\mathbb{Z}_\ell}\otimes\Lambda$
for any $\mathbb{Z}_\ell$-algebra $\Lambda$.  Other constructions made with 
$\Lambda=\mathbb{Z}/\ell^e\mathbb{Z}$ extend to any $\mathbb{Z}_\ell$-algebra $\Lambda$
in the same way. We denote by
\begin{displaymath}
\mathrm{Heeg}_s\subset  H^1\big(\mathbb{Q}^\mathrm{al}/H[p^s],W_\Phi(k)\big)^\Delta
\end{displaymath}
the $\Phi$-submodule generated by the classes $\Omega_s(g)$ 
as $g$ ranges over $\mathcal{T}_s$.
By a well known theorem of Deligne, the Hecke algebra 
$\mathbf{T}\otimes_\mathbb{Z} \Phi$
acts on $W_\Phi$, and the Galois representation 
$W_f=\pi_f W_\Phi$ is a two dimensional $\Phi$-vector space.
Set 
\begin{displaymath}
\mathrm{Heeg}_s(f)=\pi_f \mathrm{Heeg}_s.
\end{displaymath}

\begin{Thm}\label{big money}
Fix a character $\chi:G_0\map{}\Phi^\times$ and let $\pi_\chi$ be as in
\S \ref{intro}.
Suppose $\ell$ does not divide $p$, $N$, $\varphi(N)$, $\mathrm{disc}(K)$,
or $(2k-2)!$
As $s$ grows the $\Phi$-dimension of 
$\pi_\chi\mathrm{Heeg}_s(f)$
grows without bound.
\end{Thm}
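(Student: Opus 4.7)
The plan is to adapt the Cornut--Vatsal nonvanishing strategy to higher weight, using the simultaneous reduction surjectivity (Theorem \ref{chaos}), the higher weight Ihara theorem (Corollary \ref{Cor:Ihara application}), and the degeneracy-map surjectivity (Proposition \ref{degen appendix}) as the main new ingredients. The argument is by contradiction: assume $\dim_\Phi \pi_\chi\mathrm{Heeg}_s(f)\le d$ for all $s$. Since $\mathrm{Heeg}_s(f)$ is the $\Phi$-span of $\{\pi_f\Omega_s(g)\mid g\in\mathcal{T}_s\}$ and $\pi_\chi=\sum_{\sigma\in G_0}\chi(\sigma)\sigma$, the bound forces, for every $s$ and every $d{+}1$ choices in $\mathcal{T}_s$, a nontrivial $\Phi$-linear relation among the classes $\pi_\chi\pi_f\Omega_s(g)$. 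The goal is to derive a contradiction by showing that the images of these classes in a suitable supersingular Galois cohomology module already have unbounded $\Phi$-dimension.

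I would pass to this supersingular image by local restriction at carefully chosen inert primes. Choose a finite set $\mathfrak{Q}$ of rational primes inert in $K$ and prime to $\ell p\mathbf{N}$, and fix a system $\mathcal{S}_0\subset\mathrm{Gal}(\mathbb{Q}^\mathrm{al}/K)$ of representatives of $G_0$. For each $(\sigma,\mathfrak{q})\in\mathcal{S}_0\times\mathfrak{Q}$, Proposition \ref{Prop:reduction} identifies the restriction of $\Omega_s(g)^\sigma$ at a place of $H[p^s]$ above $\mathfrak{q}$ with the inflation of the augmented Kummer class of the $\Lambda$-augmented $\Gamma_0(\mathbf{N})$ structure $\mathrm{red}_\mathfrak{q}(E_g^\sigma,\mathbf{C}_g^\sigma,\Theta_g^\sigma)$. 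Weighting by $\chi(\sigma)$ and summing over $\sigma\in\mathcal{S}_0$, the $\chi$-component of $\Omega_s(g)$ pushed to the semilocal cohomology at the places above $\mathfrak{Q}$ is computed by the simultaneous reduction map $\mathrm{Red}_{\mathcal{S}_0,\mathfrak{Q}}$ of (\ref{SRM}) composed with the augmented Kummer map at each $\mathfrak{q}$. The hypothesized relation thus becomes a relation among the $\pi_f$-projections of $\mathrm{Red}_{\mathcal{S}_0,\mathfrak{Q}}(g_1),\ldots,\mathrm{Red}_{\mathcal{S}_0,\mathfrak{Q}}(g_{d+1})$, post-composed into the $f$-isotypic Galois cohomology.

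The crux is then to enlarge $\mathcal{S}_0$ to a chaotic set $\mathcal{S}\subset\mathrm{Gal}(\mathbb{Q}^\mathrm{al}/K)$ whose cardinality grows without bound. Since $\mathcal{G}/G_0\cong\mathbb{Z}_p$, Definition \ref{chaotic} excludes only Artin symbols with trivial $p$-component, which form a bounded set; so arbitrarily large chaotic $\mathcal{S}$ exist whose classes in $\mathrm{Gal}(H[p^s]/K)$ extend $\mathcal{S}_0$ once $s$ is large. Applying Theorem \ref{chaos}, the simultaneous reduction $\mathrm{Red}_{\mathcal{S},\mathfrak{Q}}$ surjects onto $\bigoplus_{(\sigma,\mathfrak{q})}\mathcal{Z}_\mathfrak{q}(M)$; combining with Proposition \ref{degen appendix} to descend from level $\mathbf{N}=NM$ to level $N$ (iterating in the prime divisors of $M\mid\mathrm{disc}(K)$) and with Corollary \ref{Cor:Ihara application} to identify the supersingular cycle module with the semilocal Galois cohomology of $W_\Lambda(k)$, the composite $\Lambda[\mathcal{T}_s]\to\bigoplus_{(\sigma,\mathfrak{q})}H^1(\mathbb{F}_\mathfrak{q}^\mathrm{al}/\mathbb{F}_\mathfrak{q},W_\Lambda(k))^\Delta$ has image of dimension comparable to $|\mathcal{S}|\cdot|\mathfrak{Q}|\cdot\dim W_\Lambda$. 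Because every map in this composition is $\mathbf{T}$-equivariant, applying the idempotent $\pi_f$ still yields unbounded image as $|\mathcal{S}|\to\infty$, contradicting the bound $d$.

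The principal obstacle is the level descent: the constructions of Sections \ref{S: reduction of the family}--\ref{append} naturally take place at level $\mathbf{N}=NM$ with $M\mid\mathrm{disc}(K)$, so that the inert primes needed for simultaneous reduction at $\mathfrak{Q}$ are available, whereas the classes $\pi_f\Omega_s(g)$ and the Hecke projector $\pi_f$ are attached to $\Gamma_1(N)$. Proposition \ref{degen appendix} provides the needed surjectivity of the degeneracy maps on supersingular modules, but the bookkeeping to verify that this descent is simultaneously compatible with the $\chi$-projection, the $\pi_f$-projection, and the Galois action is the most technical point. A secondary concern is the precise exploitation of the hypothesis $\ell\nmid\varphi(N)\cdot(2k-2)!$: it is used in Corollary \ref{Cor:Ihara application} (via $\ell\nmid\varphi(N)$) and in Lemma \ref{irreducible} underlying Theorem \ref{chaos} (via $\ell>2k-2$, ensuring $\mathrm{SL}_2(\Lambda)$-irreducibility of $\mathrm{Sym}^{2k-2}\Lambda^2$).
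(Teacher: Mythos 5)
Your overall architecture (reduce the classes at inert primes, use Theorem \ref{chaos} for surjectivity of simultaneous reduction, Corollary \ref{Cor:Ihara application} to convert supersingular cycles into local Galois cohomology, and Proposition \ref{degen appendix} for the level descent) matches the paper, but there are two genuine gaps. First, the set $\mathcal{S}$ you need is forced to be a set of representatives of the \emph{whole} group $G_0$, because $\pi_\chi=\sum_{\sigma\in G_0}\chi(\sigma)\sigma$ and the localization of $\pi_\chi\Omega_s(g)$ at $\mathfrak{q}$ is the $\chi$-weighted sum of $\mathrm{loc}_{\sigma,\mathfrak{q}}(g)$ over exactly those $\sigma$. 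Such a set is in general \emph{not} chaotic: if $\mathfrak{r}\mid\mathrm{disc}(K)$ has nontrivial Frobenius $\sigma_r$ in $\mathcal{G}$, then two representatives differing by $\sigma_r$ have quotient equal to the Artin symbol of an idele supported at $r\neq p$, i.e.\ with trivial $p$-component, violating Definition \ref{chaotic}. So Theorem \ref{chaos} cannot be applied to your $\mathcal{S}$, and your claim that one can ``enlarge $\mathcal{S}_0$ to a chaotic set'' representing $G_0$ is false precisely when the genus subgroup $G_1\subset G_0$ generated by these Frobenii is nontrivial. This is not bookkeeping; it is the reason the level-$M$ structure exists. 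The paper applies Theorem \ref{chaos} only to a chaotic set $\mathcal{S}_0$ of representatives of $G_0/G_1$ at level $\mathbf{N}=NM$, and then realizes the missing action of $G_1$ geometrically: by the main theorem of complex multiplication, the degeneracy map $\lambda_d=d^{1-k}\beta^d_1\circ\alpha^M_d$ sends $(E_g^\sigma,\mathbf{C}_g^\sigma,\Theta_g^\sigma)$ to $(E_g^\sigma,C_g^\sigma,\Theta_g^\sigma)^{\sigma_d}$ (Lemma \ref{main surjectivity}), so that the composite of $\mathrm{Red}_{\mathcal{S}_0,\mathfrak{Q}}$ with $\oplus_d\lambda_d$ \emph{is} the level-one simultaneous reduction for the full, non-chaotic $\mathcal{S}=\mathcal{S}_1\mathcal{S}_0$.

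Second, your mechanism for unboundedness is wrong: you let $|\mathcal{S}|\to\infty$, but for fixed $\mathfrak{Q}$ the target $\bigoplus_{\mathfrak{q}\in\mathfrak{Q}}H^1(\mathbb{F}^\mathrm{al}_\mathfrak{q}/\mathbb{F}_\mathfrak{q},\pi_fW(k))$ is a fixed finite-dimensional space of dimension at most $2\#\mathfrak{Q}$, and elements $\sigma$ beyond $G_0$ contribute nothing to $\pi_\chi$. The dimension must be driven up by enlarging $\mathfrak{Q}$, and for that one needs each summand $H^1(\mathbb{F}^\mathrm{al}_\mathfrak{q}/\mathbb{F}_\mathfrak{q},\overline{T})\cong\overline{T}/(\mathrm{Frob}_\mathfrak{q}-1)\overline{T}$ to be nonzero; a priori it could vanish for a random inert $q$. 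The paper secures this by a Chebotarev choice: take $q$ whose absolute Frobenius acts as complex conjugation on $K(\overline{T})$, so $\mathrm{Frob}_\mathfrak{q}$ acts trivially on $\overline{T}$ and each prime contributes a copy of $\overline{T}$. You also omit the final passage from residual (mod $\mathfrak{m}$) surjectivity to growth of the $\Phi$-dimension, which requires both Nakayama's lemma (to lift the mod-$\ell$ surjectivity of the diagram (\ref{reduction diagram}) to $\mathbb{Z}_\ell$-coefficients) and the uniform bound on the $R$-torsion of $H^1(\mathbb{Q}^\mathrm{al}/H[p^s],T)$ coming from the Weil bounds (Lemma \ref{finite torsion}).
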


\begin{proof}
Let $r_1, r_2, \ldots $ be the prime
divisors of $D$ and let $\mathfrak{r}_i$ denote the unique prime
of $K$ above $r_i$.  Let $G_1\subset \mathcal{G}$ be the 
subgroup generated by the Frobenius classes of the $\mathfrak{r}_i$,
so that $G_1$ has exponent $2$, and in particular $G_1\subset G_0$.
Reordering the $r_i$ if needed, choose $n$ such that the 
Frobenius classes of $\mathfrak{r}_1,\ldots, \mathfrak{r}_n$
form a basis for the $\mathbb{Z}/2\mathbb{Z}$-vector space $G_1$.  
Set $M=r_1\cdots r_n$, so that divisors of $M$
are naturally in bijection with the elements of $G_1$.  We denote this
bijection by $d\mapsto \sigma_d$.
Set $\mathfrak{M}=\mathfrak{r}_1\cdot\ldots\cdot \mathfrak{r}_k$.  
For each $\sigma\in G_1$ fix once and for all an extension of
$\sigma$ to $\mathrm{Gal}(\mathbb{Q}^\mathrm{al}/K)$, 
and let $\mathcal{S}_1$ denote
the set of extensions so chosen.  Let 
$\mathcal{S}_0\subset\mathrm{Gal}(\mathbb{Q}^\mathrm{al}/K)$
be chosen so that restriction to $H[p^\infty]$ takes 
$\mathcal{S}_0$ injectively into $G_0$ with image equal to a 
set of representatives for the cosets $G_0/G_1$.  
Let $\mathcal{S}=\{ \sigma\tau\mid \sigma\in \mathcal{S}_1,
\tau\in\mathcal{S}_0\}$.

As in \S \ref{S: reduction of the family}, let $\mathfrak{Q}$
be a finite set of rational primes, all inert in $K$ and all
prime to $\ell p \mathbf{N}$, and fix extensions of these places
to $\mathbb{Q}^\mathrm{al}$.  We will continue our practice of
writing $\mathfrak{q}\in\mathfrak{Q}$ to indicate that 
$\mathfrak{q}$ is the prime
of $K$ above the rational prime $q\in\mathfrak{Q}$.
For each $\mathfrak{q}\in\mathfrak{Q}$ define, 
using the notation (\ref{abbreviation}),
$\lambda_d:\mathcal{Z}_\mathfrak{q}(M)\map{}\mathcal{Z}_\mathfrak{q}(1)$
by $\lambda_d=d^{1-k}\cdot(\beta^d_1\circ \alpha^M_d)$
where $\alpha_d^M$ and $\beta_1^d$ are the degeneracy maps of \S \ref{degen}.
Consider the composition
\begin{equation}\label{SRMII}
(\mathbb{Z}/\ell\mathbb{Z})[\mathcal{T}]\map{}
\bigoplus_{(\sigma,\mathfrak{q})\in\mathcal{S}_0\times\mathfrak{Q}}
\mathcal{Z}_\mathfrak{q}(M)
\map{\oplus_{d\mid M} \lambda_d}
\bigoplus_{(\sigma,\mathfrak{q})\in\mathcal{S}_0\times\mathfrak{Q}}
\bigoplus_{d\mid M} \mathcal{Z}_\mathfrak{q}(1)
\map{}
\bigoplus_{(\sigma,\mathfrak{q})\in\mathcal{S}\times\mathfrak{Q}} 
\mathcal{Z}_\mathfrak{q}(1)
\end{equation}
in which the first arrow is the map $\mathrm{Red}_{\mathcal{S}_0,\mathfrak{Q}}$
of (\ref{SRM}), and the final arrow rearranges the sum, taking 
the summand $(\sigma,\mathfrak{q},d)$
to the summand $(\sigma_d \sigma,\mathfrak{q})$.

\begin{Lem}\label{main surjectivity}
The composition (\ref{SRMII}) is surjective, and is equal to the 
simultaneous reduction map (\ref{SRM}) defined with $M=1$.  
\end{Lem}

\begin{proof}
By \cite[Lemma 4.5]{cornut02} the set $\mathcal{S}_0$ is chaotic
in the sense of Definition \ref{chaotic}, and so Theorem \ref{chaos}
gives the surjectivity of $\mathrm{Red}_{\mathcal{S}_0,\mathfrak{Q}}$.
The surjectivity of $\oplus_{d|M}\lambda_d$ is an easy induction 
using Proposition \ref{degen appendix}.

Fix $g\in\mathcal{T}$ and let $A$ be an elliptic curve over $\mathbb{Q}^\mathrm{al}$ with 
complex multiplication by $\mathcal{O}_g\subset K$. If $d|M$, let 
$\mathfrak{d}$ be the unique $\mathcal{O}_g$-ideal
of norm $d$ and set $A'=A/A[\mathfrak{d}]$. The main theorem
of complex multiplication provides an isomorphism 
$A'\cong A^{\sigma_d}$ such that the composition
$A\map{}A'\cong A^{\sigma_d}$ agrees with $P\mapsto P^{\sigma_d}$
for all torsion points $P\in A(\mathbb{Q}^\mathrm{al})$ of order prime to $d$.
Thus
\begin{displaymath}
\lambda_d(E_g^\sigma,\mathbf{C}_g^\sigma,\Theta_g^\sigma)
= (E_g^{\sigma}, C_g^\sigma,\Theta_g^\sigma)^{\sigma_d}
\end{displaymath}
for any $\sigma\in\mathrm{Gal}(\mathbb{Q}^\mathrm{al}/K)$, and the lemma follows.
\end{proof}

For each $(\sigma,\mathfrak{q})\in\mathcal{S}\times\mathfrak{Q}$
and each $g\in\mathcal{T}_s$ the cohomology class $\Omega_s(g)$
is unramified at $\mathfrak{q}$, and, since the residue field of 
$H[p^s]$ at $\mathfrak{q}$ is $\mathbb{F}_\mathfrak{q}$, the localization of 
$\Omega_s(g)$ at $\mathfrak{q}$ defines   a class
\begin{displaymath}
\mathrm{loc}_{\sigma,\mathfrak{q}}(g)\in 
H^1(\mathbb{F}_\mathfrak{q}^\mathrm{al}/\mathbb{F}_\mathfrak{q},
W_{\mathbb{Z}_\ell}(k))^\Delta.
\end{displaymath}
Summing over all $(\sigma,\mathfrak{q})\in \mathcal{S}\times\mathfrak{Q}$
and extending linearly to the free $\mathbb{Z}_\ell$-module on $\mathcal{T}_s$
defines
\begin{displaymath}
\mathrm{loc}_{\mathcal{S},\mathfrak{Q}}: \mathbb{Z}_\ell[\mathcal{T}_s]
\map{}\bigoplus_{(\sigma,\mathfrak{q})\in \mathcal{S}\times\mathfrak{Q}}
H^1(\mathbb{F}_\mathfrak{q}^\mathrm{al}/\mathbb{F}_\mathfrak{q},
W_{\mathbb{Z}_\ell}(k))^\Delta.
\end{displaymath}
This map is compatible with the natural inclusions as $s$ varies. 
Proposition \ref{Prop:reduction} 
gives the commutative diagram
\begin{equation}\label{reduction diagram}
\xymatrix{
{\mathbb{Z}_\ell[\mathcal{T}_s]}
\ar[d]_{\mathrm{Red}_{\mathcal{S},\mathfrak{Q}}} 
\ar[r]^{\mathrm{loc}_{\mathcal{S},\mathfrak{Q}}} &
{\bigoplus
H^1(\mathbb{F}_\mathfrak{q}^\mathrm{al}/\mathbb{F}_\mathfrak{q},
W_{\mathbb{Z}_\ell}(k))^\Delta}\ar[d]\\
{\bigoplus
\mathcal{A}_{\mathbb{Z}/\ell\mathbb{Z}}^\circ(Z_1(N)_\mathfrak{q}; 
Y_1(N)_{/\mathbb{F}_\mathfrak{q}})^\Delta} \ar[r]
& 
{\bigoplus
H^1(\mathbb{F}_\mathfrak{q}^\mathrm{al}/\mathbb{F}_\mathfrak{q},
W_{\mathbb{Z}/\ell\mathbb{Z}}(k))^\Delta}
}\end{equation}
where all sums are over $\mathcal{S}\times\mathfrak{Q}$,
$\mathrm{Red}_{\mathcal{S},\mathfrak{Q}}$ is the restriction of
the simultaneous reduction map (\ref{SRM}), with $M=1$, to 
$\mathbb{Z}_\ell[\mathcal{T}_s]$, 
and the bottom horizontal arrow 
is the $\mathbb{Z}/\ell\mathbb{Z}$-augmented Kummer 
map of Definition \ref{kummer}.
By Corollary \ref{Cor:Ihara application} the bottom horizontal arrow is 
surjective, and by Lemma \ref{main surjectivity} the restriction of
$\mathrm{Red}_{\mathcal{S},\mathfrak{Q}}$ to 
$\mathbb{Z}_\ell[\mathcal{T}_s]$ is surjective for
$s\gg 0$.
The same argument as \cite[Lemma 2.2]{nek:euler} gives the exactness of
\begin{displaymath}
0\map{} W_{\mathbb{Z}_\ell}(k)\map{\ell}W_{\mathbb{Z}_\ell}(k)\map{}
W_{\mathbb{Z}/\ell\mathbb{Z}}(k)\map{}0,
\end{displaymath}
and taking $\mathbb{F}_\mathfrak{q}^\mathrm{al}/\mathbb{F}_\mathfrak{q}$ 
cohomology shows that the 
right vertical arrow is surjective with kernel equal to the image
of multiplication by $\ell$.  Applying Nakayama's lemma, we have proved

\begin{Lem}\label{final reduction I}
For $s\gg 0$ the restriction of 
$\mathrm{loc}_{\mathcal{S},\mathfrak{Q}}$ to $\mathbb{Z}_\ell[\mathcal{T}_s]$ is surjective.
\end{Lem}

Let $R$ be the integer ring of $\Phi$, so that $W_R$ is
an $R$ lattice in $W_\Phi$ and
$\pi_f W_R$ is an $R$ lattice in $W_f=\pi_f W_\Phi$. Let 
\begin{displaymath}
\mathrm{Heeg}_{R,s} \subset H^1(\mathbb{Q}^\mathrm{al}/H[p^s], W_R(k))^\Delta
\end{displaymath} 
be the $R$ submodule generated by 
the classes $\Omega_s(g)$ for $g\in\mathcal{T}_s$,
and abbreviate 
\begin{displaymath}
T=\pi_f W_R(k)\subset W_f(k).
\end{displaymath}

\begin{Lem}\label{final reduction II}
For $s\gg 0$, the image of the composition 
\begin{displaymath}
\mathrm{Heeg}_{R,s}\map{\pi_\chi\pi_f}  
H^1(\mathbb{Q}^\mathrm{al}/H[p^s], T) \map{\oplus\mathrm{loc}_\mathfrak{q}}
\bigoplus_{\mathfrak{q}\in\mathfrak{Q}}
H^1(\mathbb{Q}^\mathrm{al}_\mathfrak{q}/K_\mathfrak{q}, T)
\end{displaymath}
is $\bigoplus_{\mathfrak{q}\in\mathfrak{Q}} 
H^1(\mathbb{F}^\mathrm{al}_\mathfrak{q}/\mathbb{F}_\mathfrak{q},T)$,
the submodule of unramified cohomology classes.
\end{Lem}

\begin{proof}
Using Proposition \ref{Prop:reduction} we see that the image of the
composition lies in the unramified cohomology, and is equal to the
image of 
\begin{eqnarray*}\lefteqn{
R[\mathcal{T}_s]\map{\mathrm{loc}_{\mathcal{S},\mathfrak{Q}}}
\bigoplus_{\mathcal{S}}\bigoplus_{\mathfrak{q}\in \mathfrak{Q}} 
H^1(\mathbb{F}_\mathfrak{q}^\mathrm{al}/
\mathbb{F}_\mathfrak{q}, W_R(k))^\Delta } \hspace{2cm}\\
& &\map{\chi}
\bigoplus_{\mathfrak{q}\in \mathfrak{Q}} 
H^1(\mathbb{F}_\mathfrak{q}^\mathrm{al}/\mathbb{F}_\mathfrak{q}, W_R(k))^\Delta
\map{\pi_f}
\bigoplus_{\mathfrak{q}\in \mathfrak{Q}} 
H^1(\mathbb{F}_\mathfrak{q}^\mathrm{al}/\mathbb{F}_\mathfrak{q}, T)
\end{eqnarray*}
where arrow labeled $\chi$ takes  the element 
$(x_\sigma)_{\sigma\in\mathcal{S}}$ to 
$\sum_{\sigma\in\mathcal{S}}\chi(\sigma)x_\sigma$.
The first arrow is surjective for $s\gg 0$ 
by Lemma \ref{final reduction I},
the second is obviously surjective, and the third is surjective
by the fact that 
$\mathrm{Gal}(\mathbb{F}^\mathrm{al}_\mathfrak{q}/\mathbb{F}_\mathfrak{q})$ 
has cohomological dimension one.
\end{proof}

Let $\mathfrak{m}$ denote the maximal ideal of $R$ and 
set $\overline{T}=T\otimes_R R/\mathfrak{m}$.
If $q\nmid\ell N D$ is a rational prime whose 
absolute Frobenius acts as  complex conjugation on $K(\overline{T})$,
the extension of $\mathbb{Q}$ cut out by the Galois action on
$\overline{T}=T\otimes_R R/\mathfrak{m}$,
then clearly $q$ is inert in $K$ and the Frobenius of the unique
prime $\mathfrak{q}$ of $K$ above $q$ acts trivially on $\overline{T}$.
By the Chebetarov theorem we may choose $\mathfrak{Q}$ as large as we want
and containing only primes of this form.
For $s\gg 0$, Lemma \ref{final reduction II} gives a surjection from 
$\pi_\chi\pi_f\mathrm{Heeg}_{R,s}$ to
\begin{displaymath}
\bigoplus_{\mathfrak{q}\in\mathfrak{Q}} 
H^1(\mathbb{F}_\mathfrak{q}^\mathrm{al}/\mathbb{F}_\mathfrak{q},T)\otimes R/\mathfrak{m}
\cong H^1(\mathbb{F}_\mathfrak{q}^\mathrm{al}/\mathbb{F}_\mathfrak{q},\overline{T})
\cong \bigoplus_{\mathfrak{q}\in\mathfrak{Q}} 
\overline{T}/(\mathrm{Frob}_\mathfrak{q}-1)\overline{T}
\cong \bigoplus_{\mathfrak{q}\in\mathfrak{Q}}  \overline{T}.
\end{displaymath}
Thus the $R/\mathfrak{m}$ dimension of
$(\pi_\chi\pi_f\mathrm{Heeg}_{R,s})\otimes_R R/\mathfrak{m}$
is at least $\#\mathfrak{Q}$ for $s\gg 0$.  Enlarging 
$\mathfrak{Q}$, the $R/\mathfrak{m}$ dimension of
$(\pi_\chi\pi_f\mathrm{Heeg}_{R,s})\otimes_R R/\mathfrak{m}$
grows without bound as $s$ increases.

\begin{Lem}\label{finite torsion}
The $R$-torsion submodule of $H^1(\mathbb{Q}^\mathrm{al}/H[p^s],T)$
is finite and of bounded order as $s\to\infty$.
\end{Lem}

\begin{proof}
The $R$-torsion submodule of $H^1(\mathbb{Q}^\mathrm{al}/H[p^s],T)$ 
is isomorphic to the quotient of
\begin{equation}\label{an H0}
H^0(\mathbb{Q}^\mathrm{al}/H[p^s],T\otimes_{\mathbb{Z}_\ell}
(\mathbb{Q}_\ell/\mathbb{Z}_\ell))
\end{equation}
by its maximal divisible subgroup.  
Let $\ell\nmid pN$ be a rational prime which is inert in $K$, and
let $\lambda$ be the prime of $K$ above $\ell$.  Then $\lambda$
splits completely in $H[p^\infty]$ and, by Deligne's proof of the
Ramanujan conjecture, $\mathrm{Frob}_\lambda=\mathrm{Frob}_\ell^2$ 
acts on $W_f(k)$ with  eigenvalues of (complex) absolute value 
$\ell^{2k-1}$.  Hence $\mathrm{Frob}_\lambda-1$ is invertible on 
$W_f(k)\cong T\otimes_{\mathbb{Z}_\ell}\mathbb{Q}_\ell$. A 
snake lemma argument then shows that the order of (\ref{an H0})
is bounded by the order of $T/(\mathrm{Frob}_\lambda-1)T$.
\end{proof}

The $R$-torsion submodule of 
$\pi_\chi\pi_f\mathrm{Heeg}_{R,s}$ is contained in the torsion
submodule of $H^1(\mathbb{Q}^\mathrm{al}/H[p^s],T)$, and so is finite
and bounded as $s\to\infty$ by Lemma \ref{finite torsion}.
We have seen that the $R/\mathfrak{m}$ dimension
of $(\pi_\chi\pi_f\mathrm{Heeg}_{R,s})\otimes_R R/\mathfrak{m}$
increases without bound, and it now follows that the 
$R$-rank of $\pi_\chi\pi_f\mathrm{Heeg}_{R,s}$ also increases without bound.
This complete the proof of Theorem \ref{big money}.
\end{proof}

\bibliographystyle{plain}
%\bibliography{cycles.bib}

\end{document}